\newtheorem{Thm}{Theorem}
\newtheorem{Lemm}{Lemma}
\newtheorem{Propos}{Proposition}
\newtheorem{Coro}{Corollary}
\newtheorem{Assump}{Assumption}
\newtheorem{Def}{Definition}
\newtheorem{Remark}{Remark}
\newcommand{\Min}{{\mathop{\mathrm{minimize}}}}
\newcommand{\tabincell}[2]{\begin{tabular}{@{}#1@{}}#2\end{tabular}}
\begin{document}

%
\title{On Nonconvex Decentralized Gradient Descent}
%
%
%

\author{Jinshan~Zeng
        and~Wotao~Yin
\thanks{J. Zeng is with the College of Computer Information Engineering, Jiangxi Normal University, Nanchang, Jiangxi 330022, China (email: jsh.zeng@gmail.com)}
\thanks{W. Yin is with the Department of Mathematics, University of California, Los Angeles, CA 90095, USA (email: wotaoyin@ucla.edu).}
}

\maketitle

\begin{abstract}
Consensus optimization has received considerable attention in recent years. A number of decentralized algorithms have been proposed for {convex} consensus optimization. However, to the behaviors or consensus \emph{nonconvex} optimization, our understanding is more limited.

When we lose convexity, we cannot hope our algorithms always return global solutions though they sometimes still do.
Somewhat surprisingly, the decentralized consensus algorithms, DGD and Prox-DGD, retain most other properties that are known in the convex setting. In particular, when diminishing (or constant) step sizes are used, we can prove convergence to a (or a neighborhood of) consensus stationary solution under some regular assumptions.
It is worth noting that Prox-DGD can handle nonconvex nonsmooth functions if their proximal operators can be computed. Such functions include SCAD, MCP and $\ell_q$ quasi-norms, $q\in[0,1)$. Similarly, Prox-DGD can take the constraint to a nonconvex set with an easy projection.

To establish these properties, we have to introduce a completely different line of analysis, as well as modify existing proofs that were used in the convex setting.

%
\end{abstract}

\begin{IEEEkeywords}
Nonconvex dencentralized computing, consensus optimization, decentralized gradient descent method, proximal decentralized gradient descent
\end{IEEEkeywords}

%
\IEEEpeerreviewmaketitle

\section{Introduction}
%
%
%
%
\IEEEPARstart{W}{e} consider an undirected, connected network of $n$ agents and the following consensus optimization problem defined on the network:
\begin{equation}
\label{Eq:multi-agentOPT}
\mathop{\Min}_{x\in \mathbb{R}^p} f(x) \triangleq \sum_{i=1}^n f_i(x),
\end{equation}
where $f_i$ is a differentiable function only known to the agent $i$. We also consider the consensus optimization problem in the following differentiable+proximable\footnote{{We call a function proximable if its \emph{proximal operator} $\mathrm{prox}_{\alpha f}(y)\triangleq \mathop{\mathrm{argmin}}_{x} \left\{\alpha f(x)+\tfrac{1}{2}\|x-y\|^2 \right\}$ is easy to compute.}}  form:
\begin{align}
\label{Eq:multi-agentCompOPT}
\mathop{\Min}_{x\in \mathbb{R}^p}\  s(x) \triangleq \sum_{i=1}^n (f_i(x)+r_i(x)),
\end{align}
where $f_i, r_i$ are differentiable and proximable functions, respectively,  only known to the agent $i$. Each function $r_i$ is  possibly non-differentiable or nonconvex, or both.

The models (\ref{Eq:multi-agentOPT}) and \eqref{Eq:multi-agentCompOPT} find applications in decentralized averaging, learning, estimation, and control. Some typical applications include:
(i) the distributed compressed sensing problems \cite{Duarte-DCS2005,Ling-DCS2010,Mateos-DistSparLinearReg2010,Eldar2014,Ravazzi-DIHT2015};
(ii) distributed consensus \cite{Zhu2013}, \cite{Chang-InexactADMM2015}, \cite{Lee-DistRandProj2013,Leung2013,Scutari2014,Wai-Regression2015};
(iii) distributed and parallel machine learning \cite{Scutari2014,Scutari-Bigdata2015,Hong-Prox-PDA2017,Liu-D-PSGD2017,Omidshafiei-RL2017}.
More specifically, in these applications,
each $f_i$ can be: 1) the data-fidelity term (possibly nonconvex) in statistical learning and machine learning \cite{Scutari-Bigdata2015,Wai2016a};
2) nonconvex utility functions used in applications such as resource allocation \cite{Bjornson-RS2012,Hong-RS2013};
3) empirical risk of deep neural networks with nonlinear activation functions \cite{Hinton-DL2015}.
The proximal function $r_i$ can be taken as: 1) convex penalties such as nonsmooth $\ell_1$-norm or smooth $\ell_2$-norm;
2) the indicator function for a closed convex set (or a nonconvex set with an easy projection) \cite{Bianchi2013},  that is, $r_i(x)=0$ if $x$ satisfies the constraint and $\infty$ otherwise;
3) nonconvex penalties such as $\ell_q$ quasi-norm ($0\leq q<1$) \cite{Ravazzi-DIHT2015,Chen-Lowerbound-Lq}, smoothly clipped absolute deviation (SCAD) penalty \cite{Fan-SCAD2001} and the minimax concave penalty (MCP) \cite{Zhang-MCP2010}.

When $f_i$'s are convex, the existing algorithms include the (sub)gradient methods \cite{Chen2012a,Chen2012b,Jakovetic-FastGradient2014,Matei-Subgradient2011,Nedic-Subgradient2009,Tsitsiklis1986,Yin-DGD2013,Qu-Li2016},
and the primal-dual domain methods such as the decentralized alternating direction method of multipliers (DADMM) \cite{Schizas-DADMM2008,Shi-DADMM2014,Chang-InexactADMM2015}, DLM \cite{Ling-DLM2015}, and EXTRA \cite{Yin-EXTRA2015,Shi-PGEXTRA2015}.
When $f_i$'s are nonconvex,
some existing results include \cite{Bianchi2013,Bianchi2013b,Hong2016,Lorenzo2016a,Lorenzo2016b,Tatarenko2016conf,Tatarenko2016arXiv,Wai2016b,Wai2015,Wai2016a,Zhu2013}. In spite of the algorithms and their analysis in these works, the convergence of the simple algorithm Decentralized Gradient Descent (DGD) \cite{Nedic-Subgradient2009} under nonconvex $f_i$'s is still unknown. Furthermore, although DGD is slower than DADMM, DLM and EXTRA on convex problems, DGD is simpler and thus easier to extend to a variety of settings such as \cite{Raginsky11,Yan-13,McMahan-14,Hosseini16}, where online processing and delay tolerance are considered. Therefore, we expect our results to motivate future adoptions of nonconvex DGD.

This paper studies the convergence of two algorithms: DGD for solving problem \eqref{Eq:multi-agentOPT} and Prox-DGD for problem \eqref{Eq:multi-agentCompOPT}. In each DGD iteration, every agent locally computes a gradient and then updates its variable by combining the average of its neighbors' with the negative gradient step. In each Prox-DGD iteration, every agent locally computes a gradient of $f_i$ and a proximal map of $r_i$, as well as exchanges information with its neighbors. Both algorithms can use either a fixed step size or a sequence of decreasing step sizes.

When the problem is convex and a fixed step size is used, DGD does not converge to a solution of the original problem \eqref{Eq:multi-agentOPT} but a point in its neighborhood
\cite{Yin-DGD2013}. This motivates the use of decreasing step sizes such as in \cite{Chen2012b,Jakovetic-FastGradient2014}.
Assuming $f_i$'s are convex and have Lipschitz continuous and bounded gradients, \cite{Chen2012b} shows that decreasing step sizes $\alpha_k = \frac{1}{\sqrt{k}}$ lead to a convergence rate ${\cal O}(\frac{\ln k}{k})$ of the running best of objective errors. 
\cite{Jakovetic-FastGradient2014} uses nested loops and shows an outer-loop convergence rate ${\cal O}(\frac{1}{k^2})$ of objective errors, utilizing Nesterov's acceleration, provided that the inner loop performs substantial consensus computation. Without a substantial inner loop, their single-loop algorithm using the decreasing step sizes $\alpha_k = \frac{1}{k^{1/3}}$ has a reduced rate ${\cal O}(\frac{\ln k}{k})$.

The objective of this paper is two-fold: (a) we aim to show, other than losing global optimality, most existing convergence results of DGD and Prox-DGD that are known in the convex setting remain valid in the nonconvex setting, and (b) to achieve (a), we
illustrate how to tailor nonconvex analysis tools for decentralized optimization. In particular, our asymptotic exact and inexact consensus results require new treatments because they are special to decentralized algorithms.

The analytic results of this paper can be summarized as follows.
\begin{enumerate}
\item[(a)]
When a fixed step size $\alpha$ is used and properly bounded, the DGD iterates converge to a stationary point of a Lyapunov function. The difference between each local estimate of $x$ and the global average of all local estimates is bounded, and the bound is proportional to $\alpha$.

\item[(b)]
When a decreasing step size $\alpha_k = {\cal O}(1/(k+1)^{\epsilon})$ is used, where  $0<\epsilon\le 1$ and $k$ is the iteration number, the objective sequence converges, and the iterates of DGD are asymptotically consensual (i.e., become equal one another), and they achieve this at the rate of ${\cal O}(1/(k+1)^{\epsilon})$. 
Moreover, we show the convergence of DGD to a stationary point of the original problem, and derive the convergence rates of DGD with different $\epsilon$ for objective functions that are convex.

\item[(c)]
The convergence analysis of DGD can be extended to the algorithm Prox-DGD for solving problem \eqref{Eq:multi-agentCompOPT}. However, when the proximable functions $r_i$'s are nonconvex, the mixing matrix is required to be positive definite and a smaller step size is also required. (Otherwise, the mixing matrix can be non-definite.)
\end{enumerate}
The detailed comparisons between our results and the existing results on DGD and Prox-DGD are presented in Tables \ref{Tab:comp_DGD} and \ref{Tab:comp_prox-DGD}. The global objective error rate in these two tables refers to the rate of $\{f(\bar{x}^k) - f(x_{\mathrm{opt}})\}$ or $\{s(\bar{x}^k) - s(x_{\mathrm{opt}})\}$, where $\bar{x}^k = \frac{1}{n}\sum_{i=1}^n {\bf x}_{(i)}^k$ is the average of the $k$th iterate and $x_{\mathrm{opt}}$ is a global solution. The comparisons beyond DGD and Prox-DGD are presented in Section \ref{sc:relatedwork} and Table \ref{tab:main_results}.

New proof techniques are introduced in this paper, particularly, in the analysis of convergence of DGD and Prox-DGD with decreasing step sizes. Specifically, the convergence of objective sequence and convergence to a stationary point of the original problem with decreasing step sizes are justified via taking a Lyapunov function and several new lemmas (cf. Lemmas \ref{Lemm:supermartingale}, \ref{Lemm:betak}, and the proof of Theorem \ref{Thm:Lalphak}). Moreover, we estimate the consensus rate by introducing an auxiliary sequence and then showing both sequences have the same rates (cf. the proof of Proposition \ref{Propos:asympconsensus}). All these proof techniques are new and distinguish our paper from the existing works such as \cite{Chen2012b,Jakovetic-FastGradient2014,Nedic-Subgradient2009,Bianchi2013,Hong2016,Lorenzo2016a,Tatarenko2016arXiv,Wai2016a}.
It should be mentioned that during the revision of this paper, we found some recent, related but independent work on the convergence of nonconvex decentralized algorithms including \cite{Liu-D-PSGD2017,Hong-Prox-PDA2017,Hong-PP-PDA2017,Hong-zeroth-order2017}.
We will give detailed comparisons with these work latter.


The rest of this paper is organized as follows. Section \ref{sc:algorithm} describes the problem setup and reviews the algorithms. Section \ref{sc:result} presents our assumptions and main results. Section \ref{sc:relatedwork} discusses related works. Section \ref{sc:experiment} shows some numerical experiments to verify the developed theoretical results.
Section \ref{sc:proof} presents the proofs of our main results. We conclude this paper in Section \ref{sc:conclusion}. 

\textbf{Notation:} Let $I$ denote the identity matrix of the size $n\times n$, and ${\bf 1} \in \mathbb{R}^{n}$ denote the vector of all $1$'s.
For the matrix $X$, $X^T$ denotes its transpose, $X_{ij}$ denotes its $(i,j)$th component, and $\|X\| \triangleq \sqrt{\langle X, X \rangle}=\sqrt{\sum_{i,j}X_{ij}^2}$ is its Frobenius norm, which simplifies to the\ Euclidean norm when $X$ is a vector.
Given a symmetric, positive semidefinite matrix $G\in\mathbf{R}^{n\times n}$, we let $\|X\|_G^2 \triangleq \langle X, GX \rangle$ be the induced semi-norm. Given a function $h$, $\mathrm{dom}(f)$ denotes its domain.

\begin{table*}[ht!]
\tabulinesep=1.5mm
\centering
\caption{Comparisons on different algorithms for consensus smooth optimization problem \eqref{Eq:multi-agentOPT}}
\label{tab:main_results}
\begin{tabu}{|l|c|c|c|c|}\hline
& \multicolumn{2}{c|}{Fixed step size}          & \multicolumn{2}{c|}{Decreasing step sizes}                                               \\ \hline

algorithm & \multicolumn{1}{c|}{DGD \cite{Yin-DGD2013}} & DGD (this paper)  & \multicolumn{1}{c|}{D-NG \cite{Jakovetic-FastGradient2014}} & DGD (this paper)            \\ \hline

$f_i$ & convex only & (non)convex & convex only   & (non)convex
\\ \hline

$\nabla f_i$ & \multicolumn{2}{c|}{Lipschitz} & \multicolumn{2}{c|}{Lipschitz, bounded}
\\ \hline

step size & \multicolumn{2}{c|}{$ 0<\alpha< \frac{1+\lambda_n(W)}{L_f}$} & \multicolumn{1}{c|}{\tabincell{c}{${\cal O}(\frac{1}{k})$\\with Nesterov acc.}}  & \tabincell{c}{${\cal O}(\frac{1}{k^{\epsilon}})$\\[5pt] $\epsilon \in(0, 1]$
 }            \\ \hline

consensus & \multicolumn{2}{c|}{error ${\cal O}(\alpha)$}& \multicolumn{1}{c|}{${\cal O}(\frac{1}{k})$} & ${\cal O}(\frac{1}{k^{\epsilon}})$             \\ \hline

$\min_{j\le k}\|{\bf x}^{j+1}-{\bf x}^j\|^2$
& \multicolumn{2}{c|}{$o(\frac{1}{k})$}
& no rate & $o(\frac{1}{k^{1+\epsilon}})$             \\\hline

global objective error & \tabincell{l}{${\cal O}(\frac{1}{k})$ until error \\ ${\cal O}(\frac{\alpha}{1-\zeta})$} & \tabincell{l}{Convex: ${\cal O}(\frac{1}{k})$ until \\ error ${\cal O}(\frac{\alpha}{1-\zeta})$;\\ Nonconvex: no rate}   & \tabincell{l}{${\cal O}(\frac{\ln k}{k})$}  & \tabincell{l}{Convex$^\flat$: ${\cal O}(\frac{\ln k}{\sqrt{k}}) (\epsilon=1/2)$, \\ ${\cal O}(\frac{1}{\ln k}) (\epsilon=1)$,\\${\cal O} (\frac{1}{k^{\min\{\epsilon,1-\epsilon\}}})$(other $\epsilon$); \\ Nonconvex: no rate} \\\hline
\end{tabu}\\
$^\flat$The objective error rates of DGD and Prox-DGD obtained in this paper and those in convex DProx-Grad \cite{Chen2012b} are ergodic or running best rates.
\label{Tab:comp_DGD}
\end{table*}

\begin{table*}[ht!]
\tabulinesep=1.5mm
\centering
\caption{Comparisons on different algorithms for consensus composite optimization problem \eqref{Eq:multi-agentCompOPT}}
\label{tab:main_results}
\begin{tabu}{|l|c|c|c|c|c|}\hline
& \multicolumn{3}{c|}{Fixed step size}          & \multicolumn{2}{c|}{Decreasing step sizes}                                               \\ \hline
algorithm & \multicolumn{1}{c|}{AccDProx-Grad \cite{Chen2012a}} &\multicolumn{1}{c|}{DProx-Grad \cite{Chen2012b}} &Prox-DGD (this paper)  & \multicolumn{1}{c|}{DProx-Grad \cite{Chen2012b}} & Prox-DGD (this paper)             \\ \hline

$f_i, r_i$ & \multicolumn{2}{c|}{convex only} & (non)convex & convex only  & (non)convex
\\ \hline

$\nabla f_i$ & \multicolumn{2}{c|}{Lipschitz, bounded} & Lipschitz & \multicolumn{2}{c|}{Lipschitz, bounded}
\\ \hline

$\partial r_i$ & \multicolumn{2}{c|}{bounded} & -- & \multicolumn{2}{c|}{bounded}
\\ \hline

step size & \multicolumn{2}{c|}{$ 0<\alpha< \frac{1}{L_f}$} & \multicolumn{1}{c|}{\tabincell{l}{$0<\alpha < \frac{1+\lambda_n(W)}{L_f}$\\ (convex $r_i$); \\ $0<\alpha < \frac{\lambda_n(W)}{L_f}$\\ (nonconvex $r_i$, $\lambda_n(W)>0$)}} & \multicolumn{1}{c|}{${\cal O}(\frac{1}{(k+1)^{1/2}})$} & \tabincell{l}{${\cal O}(\frac{1}{(k+1)^{\epsilon}})$\\[5pt] $\epsilon\in(0,1]$}            \\ \hline

consensus & \multicolumn{1}{c|}{${\cal O}(\gamma^k k^2), 0<\gamma<1$} & \multicolumn{2}{c|}{error ${\cal O}(\alpha)$} & \multicolumn{1}{c|}{${\cal O}(\frac{1}{k^{1/2}})$} & ${\cal O}(\frac{1}{k^{\epsilon}})$             \\ \hline

$\min_{j\le k}\|{\bf x}^{j+1}-{\bf x}^j\|^2$
& \multicolumn{1}{c|}{no rate}
& no rate & $o(\frac{1}{k})$ & no rate & $o(\frac{1}{k^{1+\epsilon}})$             \\\hline

global objective error & \multicolumn{1}{c|}{${\cal O}(\frac{1}{k})$} & \multicolumn{1}{c|}{\tabincell{l}{Form \\${\frac{D_1}{\alpha}+D_2\alpha},$\\ $D_1,D_2>0$}}  & \tabincell{l}{Convex:\\ form ${\frac{D_3}{\alpha}+D_4\alpha},$\\ $D_3,D_4>0$; \\ Nonconvex: no rate} & ${\cal O}(\frac{\ln k}{k})^{\dag}$ & \tabincell{l}{Convex$^\dag$: ${\cal O}(\frac{\ln k}{\sqrt{k}}) (\epsilon=1/2)$, \\ ${\cal O}(\frac{1}{\ln k}) (\epsilon=1)$,\\${\cal O}(\frac{1}{k^{\min\{\epsilon,1-\epsilon\}}})$(other $\epsilon$), \\Nonconvex: no rate} \\\hline
\end{tabu}
\label{Tab:comp_prox-DGD}
\end{table*}

\begin{table*}[ht!]
\tabulinesep=1.5mm
\centering
\caption{Comparisons on scenarios applied for different nonconvex decentralized algorithms$^\natural$}
\label{tab:main_results}
\begin{tabu}{|l|c|c|c|c|c|c|c|c|c|c|c|}
\hline
& \multicolumn{1}{c|}{$f_i$}  & \multicolumn{2}{c|}{nonsmooth $r_i$} & \multicolumn{2}{c|}{step size} & \multicolumn{2}{c|}{network ($W$)} & \multicolumn{2}{c|}{algorithm type} & \multicolumn{2}{c|}{fusion scheme}
\\ \hline
algorithm & smooth &\multicolumn{1}{c|}{cvx} &ncvx  & \multicolumn{1}{c|}{fixed} & diminish & \multicolumn{1}{c|}{static} & dynamic & \multicolumn{1}{c|}{determin} & stochastic   & \multicolumn{1}{c|}{ATC} & CTA
\\ \hline
DGD (this paper) & $\surd$ &\multicolumn{1}{c|}{} &  & \multicolumn{1}{c|}{$\surd$} &$\surd$ & \multicolumn{1}{c|}{$\surd$ (doubly)} & $--$ & \multicolumn{1}{c|}{$\surd$} & $--$   & \multicolumn{1}{c|}{$--$} & $\surd$
\\ \hline
Perturbed Push-sum \cite{Tatarenko2016arXiv} & $\surd$ &\multicolumn{1}{c|}{} &  & \multicolumn{1}{c|}{$--$} &$\surd$ & \multicolumn{1}{c|}{$--$} & $\surd$ (column) & \multicolumn{1}{c|}{$\surd$} & $\surd$   & \multicolumn{1}{c|}{$--$} & $\surd$
\\ \hline
ZENITH \cite{Hong2016}& $\surd$ &\multicolumn{1}{c|}{} & & \multicolumn{1}{c|}{$\surd$} & $--$ & \multicolumn{1}{c|}{$\surd$ (doubly)} & $--$ & \multicolumn{1}{c|}{$\surd$} & $--$   & \multicolumn{1}{c|}{$--$} & $\surd$
\\ \hline \hline
Prox-DGD (this paper) & $\surd$ &\multicolumn{1}{c|}{$\surd$} &$\surd$  & \multicolumn{1}{c|}{$\surd$} &$\surd$ & \multicolumn{1}{c|}{$\surd$ (doubly)} &$--$    & \multicolumn{1}{c|}{$\surd$} & $--$   & \multicolumn{1}{c|}{$--$} & $\surd$
\\ \hline
NEXT \cite{Lorenzo2016a}& $\surd$ &\multicolumn{1}{c|}{$\surd$} &$--$  & \multicolumn{1}{c|}{$--$} & $\surd$ & \multicolumn{1}{c|}{$--$} & $\surd$ (doubly) & \multicolumn{1}{c|}{$\surd$} & $--$   & \multicolumn{1}{c|}{$\surd$} & $--$
\\ \hline
DeFW \cite{Wai2016a} &$\surd$ &\multicolumn{1}{c|}{$\surd$} &$--$  & \multicolumn{1}{c|}{$--$} & $\surd$ & \multicolumn{1}{c|}{$\surd$ (doubly)} & $--$ & \multicolumn{1}{c|}{$\surd$} &$--$   & \multicolumn{1}{c|}{$\surd$} &$--$
\\ \hline
Proj SGD \cite{Bianchi2013}&$\surd$ &\multicolumn{1}{c|}{$\surd$} &$--$  & \multicolumn{1}{c|}{$--$} &$\surd$ & \multicolumn{1}{c|}{$--$} &$\surd$ (row) & \multicolumn{1}{c|}{$--$} & $\surd$   & \multicolumn{1}{c|}{$\surd$} &$--$
\\ \hline
\end{tabu}
\leftline{$^\natural$ In this table, the full names of these abbreviations are list as follows: cvx (convex), ncvx (nonconvex), diminish (diminishing), determin (deterministic),}
\leftline{ATC (adaptive-then-combine), CTA (combine-then-adaptive), doubly (doubly stochastic), column (column stochastic), row (row stochastic), where vocabularies}
\leftline{in the brackets are the full names. A row, or column, or double stochastic $W$ means that: $W{\bf 1}={\bf 1}$, or $W^T{\bf 1} = {\bf 1}$, or both hold.}
\label{Tab:comp_scenario}
\end{table*}

\section{Problem setup and algorithm review}
\label{sc:algorithm}
Consider a connected \emph{undirected} network ${\cal G} = \{{\cal V},{\cal E}\}$, where ${\cal V}$ is a set of $n$ nodes and ${\cal E}$ is the edge set.
Any edge $(i,j)\in {\cal E}$ represents a communication link between nodes $i$ and  $j$. Let $x_{(i)} \in \mathbb{R}^p$ denote the \emph{local copy} of $x$ at node $i$. 
We reformulate the consensus problem \eqref{Eq:multi-agentOPT} into the \textbf{equivalent problem}: 
\begin{align}
&\mathop{\Min}_{\mathbf{x}}~\  {\bf 1}^T {\bf f(x)} \triangleq \sum_{i=1}^n f_i({\bf x}_{(i)}),\label{Eq:consensusProblem}\\
&\mathrm{subject\ to}~\ {\bf x}_{(i)} = {\bf x}_{(j)},~ \forall (i,j)\in {\cal E}, \nonumber
\end{align}
where ${\bf x}\in \mathbb{R}^{n\times p}$, ${\bf f(x)} \in \mathbb{R}^{n}$ with
$$
{\bf x} \triangleq \left(
\begin{array}{ccc}
\mbox{---} &{\bf x}^T_{(1)} & \mbox{---}\\
\mbox{---} &{\bf x}^T_{(2)} & \mbox{---}\\
  &\vdots &  \\
\mbox{---} &{\bf x}^T_{(n)} & \mbox{---}\\
\end{array}
\right),
~\
{\bf f(x)} \triangleq \left(
\begin{array}{c}
f_1({\bf x}_{(1)})\\
f_2({\bf x}_{(2)})\\
\vdots   \\
f_n({\bf x}_{(n)})\\
\end{array}
\right)
.
$$
In addition, the gradient of ${\bf f}(\bf x)$ is
\begin{align}
\label{Eq:grad-f}
{\bf \nabla f(x)} \triangleq \left(
\begin{array}{ccc}
\mbox{---} &\nabla f_1({\bf x}_{(1)})^T & \mbox{---}\\
\mbox{---} &\nabla f_2({\bf x}_{(2)})^T & \mbox{---}\\
  &\vdots &  \\
\mbox{---} &\nabla f_n({\bf x}_{(n)})^T & \mbox{---}\\
\end{array}
\right)
\in \mathbb{R}^{n\times p}.
\end{align}
The $i$th rows of the  matrices $\mathbf{x}$ and $\nabla \mathbf{f}(\mathbf{x})$,  and vector
${\bf f(x)}$, correspond to agent $i$. The analysis in this paper applies to any integer $p \ge 1$. \textbf{For simplicity, one can let $p=1$ and treat $\mathbf{x}$ and $\nabla \mathbf{f}(\mathbf{x})$ as vectors (rather than matrices).}


The \textbf{algorithm DGD} \cite{Nedic-Subgradient2009} for \eqref{Eq:consensusProblem} is described as follows:
\begin{quote}
Pick an arbitrary ${\bf x}^0$. For $k=0,1,\ldots,$ compute
\begin{align}
\label{Eq:DGD}
{\bf x}^{k+1} \gets W{\bf x}^k - \alpha_k \nabla {\bf f}({\bf x}^k),
\end{align}
where $W$ is a mixing matrix and $\alpha_k>0$ is a step-size parameter.
\end{quote}

Similarly, we can reformulate the composite problem \eqref{Eq:multi-agentCompOPT} as the following equivalent form:
\begin{align}
\label{Eq:CompositeProblem}
&\mathop{\Min}_{\mathbf{x}}\  \sum_{i=1}^n (f_i({\bf x}_{(i)})+r_i({\bf x}_{(i)})),\nonumber\\
&\mathrm{subject\  to}\ {\bf x}_{(i)} = {\bf x}_{(j)}, \ \forall (i,j)\in {\cal E}.
\end{align}
Let $r({\bf x}) \triangleq \sum_{i=1}^n r_i({\bf x}_{(i)}).$
The algorithm Prox-DGD  can be applied to the above problem \eqref{Eq:CompositeProblem}: \begin{quote}
\textbf{Prox-DGD:} Take an arbitrary ${\bf x}^0$. For $k=0,1,\ldots,$ perform
\begin{align}
\label{Eq:proximalDGD}
{\bf x}^{k+1} \gets \mathrm{prox}_{\alpha_k r}(W{\bf x}^k - \alpha_k \nabla {\bf f}({\bf x}^k)),
\end{align}
where the proximal operator is
\begin{equation}\label{eq:prox_def}
\mathrm{prox}_{\alpha_k r}({\bf x}) \triangleq \mathop{\mathrm{argmin}}_{{\bf u}\in \mathbb{R}^{n\times p}} \left\{\alpha_k r({\bf u})+\frac{\|{\bf u}-{\bf x}\|^2}{2}\right\}.
\end{equation}
\end{quote}

\section{Assumptions and main results}
\label{sc:result}
This section presents all of our main results.
\subsection{Definitions and assumptions}
\begin{Def}[Lipschitz differentiability]
\label{Def:LipDiff}
A function $h$ is called Lipschitz differentiable if $h$ is differentiable and its gradient $\nabla h$ is Lipschitz continuous, i.e., $\|\nabla h(u)-\nabla h(v)\| \leq L\|u-v\|, \forall u,v \in \mathrm{dom}(h),$ where $L>0$ is its Lipschitz constant.
\end{Def}

\begin{Def}[Coercivity]
\label{Def:coercive}
A function $h$ is called coercive if $\|u\| \rightarrow +\infty$ implies $h(u) \rightarrow +\infty$.
\end{Def}

{The next definition is a property that many functions have (see \cite[Section 2.2]{Xu-Yin2013} for examples) and can help obtain whole sequence convergence\footnote{Whole sequence convergence from any starting point is referred to as ``global convergence'' in the literature. Its limit is not necessarily a global solution.} from subsequence convergence.
}
\begin{Def}[Kurdyka-{\L}ojasiewicz (K{\L}) property \cite{lojasiewicz1993geometrie,bolte2007lojasiewicz,Attouch2013}]
\label{Def:KLProp}
A function $h:\mathbb{R}^p \rightarrow \mathbb{R}\cup \{+\infty\}$ has the K{\L} property at $x^*\in \mathrm{dom}(\partial h)$ if there exist $\eta \in (0,+\infty]$, a neighborhood $U$ of $x^*$, and a continuous concave function $\varphi:[0,\eta)\rightarrow \mathbb{R}_{+}$ such that:
    \begin{enumerate}
    \item[(i)]
    $\varphi(0) = 0$ and $\varphi$ is differentiable on $(0,\eta)$;

    \item[(ii)]
    for all $s\in (0,\eta)$, $\varphi'(s)>0$;

    \item[(iii)]
    for all $x$ in $U\cap \{x: h(x^*)<h(x)<h(x^*) + \eta\}$,
    the K{\L} inequality holds
    \begin{equation}
    \varphi'\big(h(x)-h(x^*)\big)\cdot \mathrm{dist}\big(0,\partial h(x)\big) \geq 1.
    \label{KLIneq}
    \end{equation}
    \end{enumerate}
Proper lower semi-continuous functions that satisfy the K{\L} inequality at each point of $\mathrm{dom}(\partial h)$ are called K{\L} functions.
\end{Def}


\begin{Assump}[Objective]
\label{Assump:objective}
The objective functions $f_i:\mathbb{R}^p \rightarrow \mathbb{R}\cup \{+\infty\}$, $i=1,\ldots,n$, satisfy the following:
\begin{enumerate}
\item[(1)]
$f_i$ is Lipschitz differentiable with constant $L_{f_i}>0$.

\item[(2)]
 $f_i$ is proper (i.e., not everywhere infinite) and coercive. 
\end{enumerate}
\end{Assump}
The sum $\sum_{i=1}^n f_i({\bf x}_{(i)})$ is $L_f$-Lipschitz differentiable with $L_f \triangleq \max_i L_{f_i}$
(this can be easily verified via the definition of $\nabla {\bf f}({\bf x})$ as shown in \eqref{Eq:grad-f}).
In addition, each $f_i$ is lower bounded following Part (2) of the above assumption.

\begin{Assump}[Mixing matrix]
\label{Assump:MixMat}
The mixing matrix $W = [w_{ij}] \in \mathbb{R}^{n\times n}$ has the following properties:
\begin{enumerate}
\item[(1)]
(Graph) If $i\neq j$ and $(i,j) \notin {\cal E}$, then $w_{ij} =0$, otherwise, $w_{ij} >0$.

\item[(2)]
(Symmetry) $W = W^T$.

\item[(3)]
(Null space property) $\mathrm{null} \{I-W\} = \mathrm{span}\{\bf 1\}$.

\item[(4)]
(Spectral property) $I \succeq W \succ -I.$
\end{enumerate}
\end{Assump}
By Assumption \ref{Assump:MixMat}, a  solution ${\bf x}_{\mathrm{opt}}$ to problem \eqref{Eq:consensusProblem} satisfies $(I-W){\bf x}_{\mathrm{opt}} =0.$ Due to the symmetric assumption of $W$, its eigenvalues are real and can be sorted in the nonincreasing order.
Let  $\lambda_i(W)$ denote the $i$th largest eigenvalue of $W$. Then by Assumption \ref{Assump:MixMat}, 
\[\lambda_1(W) =1> \lambda_2(W) \geq \cdots \geq \lambda_n(W)>-1.\]
Let $\zeta$ be the second largest magnitude eigenvalue of $W$. Then
\begin{align}
\label{Eq:zeta}
\zeta = \max\{|\lambda_2(W)|, |\lambda_n(W)|\}.
\end{align}

\subsection{Convergence results of DGD}
We consider the convergence of DGD with both a fixed step size and a sequence of decreasing step sizes.

\subsubsection{Convergence results of DGD with a fixed step size}
The  convergence result of DGD with a fixed step size (i.e., $\alpha_k \equiv \alpha$) is established based on the Lyapunov function \cite{Yin-DGD2013}:
\begin{align}
\label{Def:L-alpha}
{\cal L}_{\alpha}({\bf x}) \triangleq {\bf 1}^T{\bf f}({\bf x}) + \frac{1}{2\alpha} \|{\bf x}\|_{I-W}^2.
\end{align}
It is worth reminding that convexity is \emph{not} assumed.
\begin{Thm}[Global convergence]
\label{Thm:Globalconverg}
Let $\{{\bf x}^k\}$ be the sequence generated by DGD \eqref{Eq:DGD} with the step size $0<\alpha<\frac{1+\lambda_n(W)}{L_f}$.
Let Assumptions \ref{Assump:objective} and \ref{Assump:MixMat} hold. Then  $\{{\bf x}^k\}$  has at least one accumulation point ${\bf x}^*$, and any such point is a stationary point of ${\cal L}_{\alpha}({\bf x})$.
Furthermore, the running best rates\footnote{Given a nonnegative sequence ${a_k}$, its running best sequence is $b_k=\min\{a_i : i\le k\}$. We say ${a_k}$ has a running best rate of $o(1/k)$ if $b_k=o(1/k)$.} of the sequences\footnote{These quantities naturally appear in the analysis, so we keep the squares.} $\{\|{\bf x}^{k+1} - {\bf x}^k\|^2\}$, and $\{\|\nabla {\cal L}_{\alpha}({\bf x}^k)\|^2\}$, and
$\{\|\frac{1}{n}{\bf 1}^T\nabla {\bf f}({\bf x}^k)\|^2\}$ are  $o(\frac{1}{{k}})$.
The convergence rate of the sequence $\{\frac{1}{K} \sum_{k=0}^{K-1} \|\frac{1}{n}{\bf 1}^T \nabla {\bf f}({\bf x}^k)\|^2\}$ is
${\cal O}(\frac{1}{K})$.

In addition, if ${\cal L}_{\alpha}$ satisfies the K{\L} property at an accumulation point ${\bf x}^*$, then $\{{\bf x}^k\}$ globally converges to ${\bf x}^*$.
\end{Thm}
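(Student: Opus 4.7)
The key observation is that DGD is exactly a gradient descent step on $\mathcal{L}_\alpha$: since
\[
\nabla \mathcal{L}_\alpha(\mathbf{x}) = \nabla \mathbf{f}(\mathbf{x}) + \tfrac{1}{\alpha}(I-W)\mathbf{x},
\]
the update \eqref{Eq:DGD} rewrites as $\mathbf{x}^{k+1} = \mathbf{x}^k - \alpha \nabla \mathcal{L}_\alpha(\mathbf{x}^k)$. By Assumption \ref{Assump:MixMat}(4), $I-W \succeq 0$ with operator norm $1-\lambda_n(W)$, so $\nabla \mathcal{L}_\alpha$ is Lipschitz with constant $L \triangleq L_f + \tfrac{1-\lambda_n(W)}{\alpha}$. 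A short algebraic check shows the hypothesis $\alpha < \tfrac{1+\lambda_n(W)}{L_f}$ is exactly $\alpha L < 2$, the classical descent-lemma regime. This recognition is the structural heart of the proof and immediately suggests the right Lyapunov analysis.

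Applying the descent lemma to $\mathcal{L}_\alpha$ at $\mathbf{x}^k$ with step $-\alpha \nabla \mathcal{L}_\alpha(\mathbf{x}^k)$ gives
\[
\mathcal{L}_\alpha(\mathbf{x}^{k+1}) \leq \mathcal{L}_\alpha(\mathbf{x}^k) - c\|\mathbf{x}^{k+1}-\mathbf{x}^k\|^2,\qquad c = \tfrac{1}{\alpha}-\tfrac{L}{2}>0.
\]
Coercivity of each $f_i$ makes $\mathbf{1}^T\mathbf{f}$ coercive, and since $\|\cdot\|_{I-W}^2\geq 0$, $\mathcal{L}_\alpha$ is coercive as well; monotone decrease then traps $\{\mathbf{x}^k\}$ in a compact sublevel set, so accumulation points exist. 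Telescoping the descent inequality yields $\sum_k \|\mathbf{x}^{k+1}-\mathbf{x}^k\|^2 < \infty$, hence $\mathbf{x}^{k+1}-\mathbf{x}^k \to 0$ and $\nabla \mathcal{L}_\alpha(\mathbf{x}^k) = -\tfrac{1}{\alpha}(\mathbf{x}^{k+1}-\mathbf{x}^k) \to 0$. Continuity of $\nabla \mathcal{L}_\alpha$ then forces every accumulation point to be a stationary point of $\mathcal{L}_\alpha$.

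For the rates, summability of $a_k := \|\mathbf{x}^{k+1}-\mathbf{x}^k\|^2$ combined with the standard bound $k\min_{j\leq k} a_j \leq 2\sum_{j=\lceil k/2\rceil}^k a_j \to 0$ gives $\min_{j\leq k} a_j = o(1/k)$; the same rate transfers to $\|\nabla \mathcal{L}_\alpha(\mathbf{x}^k)\|^2$ via the identity above. Because $W$ is symmetric with $W\mathbf{1}=\mathbf{1}$, we have $\mathbf{1}^T(I-W)=0$, so $\mathbf{1}^T\nabla \mathbf{f}(\mathbf{x}) = \mathbf{1}^T \nabla \mathcal{L}_\alpha(\mathbf{x})$, and Cauchy--Schwarz yields $\|\tfrac{1}{n}\mathbf{1}^T \nabla \mathbf{f}(\mathbf{x}^k)\|^2 \leq \tfrac{1}{n}\|\nabla \mathcal{L}_\alpha(\mathbf{x}^k)\|^2$, carrying both the $o(1/k)$ running-best and, after summing the descent inequality, the $\mathcal{O}(1/K)$ ergodic rate.

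For whole-sequence convergence under K{\L}, I would invoke the Attouch--Bolte template. The three required ingredients are already in hand: sufficient decrease from the descent inequality, the relative error bound $\|\nabla \mathcal{L}_\alpha(\mathbf{x}^k)\| = \tfrac{1}{\alpha}\|\mathbf{x}^{k+1}-\mathbf{x}^k\|$ (equality, from the iteration), and continuity of $\mathcal{L}_\alpha$. Applying the K{\L} inequality in a neighborhood of an accumulation point $\mathbf{x}^*$ and using the concave desingularizer $\varphi$ telescopes the trajectory length to $\sum_k \|\mathbf{x}^{k+1}-\mathbf{x}^k\|<\infty$, which forces $\mathbf{x}^k\to \mathbf{x}^*$. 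The step I expect to need the most care is the uniformization argument that confines the tail of $\{\mathbf{x}^k\}$ to a K{\L} neighborhood of $\mathbf{x}^*$ on which the inequality is valid; this relies on choosing the tail index so that both $\mathrm{dist}(\mathbf{x}^k,\mathbf{x}^*)$ and $\mathcal{L}_\alpha(\mathbf{x}^k)-\mathcal{L}_\alpha(\mathbf{x}^*)$ are sufficiently small, then showing the trajectory cannot escape.
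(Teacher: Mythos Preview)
Your proposal is correct and follows essentially the same approach as the paper: interpret DGD as gradient descent on $\mathcal{L}_\alpha$, derive sufficient descent from the Lipschitz constant $L_f + \alpha^{-1}(1-\lambda_n(W))$, use coercivity for boundedness, telescope for summability and the $o(1/k)$ running-best rates, exploit $\mathbf{1}^T(I-W)=0$ to transfer rates to $\tfrac{1}{n}\mathbf{1}^T\nabla\mathbf{f}$, and invoke the Attouch--Bolte K{\L} template for whole-sequence convergence. The only cosmetic difference is that your Cauchy--Schwarz bound carries an extra $1/n$ factor the paper omits, which is harmless (indeed slightly sharper).
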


\begin{Remark}
Let ${\bf x}^*$ be a stationary point of ${\cal L}_{\alpha}({\bf x})$, and thus
\begin{align}
\label{Eq:optcond-remark}
0 = \nabla {\bf f}({\bf x}^*) + \alpha^{-1} (I-W){\bf x}^*.
\end{align}
Since ${\bf 1}^T(I-W)=0$, \eqref{Eq:optcond-remark} yields
$0 = {\bf 1}^T \nabla {\bf f}({\bf x}^*),$
indicating that ${\bf x}^*$ is also a stationary point to the separable function $\sum_{i=1}^n f_i({\bf x}_{(i)})$.
Since the rows of ${\bf x}^*$ are not necessarily identical, we \emph{cannot} say ${\bf x}^*$ is  a stationary point to Problem \eqref{Eq:consensusProblem}. However, the differences between the rows of ${\bf x}^*$  are bounded, following our next result below adapted from \cite{Yin-DGD2013}: 
\end{Remark}
\begin{Propos}[Consensual bound on ${\bf x}^*$]
\label{propos:consensualbound}
For each iteration $k$, define $\bar{x}^k\triangleq\frac{1}{n}\sum_{i=1}^n {\bf x}_{(i)}^k.$
Then, it holds  for each node $i$ that
\begin{equation}\label{eq:consensualbound}\|{\bf x}_{(i)}^k-\bar{x}^k\|\le\frac{\alpha D}{1-\zeta},
\end{equation}
where $D$ is a universal bound of $\|{\bf \nabla f(x^k)}\|$ defined in Lemma \ref{Lemm:bound_grad_f} below, $\zeta$ is the second largest magnitude eigenvalue of $W$ specified in \eqref{Eq:zeta}. As $k\to\infty$, \eqref{eq:consensualbound} yields the consensual bound
$$\|{\bf x}_{(i)}^*-\bar{x}^*\|\le\frac{\alpha D}{1-\zeta},$$
where $\bar{x}^* \triangleq \frac{1}{n}\sum_{i=1}^n {\bf x}_{(i)}^*.$
\end{Propos}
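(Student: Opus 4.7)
The plan is to work with the deviation matrix $E^k \triangleq {\bf x}^k - {\bf 1}(\bar{x}^k)^T$, whose $i$-th row is $({\bf x}_{(i)}^k - \bar{x}^k)^T$, and to derive a contractive one-step recursion for its Frobenius norm. First, using ${\bf 1}^T W = {\bf 1}^T$ (from the symmetry plus null-space parts of Assumption \ref{Assump:MixMat}), I would left-multiply the DGD update \eqref{Eq:DGD} by $\tfrac{1}{n}{\bf 1}^T$ to obtain
$(\bar{x}^{k+1})^T = (\bar{x}^k)^T - \tfrac{\alpha}{n}{\bf 1}^T \nabla {\bf f}({\bf x}^k).$
Subtracting ${\bf 1}(\bar{x}^{k+1})^T$ from ${\bf x}^{k+1}$ and using $W{\bf 1} = {\bf 1}$, so that ${\bf 1}(\bar{x}^k)^T = W\,{\bf 1}(\bar{x}^k)^T$, yields the key identity
$E^{k+1} = W E^k - \alpha \bigl(I - \tfrac{1}{n}{\bf 1}{\bf 1}^T\bigr)\nabla {\bf f}({\bf x}^k).$

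Next, I would exploit the fact that $E^k$ lies in the subspace $\{Y:{\bf 1}^T Y = 0\}$, on which $W$ acts with spectral radius equal to $\zeta = \max\{|\lambda_2(W)|, |\lambda_n(W)|\}$ from \eqref{Eq:zeta}. Concretely, $W E^k = (W - \tfrac{1}{n}{\bf 1}{\bf 1}^T)E^k$, and by symmetry $\|W - \tfrac{1}{n}{\bf 1}{\bf 1}^T\|_2 = \zeta$, so $\|W E^k\|_F \leq \zeta \|E^k\|_F$. Together with the non-expansiveness of the projection $I-\tfrac{1}{n}{\bf 1}{\bf 1}^T$ and the uniform gradient bound $\|\nabla{\bf f}({\bf x}^k)\|_F \leq D$ from Lemma \ref{Lemm:bound_grad_f}, this gives
$\|E^{k+1}\|_F \leq \zeta \|E^k\|_F + \alpha D.$

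Finally, unrolling this scalar recursion produces the geometric bound
$\|E^k\|_F \leq \zeta^k \|E^0\|_F + \alpha D \sum_{j=0}^{k-1}\zeta^j \leq \zeta^k \|E^0\|_F + \frac{\alpha D}{1-\zeta},$
and the row-wise inequality $\|{\bf x}_{(i)}^k - \bar{x}^k\| \leq \|E^k\|_F$ delivers \eqref{eq:consensualbound} (with the transient term $\zeta^k\|E^0\|_F$ vanishing, which gives the stated asymptotic bound for $\|{\bf x}_{(i)}^* - \bar{x}^*\|$ after passing $k\to\infty$, using that $\{{\bf x}^k\}$ has the accumulation point ${\bf x}^*$ from Theorem \ref{Thm:Globalconverg}). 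The only nontrivial step is identifying $\zeta$ as the operator norm of $W$ restricted to ${\bf 1}^\perp$, equivalently as $\|W - \tfrac{1}{n}{\bf 1}{\bf 1}^T\|_2$; once that is in place, the remainder is routine algebra of a linear recursion.
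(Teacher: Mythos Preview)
Your argument is correct and is essentially the same spectral-contraction argument the paper relies on: the paper does not prove Proposition~\ref{propos:consensualbound} in-text (it is adapted from \cite{Yin-DGD2013}), but the machinery used in the proof of Proposition~\ref{Propos:asympconsensus}---unrolling \eqref{Eq:recursion-xk}, projecting onto ${\bf 1}^\perp$, and invoking $\|W^j - \tfrac{1}{n}{\bf 1}{\bf 1}^T\|\le \zeta^j$ from Lemma~\ref{Lemm:Wk}---is the same idea as your one-step recursion for $E^k$, just with the unrolling performed before rather than after taking norms. Your observation that a transient term $\zeta^k\|E^0\|_F$ appears is also correct; the bound \eqref{eq:consensualbound} for \emph{every} $k$ implicitly assumes a consensual initialization ${\bf x}^0$ (as in \cite{Yin-DGD2013}), while for arbitrary ${\bf x}^0$ your bound with the geometrically decaying transient still yields the limiting inequality for ${\bf x}^*$.
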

In Proposition \ref{propos:consensualbound}, the consensual bound is proportional to the step size $\alpha$ and inversely proportional to the gap between the largest and the second largest magnitude eigenvalues of $W$.

Let us compare the DGD iteration with the iteration of \emph{centralized gradient descent} \eqref{eq:graddesc} for $f(x)$. Averaging the rows of \eqref{Eq:DGD} yields the following comparison:
\begin{align}\label{eq:dgdavg}
\text{DGD averaged:}\quad \bar{x}^{k+1}&\gets \bar{x}^k -\alpha \bigg(\frac{1}{n}\sum_{i=1}^n \nabla f_{i}({\bf x}_{(i)}^k)\bigg).\\
\text{Centralized:}\quad \bar{x}^{k+1}&\gets \bar{x}^k -\alpha \bigg(\frac{1}{n}\sum_{i=1}^n \nabla f_{i}(\bar{x}^k)\bigg).\label{eq:graddesc}
\end{align}
Apparently, DGD approximates centralized gradient descent by evaluating $\nabla f_{(i)}$ at local variables ${\bf x}_{(i)}^k$ instead of the global average. We can estimate the error of this approximation as
\begin{align*}
&\|\frac{1}{n}\sum_{i=1}^n \nabla f_{i}({\bf x}_{(i)}^k)-\frac{1}{n}\sum_{i=1}^n \nabla f_{i}(\bar{x}^k)\| \\
&\le\frac{1}{n}\sum_{i=1}^n\|\nabla f_{i}({\bf x}_{(i)}^k)-\nabla f_{i}(\bar{x}^k)\|\le \frac{\alpha DL_f}{1-\zeta}.
\end{align*}
Unlike the convex analysis in \cite{Yin-DGD2013}, it is impossible to bound the difference between the sequences of \eqref{eq:dgdavg} and \eqref{eq:graddesc} without convexity because  the two sequences may converge to different stationary points of ${\cal L}_{\alpha}$.
\begin{Remark}
The K{\L} assumption on ${\cal L}_{\alpha}$ in Theorem \ref{Thm:Globalconverg} can be satisfied if each $f_i$ is a sub-analytic function. Since $\|{\bf x}\|_{I-W}^2$ is obviously sub-analytic and the sum of two sub-analytic functions remains sub-analytic,  ${\cal L}_{\alpha}$ is sub-analytic if each $f_i$ is so. See  \cite[Section 2.2]{Xu-Yin2013} for more details and examples.
\end{Remark}

\begin{Propos}[K{\L} convergence rates]
\label{Propos:conv-rate}
Let the assumptions of Theorem {\ref{Thm:Globalconverg}} hold. Suppose that ${\cal L}_{\alpha}$ satisfies the K{\L} inequality at an accumulation point ${\bf x}^*$
with $\psi(s) = c s^{1-\theta}$ for some constant $c>0$. Then, the following convergence rates hold:
\begin{enumerate}
\item[(a)]
If $\theta = 0,$ ${\bf x}^k$ converges to ${\bf x}^*$ in finitely many iterations.

\item[(b)]
If $\theta \in (0,\frac{1}{2}]$, $\|{\bf x}^k - {\bf x}^*\| \leq C_0 \tau^k$ for all $k\geq k^*$ for some $k^*>0, C_0>0, \tau \in [0,1)$.

\item[(c)]
If $\theta \in (\frac{1}{2},1)$, $\|{\bf x}^k - {\bf x}^*\| \leq C_0k^{-(1-\theta)/(2\theta -1)}$ for all $k\geq k^*$, for certain $k^*>0, C_0>0$.
\end{enumerate}
\end{Propos}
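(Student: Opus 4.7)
The plan is to follow the standard Attouch--Bolte template for K{\L}-based rate analysis, adapted to the Lyapunov function ${\cal L}_{\alpha}$. The argument rests on three ingredients, two of which should already be available from the proof of Theorem \ref{Thm:Globalconverg}: (P1) a sufficient decrease estimate ${\cal L}_{\alpha}({\bf x}^{k+1}) \le {\cal L}_{\alpha}({\bf x}^{k}) - c_1\|{\bf x}^{k+1}-{\bf x}^{k}\|^2$ for some $c_1>0$, and (P2) a relative-error bound $\|\nabla{\cal L}_{\alpha}({\bf x}^{k+1})\|\le c_2\|{\bf x}^{k+1}-{\bf x}^{k}\|$ for some $c_2>0$. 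Property (P2) follows from rewriting the DGD update \eqref{Eq:DGD} as $\nabla{\cal L}_{\alpha}({\bf x}^{k}) = \nabla{\bf f}({\bf x}^{k})+\alpha^{-1}(I-W){\bf x}^{k} = -\alpha^{-1}({\bf x}^{k+1}-{\bf x}^{k})$ and then invoking the Lipschitz continuity of $\nabla{\cal L}_{\alpha}$ to pass from index $k$ to $k+1$.

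Set $r_k \triangleq {\cal L}_{\alpha}({\bf x}^{k}) - {\cal L}_{\alpha}({\bf x}^{*})\ge 0$. Theorem \ref{Thm:Globalconverg} gives ${\bf x}^{k}\to{\bf x}^{*}$, so for $k$ large enough ${\bf x}^{k}$ lies in the K{\L} neighborhood of ${\bf x}^{*}$. If $r_k>0$, the K{\L} inequality with $\psi(s)=cs^{1-\theta}$ reads $c(1-\theta)\,r_k^{-\theta}\|\nabla{\cal L}_{\alpha}({\bf x}^{k})\|\ge 1$, which combined with (P2) at the previous iterate yields $r_k^{\theta}\le C_1\|{\bf x}^{k}-{\bf x}^{k-1}\|$. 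Together with (P1) this gives the master recursion
\begin{equation*}
r_k^{2\theta} \le C_2\,(r_{k-1}-r_{k}).
\end{equation*}
Part (a) is then immediate: if $\theta=0$, the K{\L} inequality forces $\|\nabla{\cal L}_{\alpha}({\bf x}^{k})\|\ge 1/c$ whenever $r_k>0$, contradicting $\nabla{\cal L}_{\alpha}({\bf x}^{k})\to 0$ (a consequence of $\|{\bf x}^{k+1}-{\bf x}^{k}\|\to 0$ via the identity above); thus $r_k=0$ eventually, and (P1) forces ${\bf x}^{k+1}={\bf x}^{k}$ thereafter. For part (b), when $\theta\in(0,1/2]$ we have $2\theta\le 1$ and $r_{k-1}$ is bounded, so $r_k^{2\theta}\ge c_3 r_k$ for small $r_k$; the master recursion then gives $r_k\le(1-\rho)r_{k-1}$ for some $\rho\in(0,1)$, i.e.\ geometric decay of $r_k$. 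For part (c), $2\theta>1$ and a standard real-sequence lemma (e.g., Attouch--Bolte) applied to the recursion yields $r_k = O(k^{-1/(2\theta-1)})$.

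To pass from rates on $r_k$ to rates on $\|{\bf x}^{k}-{\bf x}^{*}\|$, I would invoke the telescoping trick that drives the whole-sequence convergence in Theorem \ref{Thm:Globalconverg}: using the concavity of $\psi$, (P1) and (P2) combine to give $\|{\bf x}^{k+1}-{\bf x}^{k}\|\le C_3\bigl[\psi(r_k)-\psi(r_{k+1})\bigr] + \tfrac12\|{\bf x}^{k}-{\bf x}^{k-1}\|$, and summing from $k$ to $\infty$ produces the tail bound
\begin{equation*}
\|{\bf x}^{k}-{\bf x}^{*}\|\le\sum_{j\ge k}\|{\bf x}^{j+1}-{\bf x}^{j}\|\le C_4\,\psi(r_k)=C_4 c\, r_k^{1-\theta}.
\end{equation*}
Plugging in the geometric bound on $r_k$ for $\theta\in(0,1/2]$ gives $\|{\bf x}^{k}-{\bf x}^{*}\|\le C_0\tau^{k}$ with $\tau=(1-\rho)^{1-\theta}\in[0,1)$, which is (b); plugging in $r_k=O(k^{-1/(2\theta-1)})$ for $\theta\in(1/2,1)$ gives $\|{\bf x}^{k}-{\bf x}^{*}\|\le C_0 k^{-(1-\theta)/(2\theta-1)}$, which is (c).

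The main obstacle is the tail bound $\sum_{j\ge k}\|{\bf x}^{j+1}-{\bf x}^{j}\|\le C\psi(r_k)$, since it requires a careful index bookkeeping: (P2) relates $\|\nabla{\cal L}_{\alpha}({\bf x}^{k+1})\|$ to $\|{\bf x}^{k+1}-{\bf x}^{k}\|$, whereas the K{\L} inequality at ${\bf x}^{k}$ involves $\|\nabla{\cal L}_{\alpha}({\bf x}^{k})\|$, so one must shift the concavity inequality $\psi(r_k)-\psi(r_{k+1})\ge\psi'(r_k)(r_k-r_{k+1})$ and the sufficient decrease estimate so that the cross terms close up via Young's inequality. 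Once this bookkeeping is settled, the three cases of the proposition follow by a purely scalar recursion argument.
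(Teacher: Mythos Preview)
Your proposal is correct and follows essentially the same route as the paper: verify the sufficient-decrease and relative-error conditions for ${\cal L}_{\alpha}$ (the paper obtains $\|\nabla{\cal L}_{\alpha}({\bf x}^{k+1})\|\le(\alpha^{-1}(2-\lambda_n(W))+L_f)\|{\bf x}^{k+1}-{\bf x}^k\|$ exactly as you do, via Lemma~\ref{Lemm:boundedgradient} and Lipschitz continuity), then invoke the Attouch--Bolte machinery to derive the telescoping inequality and the three-case scalar analysis. The paper simply cites \cite[Lemma 2.6]{Attouch2013} and \cite[Theorem 5]{Attouch-Bolte2009} for the index bookkeeping and the scalar recursion you spell out; your more explicit handling of the master recursion $r_k^{2\theta}\le C_2(r_{k-1}-r_k)$ is the content behind those citations.
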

Note that the rates in parts (b) and (c) of Proposition \ref{Propos:conv-rate} are of the \emph{eventual} type.

Using fixed step sizes, our results are limited because the stationary point ${\bf x}^*$ of ${\cal L}_{\alpha}$ is not a stationary point of the original problem. We only have a consensual bound on ${\bf x}^*$. To address this issue, the next subsection uses decreasing step sizes and presents better convergence results.

\subsubsection{Convergence of DGD with decreasing step sizes}

The positive consensual error bound in Proposition \ref{propos:consensualbound}, which is proportional to the constant step size $\alpha$, motivates the  use of properly decreasing step sizes $\alpha_k = {\cal O}(\frac{1}{(k+1)^{\epsilon}})$, for some $0<\epsilon \leq 1$, to  diminish the consensual bound to $0$.
As a result, any accumulation point ${\bf x}^*$ becomes a stationary point of the original problem \eqref{Eq:consensusProblem}. To analyze DGD with decreasing step sizes, we  add the following assumption.

\begin{Assump}[Bounded gradient]
\label{Assump:boundedgradient}
For any $k$, $\nabla {\bf f}({\bf x}^k)$ is uniformly bounded by some constant $B>0$, i.e., $\|\nabla {\bf f}({\bf x}^k)\|\leq B.$
\end{Assump}

Note that the bounded gradient assumption is a regular assumption in the convergence analysis of decentralized gradient methods (see, \cite{Bianchi2013,Bianchi2013b,Hong2016,Lorenzo2016a,Lorenzo2016b,Tatarenko2016conf,Tatarenko2016arXiv,Wai2016b,Wai2016a} for example), even in the convex setting \cite{Jakovetic-FastGradient2014} and also \cite{Chen2012b}, though it is  not required for centralized gradient descent.



{We take the step size sequence:
\begin{align}\label{eq:decralpha}
\alpha_k = \frac{1}{L_f(k+1)^{\epsilon}}, \quad 0<\epsilon \le 1,
\end{align}
throughout the rest part of this section. (The numerator 1 can be replaced by any positive constant.)
By iteratively applying iteration \eqref{Eq:DGD},} we obtain the following expression
\begin{align}
\label{Eq:recursion-xk}
{\bf x}^{k} = W^{k}{\bf x}^0 - \sum_{j=0}^{k-1} \alpha_j W^{k-1-j}\nabla {\bf f}({\bf x}^j).
\end{align}

\begin{Propos}[Asymptotic consensus rate]
\label{Propos:asympconsensus}
Let Assumptions \ref{Assump:MixMat} and \ref{Assump:boundedgradient} hold. Let DGD use \eqref{eq:decralpha}. Let ${\bar{\bf x}}^{k} \triangleq \frac{1}{n}{\bf 1}{\bf 1}^T {\bf x}^{k}$.  
Then,
$\|{\bf x}^{k} - {\bar{\bf x}}^{k}\|$ converges to 0 at the rate of ${\cal O}(1/(k+1)^{\epsilon})$.
\end{Propos}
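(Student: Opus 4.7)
The plan is to exploit the closed-form expression \eqref{Eq:recursion-xk} together with the doubly-stochastic, symmetric structure of $W$, then reduce the problem to bounding a scalar convolution $\sum_{j=0}^{k-1}\alpha_j \zeta^{k-1-j}$.

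First, let $J \triangleq \frac{1}{n}{\bf 1}{\bf 1}^T$. Since $W$ is symmetric with ${\bf 1}$ as the top eigenvector (eigenvalue $1$), we have ${\bf 1}^T W = {\bf 1}^T$, so $JW^m = J$ for every $m\ge 0$. Averaging \eqref{Eq:recursion-xk} therefore gives $\bar{\bf x}^k = J{\bf x}^0 - \sum_{j=0}^{k-1}\alpha_j J \nabla{\bf f}({\bf x}^j)$, and subtracting from \eqref{Eq:recursion-xk} yields
\begin{equation*}
{\bf x}^k - \bar{\bf x}^k = (W^k - J){\bf x}^0 - \sum_{j=0}^{k-1}\alpha_j\,(W^{k-1-j}-J)\,\nabla{\bf f}({\bf x}^j).
\end{equation*}
By Assumption \ref{Assump:MixMat}, $W-J$ is symmetric and its eigenvalues are exactly the eigenvalues of $W$ with the leading $1$ replaced by $0$; hence $\|W-J\|\le \zeta$ and, because $(W-J)^m = W^m - J$, we obtain the key spectral bound $\|W^m - J\|\le \zeta^m$ for all $m\ge 0$.

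Second, combining the spectral bound with Assumption \ref{Assump:boundedgradient} gives
\begin{equation*}
\|{\bf x}^k - \bar{\bf x}^k\| \;\le\; \zeta^k \|{\bf x}^0\| \;+\; B\sum_{j=0}^{k-1}\alpha_j\,\zeta^{k-1-j}.
\end{equation*}
The first term decays geometrically, so it is $o(\alpha_k)$. Everything reduces to showing the scalar bound
\begin{equation*}
S_k \;\triangleq\; \sum_{j=0}^{k-1}\frac{\zeta^{k-1-j}}{(j+1)^{\epsilon}} \;=\; {\cal O}\!\left(\frac{1}{(k+1)^{\epsilon}}\right).
\end{equation*}

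Third, I will prove this by the standard split-the-sum device at $j=\lfloor k/2\rfloor$, which plays the role of the ``auxiliary sequence'' hinted at in the introduction. For the late indices $j\ge k/2$, the factor $(j+1)^{-\epsilon}\le (k/2+1)^{-\epsilon}$ is pulled out and the remaining geometric tail is bounded by $(1-\zeta)^{-1}$, giving a contribution ${\cal O}(k^{-\epsilon})$. For the early indices $j<k/2$, one pulls $\zeta^{k/2}$ out of the geometric factor and bounds the residual $\sum_{j=0}^{k/2-1}(j+1)^{-\epsilon}$ by $C k^{1-\epsilon}$ (or $C\ln k$ if $\epsilon=1$); the product $\zeta^{k/2} \cdot k^{1-\epsilon}$ decays faster than any polynomial and is therefore $o(k^{-\epsilon})$. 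Adding both pieces yields $S_k={\cal O}(k^{-\epsilon})$, which is exactly the claimed rate.

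The main obstacle is the scalar convolution estimate in the third step: the naive bound $\sum_j \alpha_j \zeta^{k-1-j}\le \zeta^{k-1}\sum_j \alpha_j/\zeta^j$ blows up, and the naive bound $\alpha_0 \sum_j\zeta^{k-1-j}$ gives only a constant. The split at $k/2$ is essential because it lets the geometric decay of $\zeta^{k-1-j}$ absorb the slow polynomial decay of $\alpha_j$ on the ``old'' half while the ``recent'' half automatically inherits the $(k+1)^{-\epsilon}$ rate of $\alpha_k$ itself. This matches the heuristic that a geometric-kernel convolution with a slowly decaying input produces an output whose rate is dictated by the input — which is what makes the consensus error track the step size.
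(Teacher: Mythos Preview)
Your proof is correct and follows essentially the same route as the paper: expand the recursion \eqref{Eq:recursion-xk}, apply the spectral bound $\|W^m-J\|\le C\zeta^m$, reduce to the scalar convolution $\sum_{j=0}^{k-1}\alpha_j\zeta^{k-1-j}$, and split the sum to extract the rate. The only difference is the split point---you cut at $\lfloor k/2\rfloor$, whereas the paper cuts at $j_k'=[k-1+2\log_\zeta((k+1)^{\epsilon})]=k-O(\ln k)$; your choice is the more elementary textbook device and yields the same ${\cal O}((k+1)^{-\epsilon})$ bound.
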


According to Proposition \ref{Propos:asympconsensus}, the iterates of DGD with decreasing step sizes can reach consensus asymptotically (compared to a nonzero bound in the fixed step size case  in Proposition \ref{propos:consensualbound}). 
Moreover, with a larger $\epsilon$, faster decaying step sizes generally imply a faster asymptotic consensus rate.
Note that $(I-W)\bar{\bf x}^k =0$ and thus $\|{\bf x}^k\|_{I-W}^2 = \|{\bf x}^k - {\bar{\bf x}}^{k}\|_{I-W}^2$. Therefore, the above proposition implies the following result.

\begin{Coro}
\label{Coro:conv-decreasingstepsize}
Apply the setting of Proposition \ref{Propos:asympconsensus}.
$\|{\bf x}^k \|_{I-W}^2$  converges to 0 at the rate of  ${\cal O}(1/(k+1)^{2\epsilon})$.
\end{Coro}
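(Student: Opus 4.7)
The plan is to show that Corollary \ref{Coro:conv-decreasingstepsize} is a direct consequence of Proposition \ref{Propos:asympconsensus} together with two elementary observations: first, that the average $\bar{\bf x}^k$ lies in the null space of $I-W$, so it contributes nothing to the induced semi-norm $\|\cdot\|_{I-W}$; and second, that the semi-norm $\|\cdot\|_{I-W}$ is controlled by the Euclidean norm times the operator norm of $I-W$, which is finite by Assumption \ref{Assump:MixMat}.

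The steps are as follows. First, I would note that $\bar{\bf x}^k = \frac{1}{n}{\bf 1}{\bf 1}^T{\bf x}^k$ has all rows equal, so each column of $\bar{\bf x}^k$ is a scalar multiple of $\mathbf{1}$. By Part (3) of Assumption \ref{Assump:MixMat}, $\mathrm{null}(I-W) = \mathrm{span}\{\mathbf{1}\}$, hence $(I-W)\bar{\bf x}^k = 0$. This gives
\begin{equation*}
\|{\bf x}^k\|_{I-W}^2 = \langle {\bf x}^k - \bar{\bf x}^k,\,(I-W)({\bf x}^k - \bar{\bf x}^k)\rangle = \|{\bf x}^k - \bar{\bf x}^k\|_{I-W}^2.
\end{equation*}
Second, since $I-W$ is symmetric positive semidefinite (by Assumption \ref{Assump:MixMat}, $W \preceq I$) with largest eigenvalue $1-\lambda_n(W) \in (0,2)$, I would bound
\begin{equation*}
\|{\bf x}^k - \bar{\bf x}^k\|_{I-W}^2 \le \bigl(1-\lambda_n(W)\bigr)\,\|{\bf x}^k - \bar{\bf x}^k\|^2.
\end{equation*}
Combining these two inequalities with Proposition \ref{Propos:asympconsensus}, which provides $\|{\bf x}^k - \bar{\bf x}^k\| = \mathcal{O}(1/(k+1)^{\epsilon})$, and squaring yields $\|{\bf x}^k\|_{I-W}^2 = \mathcal{O}(1/(k+1)^{2\epsilon})$, as desired.

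There is no substantive obstacle here: the bulk of the analytic content sits in Proposition \ref{Propos:asympconsensus}, while the corollary is a short algebraic reduction that rewrites the semi-norm in terms of the consensus residual ${\bf x}^k - \bar{\bf x}^k$ and then applies the already-known consensus rate. The only small point to be careful about is the matrix-vs.-vector notation: since ${\bf x}^k \in \mathbb{R}^{n\times p}$, the semi-norm $\|{\bf x}^k\|_{I-W}^2 = \langle {\bf x}^k,(I-W){\bf x}^k\rangle$ is to be read column-wise, but the identities $(I-W){\bf 1}= 0$ and the eigenvalue bound apply uniformly to each column, so the argument goes through without change.
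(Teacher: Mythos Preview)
Your proposal is correct and follows essentially the same route as the paper: the paper observes that $(I-W)\bar{\bf x}^k=0$, whence $\|{\bf x}^k\|_{I-W}^2=\|{\bf x}^k-\bar{\bf x}^k\|_{I-W}^2$, and then invokes Proposition~\ref{Propos:asympconsensus}. Your additional explicit step bounding $\|\cdot\|_{I-W}^2$ by $(1-\lambda_n(W))\|\cdot\|^2$ is the same inequality the paper uses elsewhere (cf.\ the proof of Lemma~\ref{Lemm:bk}), so there is no substantive difference.
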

Corollary \ref{Coro:conv-decreasingstepsize} shows that the sequence $\{{\bf x}^k\}$ in the $(I-W)$ semi-norm can decay to 0 at a sublinear rate. For any \emph{global} consensual solution ${\bf x}_{\mathrm{opt}}$ to problem \eqref{Eq:consensusProblem}, we have $\|{\bf x}^k - {\bf x}_{\mathrm{opt}}\|_{I-W}^2 =\|{\bf x}^k\|_{I-W}^2$ so, if $\{{\bf x}^k\}$ does converge to ${\bf x}_{\mathrm{opt}}$, then their distance in the same semi-norm decays at ${\cal O}(1/k^{2\epsilon})$.


\begin{Thm}[Convergence]
\label{Thm:Lalphak}
Let Assumptions \ref{Assump:objective}, \ref{Assump:MixMat} and \ref{Assump:boundedgradient} 
hold. Let DGD use step sizes \eqref{eq:decralpha}. Then
\begin{enumerate}
\item[(a)]
$\{{\cal L}_{\alpha_k}({\bf x}^k)\}$ and $\{{\bf 1}^T {\bf f}({\bf x}^k)\}$ converge to the same limit;


\item[(b)]
$\mathop{\lim}_{k\rightarrow \infty} {\bf 1}^T \nabla {\bf f}({\bf x}^k)=0,$
and any limit point of $\{{\bf x}^k\}$ is a stationary point of problem \eqref{Eq:consensusProblem};

\item[(c)] In addition, if there exists an isolated accumulation point, then $\{{\bf x}^k\}$ converges.
\end{enumerate}
\end{Thm}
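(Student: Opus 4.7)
The plan is to interpret DGD as gradient descent on the time-varying Lyapunov function ${\cal L}_{\alpha_k}$, since ${\bf x}^{k+1} - {\bf x}^k = -\alpha_k\nabla{\cal L}_{\alpha_k}({\bf x}^k)$, and to derive a near-descent inequality for it. Combining the $L_f$-Lipschitz descent estimate on ${\bf 1}^T{\bf f}$ with the identity
\[
\|{\bf x}^{k+1}\|_{I-W}^2 - \|{\bf x}^k\|_{I-W}^2 = 2\langle (I-W){\bf x}^k, {\bf x}^{k+1}-{\bf x}^k\rangle + \|{\bf x}^{k+1}-{\bf x}^k\|_{I-W}^2
\]
and the bound $\|\cdot\|_{I-W}^2 \le (1-\lambda_n(W))\|\cdot\|^2$ yields
\[
{\cal L}_{\alpha_k}({\bf x}^{k+1}) \le {\cal L}_{\alpha_k}({\bf x}^k) - \tfrac{\alpha_k}{2}\bigl(1+\lambda_n(W) - \alpha_k L_f\bigr)\|\nabla{\cal L}_{\alpha_k}({\bf x}^k)\|^2.
\]
The prefactor is $\Theta(\alpha_k)$ for $k$ large, since $\alpha_k L_f = (k+1)^{-\epsilon} \to 0$. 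To absorb the step-size change I would add the perturbation $\eta_k := \bigl(\tfrac{1}{2\alpha_{k+1}} - \tfrac{1}{2\alpha_k}\bigr)\|{\bf x}^{k+1}\|_{I-W}^2 \ge 0$; by Corollary \ref{Coro:conv-decreasingstepsize} and the mean-value estimate $\tfrac{1}{\alpha_{k+1}} - \tfrac{1}{\alpha_k} = O((k+1)^{\epsilon-1})$ (essentially Lemma \ref{Lemm:betak}), one finds $\eta_k = O((k+1)^{-\epsilon-1})$, which is summable. Applying the supermartingale-type Lemma \ref{Lemm:supermartingale} to ${\cal L}_{\alpha_k}({\bf x}^k) - \inf({\bf 1}^T{\bf f})$ then gives convergence of $\{{\cal L}_{\alpha_k}({\bf x}^k)\}$ to some finite $L^*$ and summability $\sum_k \alpha_k\|\nabla{\cal L}_{\alpha_k}({\bf x}^k)\|^2 < \infty$. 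Part (a) follows from ${\bf 1}^T{\bf f}({\bf x}^k) = {\cal L}_{\alpha_k}({\bf x}^k) - \tfrac{1}{2\alpha_k}\|{\bf x}^k\|_{I-W}^2$ with $\tfrac{1}{2\alpha_k}\|{\bf x}^k\|_{I-W}^2 = O((k+1)^{-\epsilon}) \to 0$.

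For part (b), I would use the key identity ${\bf 1}^T\nabla{\cal L}_{\alpha_k}({\bf x}^k) = {\bf 1}^T\nabla{\bf f}({\bf x}^k)$, which holds because ${\bf 1}^T(I-W) = 0$, together with Cauchy--Schwarz $\|{\bf 1}^T\nabla{\bf f}({\bf x}^k)\|^2 \le n\|\nabla{\cal L}_{\alpha_k}({\bf x}^k)\|^2$, to deduce $\sum_k \alpha_k\|{\bf 1}^T\nabla{\bf f}({\bf x}^k)\|^2 < \infty$. Since $\sum_k \alpha_k = \infty$ for $\epsilon\in(0,1]$, this yields $\liminf_k \|{\bf 1}^T\nabla{\bf f}({\bf x}^k)\| = 0$. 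To upgrade $\liminf$ to $\lim$, I would establish the slow-change bound $\bigl|\|{\bf 1}^T\nabla{\bf f}({\bf x}^{k+1})\|^2 - \|{\bf 1}^T\nabla{\bf f}({\bf x}^k)\|^2\bigr| = O(\alpha_k)$: Assumption \ref{Assump:boundedgradient} and Corollary \ref{Coro:conv-decreasingstepsize} give $\|\nabla{\cal L}_{\alpha_k}({\bf x}^k)\| \le B + \tfrac{C}{\alpha_k}\|{\bf x}^k\|_{I-W} = O(1)$, hence $\|{\bf x}^{k+1}-{\bf x}^k\| = O(\alpha_k)$, and $L_f$-smoothness of $\nabla{\bf f}$ closes the estimate. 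A standard ``excursion'' contradiction then rules out any persistent rise of $\|{\bf 1}^T\nabla{\bf f}({\bf x}^k)\|^2$ above some $\delta > 0$: such a rise must consume at least $\Omega(\delta)$ of consecutive $\alpha_i$ and thus contribute at least $\Omega(\delta^2)$ to the finite sum above, impossible for infinitely many rises. Therefore $\lim_k {\bf 1}^T\nabla{\bf f}({\bf x}^k) = 0$. Any accumulation point ${\bf x}^*$ is consensual by Corollary \ref{Coro:conv-decreasingstepsize}, say ${\bf x}^* = {\bf 1}(x^*)^T$, and continuity gives $\nabla f(x^*) = {\bf 1}^T\nabla{\bf f}({\bf x}^*) = 0$, proving stationarity for \eqref{Eq:consensusProblem}.

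Part (c) is a standard consequence of $\|{\bf x}^{k+1}-{\bf x}^k\| = O(\alpha_k) \to 0$ together with the hypothesized isolated accumulation point ${\bf x}^*$: otherwise the iterates would have to traverse a compact annulus $\{\tfrac{\epsilon}{2} \le \|{\bf x}-{\bf x}^*\| \le \epsilon\}$ infinitely often with vanishing step, producing by compactness a second accumulation point inside the annulus and contradicting isolation. I expect the principal obstacle to be the $\liminf$-to-$\lim$ upgrade in part (b). The naive idea of invoking Lipschitzness of $\nabla{\cal L}_{\alpha_k}$ fails because its modulus scales like $1/\alpha_k$; the fix is to project onto the consensus direction, where ${\bf 1}^T(I-W) = 0$ cancels the singular part of $\nabla{\cal L}_{\alpha_k}$, leaving genuinely $O(\alpha_k)$ fluctuations compatible with the summability estimate.
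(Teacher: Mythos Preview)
Your proposal is correct and follows essentially the same route as the paper's proof: the same near-descent inequality for ${\cal L}_{\alpha_k}$ with the summable perturbation $\eta_k$ (the paper's Lemma~\ref{Lemm:bk}), the supermartingale Lemma~\ref{Lemm:supermartingale}, the conclusion $\sum_k \alpha_k\|\bar\nabla{\bf f}({\bf x}^k)\|^2<\infty$ via projection onto the consensus direction, and the same Ostrowski-type isolation argument for (c). Your ``excursion'' contradiction to upgrade $\liminf$ to $\lim$ is exactly what the paper packages as Lemma~\ref{Lemm:betak} (convergence of weakly summable sequences), so the only cosmetic differences are that the paper writes the descent term as $\|{\bf x}^{k+1}-{\bf x}^k\|^2$ rather than $\alpha_k^2\|\nabla{\cal L}_{\alpha_k}({\bf x}^k)\|^2$, and that your cross-reference for the mean-value estimate $\alpha_{k+1}^{-1}-\alpha_k^{-1}=O((k+1)^{\epsilon-1})$ should point to Lemma~\ref{Lemm:alphak} rather than Lemma~\ref{Lemm:betak}.
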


In the proof of Theorem \ref{Thm:Lalphak}, we will establish
\[\sum_{k=0}^{\infty} \big(\alpha_k^{-1}(1+\lambda_n(W))-L_f\big)\|{\bf x}^{k+1}-{\bf x}^k\|^2 <\infty,\]
which implies that the running best rate of the sequence $\{\|{\bf x}^{k+1}-{\bf x}^k\|^2\}$ is $o(1/k^{1+\epsilon})$.
Theorem \ref{Thm:Lalphak} shows that the objective sequence converges, and any limit point of $\{{\bf x}^k\}$  is a stationary point of the original problem. However, there is no result on the convergence rate of the objective sequence to an optimal value, and it is generally difficult to get such a rate without convexity.

Although our primary focus is nonconvexity, next we  assume convexity and present the objective convergence rate, which has an interesting relation with $\epsilon$.

For any ${\bf x} \in \mathbb{R}^{n\times p}$, let ${\bar f}({\bf x}) \triangleq \sum_{i=1}^n f_i({\bf x}_{(i)})$. Even if $f_i$'s are convex, the solution to \eqref{Eq:consensusProblem} may be non-unique. Thus, let ${\cal X}^*$ be the set of solutions to \eqref{Eq:consensusProblem}. Given ${\bf x}^k$, we pick the solution ${\bf x}_{\mathrm{opt}}= \mathrm{Proj}_{{\cal X}^*}({\bf x}^k) \in {\cal X}^*$. Also let $f_{\mathrm{opt}} = {\bar f}({\bf x}_{\mathrm{opt}})$ be the optimal value of \eqref{Eq:multi-agentOPT}. Define the ergodic objective:
\begin{align}
\label{Eq:def-bar-fk}
\bar{f}^K = \frac{\sum_{k=0}^K \alpha_k {\bar f}({\bar {\bf x}}^{k+1})}{\sum_{k=0}^K \alpha_k},
\end{align}
where $\bar{\bf x}^{k+1} = \frac{1}{n}({\bf 1}^T {\bf x}^{k+1}){\bf 1}$.
Obviously,
\begin{align}
\label{Eq:barf-K}
\bar{f}^K \geq \mathop{\min}_{k=1,\ldots,K+1} {\bar f}(\bar{\bf x}^k).
\end{align}

\begin{Propos}[Convergence rates under convexity]
\label{Propos:convergrate-dgd}
Let Assumptions \ref{Assump:objective}, \ref{Assump:MixMat} and \ref{Assump:boundedgradient} 
hold. Let DGD use step sizes \eqref{eq:decralpha}. If  $\lambda_n(W)>0$ and each $f_i$ is convex, then $\{{\bar f}^K\}$ defined in \eqref{Eq:def-bar-fk} converges to the optimal objective value $f_{\mathrm{opt}}$ at the following rates:
\begin{enumerate}
\item[(a)] if $0<\epsilon<1/2$, the rate is ${\cal O}(\frac{1}{K^{\epsilon}})$;

\item[(b)] if $\epsilon=1/2$, the rate is ${\cal O}(\frac{\ln K}{\sqrt{K}})$;

\item[(c)] if $1/2<\epsilon<1$, the rate is ${\cal O}(\frac{1}{K^{1-\epsilon}})$;

\item[(d)] if $\epsilon=1$, the rate is ${\cal O}(\frac{1}{\ln K})$.
\end{enumerate}
\end{Propos}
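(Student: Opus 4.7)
The plan is to reduce the analysis to the averaged iterate $\bar{x}^k \in \mathbb{R}^p$. Since ${\bf 1}^T(I-W)=0$, averaging the rows of \eqref{Eq:DGD} yields the centralized-like recursion
\[
\bar{x}^{k+1} = \bar{x}^k - \alpha_k g_k, \qquad g_k \triangleq \frac{1}{n}\sum_{i=1}^n \nabla f_i({\bf x}_{(i)}^k),
\]
where $\|g_k\| \le B/\sqrt{n}$ by Assumption~\ref{Assump:boundedgradient}. Expanding $\|\bar{x}^{k+1}-x_{\mathrm{opt}}\|^2$ gives the standard cross-term identity
\[
\alpha_k\langle g_k,\bar{x}^k - x_{\mathrm{opt}}\rangle = \tfrac{1}{2}\|\bar{x}^k-x_{\mathrm{opt}}\|^2 - \tfrac{1}{2}\|\bar{x}^{k+1}-x_{\mathrm{opt}}\|^2 + \tfrac{1}{2}\alpha_k^2\|g_k\|^2.
\]

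Next, I would exploit pointwise convexity of each $f_i$ at ${\bf x}_{(i)}^k$, combined with the splitting ${\bf x}_{(i)}^k - x_{\mathrm{opt}} = (\bar{x}^k - x_{\mathrm{opt}}) + ({\bf x}_{(i)}^k-\bar{x}^k)$ and Cauchy--Schwarz on the residual, to obtain
\[
\sum_{i=1}^n\bigl(f_i({\bf x}_{(i)}^k) - f_i(x_{\mathrm{opt}})\bigr) \le n\langle g_k,\bar{x}^k - x_{\mathrm{opt}}\rangle + B\,\|{\bf x}^k-\bar{\bf x}^k\|.
\]
To swap the left-hand side for $\bar{f}(\bar{\bf x}^{k+1})$, I would apply the $L_f$-descent lemma to each $f_i$ at ${\bf x}_{(i)}^k$ with the increment $\bar{x}^{k+1}-{\bf x}_{(i)}^k = -\alpha_k g_k + (\bar{x}^k-{\bf x}_{(i)}^k)$, whose norm is controlled by $\alpha_k\|g_k\|+\|{\bf x}_{(i)}^k-\bar{x}^k\|$. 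Multiplying by $\alpha_k$ and substituting the cross-term identity then produces the per-iteration bound
\begin{align*}
2\alpha_k\bigl({\bar f}(\bar{\bf x}^{k+1})-f_{\mathrm{opt}}\bigr) \le{}& n\bigl(\|\bar{x}^k-x_{\mathrm{opt}}\|^2 - \|\bar{x}^{k+1}-x_{\mathrm{opt}}\|^2\bigr) \\
& {} + C_1\alpha_k^2 + C_2\,\alpha_k\|{\bf x}^k-\bar{\bf x}^k\|,
\end{align*}
with constants $C_1,C_2$ depending on $B$, $L_f$, and $n$. The assumption $\lambda_n(W)>0$ enters here to make $W \succ 0$, so that estimates such as $\|W{\bf x}\|^2 \le \|{\bf x}\|_W^2$ and the associated $W$-induced semi-norms carry the right sign when folding the mixing step into the telescoping chain.

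Summing this per-iteration bound from $k=0$ to $K$ telescopes the first two terms. Proposition~\ref{Propos:asympconsensus} supplies $\|{\bf x}^k-\bar{\bf x}^k\| = \mathcal{O}((k+1)^{-\epsilon})$, so both $\sum_{k=0}^K\alpha_k^2$ and $\sum_{k=0}^K\alpha_k\|{\bf x}^k-\bar{\bf x}^k\|$ share the asymptotic order of $\sum_{k=0}^K (k+1)^{-2\epsilon}$, namely $\mathcal{O}(K^{1-2\epsilon})$ for $\epsilon<\tfrac12$, $\mathcal{O}(\ln K)$ for $\epsilon=\tfrac12$, and $\mathcal{O}(1)$ for $\epsilon>\tfrac12$. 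Paired with $\sum_{k=0}^K\alpha_k = \Theta(K^{1-\epsilon})$ for $\epsilon<1$ and $\Theta(\ln K)$ for $\epsilon=1$, the weighted-average definition \eqref{Eq:def-bar-fk} of $\bar{f}^K$ gives
\[
\bar{f}^K - f_{\mathrm{opt}} \le \frac{n\|\bar{x}^0-x_{\mathrm{opt}}\|^2 + C_1\sum_{k=0}^{K}\alpha_k^2 + C_2\sum_{k=0}^{K}\alpha_k\|{\bf x}^k-\bar{\bf x}^k\|}{2\sum_{k=0}^{K}\alpha_k},
\]
from which the four declared rates ${\cal O}(K^{-\epsilon})$, ${\cal O}(\ln K/\sqrt{K})$, ${\cal O}(K^{-(1-\epsilon)})$, and ${\cal O}(1/\ln K)$ pop out by case analysis on $\epsilon$.

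The main obstacle I anticipate is the cross-term bookkeeping in the telescoping step: one must show that every residual produced by applying the descent lemma across the $n$ nodes, by Cauchy--Schwarz, and by the mixing step fits inside exactly the two error sums $\sum\alpha_k^2$ and $\sum\alpha_k\|{\bf x}^k-\bar{\bf x}^k\|$. In particular, leftover quadratic consensus contributions such as $\sum\alpha_k\|{\bf x}^k-\bar{\bf x}^k\|^2$ must be absorbed into the linear consensus sum using the uniform boundedness of $\|{\bf x}^k-\bar{\bf x}^k\|$ from Proposition~\ref{Propos:asympconsensus}; otherwise one obtains residuals that do not telescope cleanly into the four stated rates. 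Once this bookkeeping is clean, the remainder reduces to standard integral comparisons for $\sum_{k=0}^K (k+1)^{-\gamma}$.
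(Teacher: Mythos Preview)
Your approach is viable but genuinely different from the paper's. The paper works in the full space $\mathbb{R}^{n\times p}$ through the Lyapunov function ${\cal L}_{\alpha_k}$: it first proves the three-point inequality (Lemma~\ref{Lemm:cvx-lip})
\[
{\cal L}_{\alpha_k}({\bf x}^{k+1}) - {\cal L}_{\alpha_k}({\bf u}) \le \tfrac{1}{2\alpha_k}\bigl(\|{\bf x}^k-{\bf u}\|^2-\|{\bf x}^{k+1}-{\bf u}\|^2\bigr),
\]
sets ${\bf u}={\bf x}_{\mathrm{opt}}$ (so ${\cal L}_{\alpha_k}({\bf x}_{\mathrm{opt}})=f_{\mathrm{opt}}$), and then passes from ${\cal L}_{\alpha_k}({\bf x}^{k+1})$ to $\bar f(\bar{\bf x}^{k+1})$ via convexity and the bounded-gradient assumption. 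A dedicated lemma (Lemma~\ref{Lemm:accum-cons}) bounds $\sum_k\alpha_k\|{\bf x}^{k+1}-\bar{\bf x}^{k+1}\|$ directly by $D_1+D_2\sum_k\alpha_k^2$, after which Lemma~\ref{Lemm:rate-sum-gammak} delivers the four rates. You instead average first and run a standard inexact-subgradient argument on $\bar{x}^k\in\mathbb{R}^p$; this is more elementary and avoids the Lyapunov machinery, at the cost of slightly messier cross-term bookkeeping when you swap $\sum_i f_i({\bf x}_{(i)}^k)$ for $\bar f(\bar{\bf x}^{k+1})$ via the descent lemma.

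One point to flag: your explanation of where $\lambda_n(W)>0$ enters is off. In your averaged-iterate route the mixing matrix disappears after applying ${\bf 1}^T$, and neither Proposition~\ref{Propos:asympconsensus} nor any of your remaining estimates needs $W\succ 0$; your reference to ``$\|W{\bf x}\|^2\le\|{\bf x}\|_W^2$'' does not actually appear in your chain. In the paper's proof, by contrast, $\lambda_n(W)>0$ is essential: the Lipschitz constant of $\nabla{\cal L}_{\alpha_k}$ is $L^*=L_f+\alpha_k^{-1}(1-\lambda_n(W))$, and the three-point inequality above requires $L^*\le\alpha_k^{-1}$, i.e.\ $\alpha_k\le\lambda_n(W)/L_f$, which is impossible if $\lambda_n(W)\le 0$. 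So the hypothesis is tied to the paper's Lyapunov route; your approach, if carried out cleanly, would in fact remove it.
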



The convergence rates established in  Proposition \ref{Propos:convergrate-dgd} almost as good as ${\cal O}(\frac{1}{\sqrt{K}})$ when $\epsilon=\tfrac{1}{2}$. As $\epsilon$ goes to either 0 or 1, the rates become slower, and $\epsilon=1/2$ may be the optimal choice in terms of the convergence rate. However, by Proposition \ref{Propos:asympconsensus}, a larger $\epsilon$ implies a faster consensus rate. Therefore, there is a tradeoff to choose an appropriate $\epsilon$ in the practical implementation of DGD.


\begin{Remark}
\label{Remark:Push-sum}
A related algorithm is the perturbed push-sum algorithm, also called subgradient-push, which was proposed in \cite{Kempe2003} for average consensus problem over time-varying network.
Its convergence in the convex setting was developed in \cite{Nedic-Subgradientpush2015}.
Recently, its convergence (to a critical point) in the nonconvex setting was established in \cite{Tatarenko2016arXiv} under some regularity assumptions. Moreover, by utilizing perturbations on the update process and the assumption of no saddle-point existence, almost sure convergence to a local minimum of its perturbed variant was also shown in \cite{Tatarenko2016arXiv}.

\end{Remark}

\begin{Remark}
\label{Remark:D-PSGD}
Another recent algorithm is decentralized stochastic gradient descent (D-PSGD) in \cite{Liu-D-PSGD2017} with support to nonconvex large-sum objectives.
An ${\cal O}(\frac{1}{K} + \frac{1}{\sqrt{nK}})$-ergodic convergence rate was established assuming $K$ is sufficiently large and the step size $\alpha$ is sufficiently small. 
When applied to the setting of this paper, \cite[Theorem 1]{Liu-D-PSGD2017} implies that the sequence $\{\frac{1}{K} \sum_{k=0}^{K-1} \|\frac{1}{n}{\bf 1}^T \nabla {\bf f}({\bf x}^k)\|^2\}$ converges to zero at the rate ${\cal O}(\frac{1}{K})$ if the step size $0<\alpha < \frac{1-\zeta}{6L_f\sqrt{n}}$, where $\zeta$ is defined in \eqref{Eq:zeta}.
From Theorem \ref{Thm:Globalconverg}, we can also establish such an ${\cal O}(\frac{1}{K})$-ergodic convergence rate of DGD as long as $0<\alpha < \frac{1+\lambda_n(W)}{L_f}$.
Similar rates of convergence to a stationary point have also been shown for different nonconvex algorithms in \cite{Hong2016,Tatarenko2016arXiv,Lan-AccSGD2016}.
\end{Remark}

\subsection{Convergence results of Prox-DGD}
Similarly, we consider the convergence of Prox-DGD with both a fixed step size and decreasing step sizes.
The iteration \eqref{Eq:proximalDGD} can be reformulated as
\begin{align}
\label{Eq:proximalDGDequiv1}
{\bf x}^{k+1} =\mathrm{prox}_{\alpha_k r}({\bf x}^k - \alpha_k\nabla{\cal L}_{\alpha_k}({\bf x^k}))
\end{align}
based on which, we define the Lyapunov function
\begin{align*}
\hat{\cal L}_{\alpha_k}({\bf x}) \triangleq {\cal L}_{\alpha_k}({\bf x}) + r({\bf x}),
\end{align*}
where we recall ${\cal L}_{\alpha_k}({\bf x}) = \sum_{i=1}^n f_i({\bf x}_{(i)}) +  \frac{1}{2\alpha_k}\|{\bf x}\|_{I-W}^2$. Then \eqref{Eq:proximalDGDequiv1} is clearly the forward-backward splitting (a.k.a., prox-gradient) iteration for
$\mathop{\mathrm{minimize}}_{\bf x} \,\hat{\cal L}_{\alpha_k}({\bf x}).$
Specifically, \eqref{Eq:proximalDGDequiv1} first performs gradient descent to the differentiable function ${\cal L}_{\alpha_k}({\bf x})$ and then computes the proximal of  $r({\bf x})$.

To analyze  Prox-DGD, we should revise Assumption \ref{Assump:objective} 
as follows.
\begin{Assump}[Composite objective]
\label{Assump:objective-composite}
The objective function of \eqref{Eq:CompositeProblem} satisfies the following:
\begin{enumerate}
\item[(1)]
Each $f_i$ is Lipschitz differentiable with constant $L_{f_i}>0$.

\item[(2)]
Each $(f_i+r_i)$ is proper, lower semi-continuous, coercive.
\end{enumerate}
\end{Assump}
As before,  $\sum_{i=1}^n f_i({\bf x}_{(i)})$ is $L_f$-Lipschitz differentiable for $L_f \triangleq \max_i L_{f_i}.$

\subsubsection{Convergence results of Prox-DGD with a fixed step size}
Based on the above assumptions, we can get the global convergence of Prox-DGD as follows.
\begin{Thm}[Global convergence of Prox-DGD]
\label{Thm:Globalconverg-PGDGD}
Let $\{{\bf x}^k\}$ be the sequence generated by Prox-DGD \eqref{Eq:proximalDGD} where the step size $\alpha$ satisfies $0<\alpha<\frac{1+\lambda_n(W)}{L_f}$ when $r_i$'s are convex; and $0<\alpha<\frac{\lambda_n(W)}{L_f}$, when $r_i$'s are not necessarily convex (this case requires $\lambda_n(W)>0$). Let Assumptions \ref{Assump:MixMat} and \ref{Assump:objective-composite} hold.
Then  $\{{\bf x}^k\}$  has at least one accumulation point ${\bf x}^*$, and any accumulation point is a stationary point of $\hat{\cal L}_{\alpha}({\bf x})$.
Furthermore, the running best rates
of the sequences $\{\|{\bf x}^{k+1} - {\bf x}^k\|^2\}$, $\{\|{\bf g}^{k+1}\|^2\}$ and
$\{\|\frac{1}{n}{\bf 1}^T\nabla {\bf f}({\bf x}^k) + \frac{1}{n}{\bf 1}^T \xi^k\|^2\}$
(where ${\bf g}^{k+1}$ is defined in Lemma \ref{Lemm:boundedsubgradient-PGDGD}, and $\xi^k$ is defined in Lemma \ref{Lemm:recursion-xk-prox-dgd})   are $o(\frac{1}{k})$.
The convergence rate of the sequence $\{\frac{1}{K} \sum_{k=0}^{K-1} \|\frac{1}{n}{\bf 1}^T (\nabla {\bf f}({\bf x}^k)+\xi^k)\|^2\}$ is
${\cal O}(\frac{1}{K})$.

In addition, if $\hat{\cal L}_{\alpha}$ satisfies the K{\L} property at an accumulation point ${\bf x}^*$, then $\{{\bf x}^k\}$ converges to ${\bf x}^*$.
\end{Thm}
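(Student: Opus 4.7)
The plan is to treat the Prox-DGD iteration in its equivalent form \eqref{Eq:proximalDGDequiv1} as a prox-gradient step on the Lyapunov function $\hat{\mathcal{L}}_\alpha = \mathcal{L}_\alpha + r$ and then to mirror the four-step argument that underlies Theorem~\ref{Thm:Globalconverg}: (i) a sufficient-descent inequality for $\hat{\mathcal{L}}_\alpha$ along the iterates, (ii) a relative-error bound on a selection from the limiting subdifferential, (iii) extraction of accumulation points and verification that each is stationary, and (iv) a K{\L} upgrade from subsequence to whole-sequence convergence. The genuinely new ingredient relative to the DGD proof is that $r$ itself may be nonconvex, which is precisely what dictates the stricter step-size range in that regime.

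The heart of the proof is the descent inequality $\hat{\mathcal{L}}_\alpha(\mathbf{x}^{k+1}) \le \hat{\mathcal{L}}_\alpha(\mathbf{x}^k) - c\,\|\mathbf{x}^{k+1}-\mathbf{x}^k\|^2$ for some $c>0$. When each $r_i$ is merely nonsmooth but possibly nonconvex, I would regard $\hat{\mathcal{L}}_\alpha$ as the sum of the smooth function $\mathcal{L}_\alpha$, whose gradient has Lipschitz constant $L_f + (1-\lambda_n(W))/\alpha$ (because $I-W$ has eigenvalues in $[0,1-\lambda_n(W)]$), and the nonsmooth $r$; the standard prox-gradient descent lemma then yields sufficient descent as soon as $\alpha < 1/\bigl(L_f + (1-\lambda_n(W))/\alpha\bigr)$, which rearranges to $\alpha < \lambda_n(W)/L_f$ and forces $\lambda_n(W)>0$. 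When every $r_i$ is convex I would instead combine the quadratic upper bound for $\mathbf{1}^T\mathbf{f}$, the quadratic identity for $\|\cdot\|_{I-W}^2$, and the firm nonexpansiveness of $\mathrm{prox}_{\alpha r}$ (via the convex subgradient inequality), which squeezes out an extra factor and restores the larger DGD range $\alpha < (1+\lambda_n(W))/L_f$.

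From descent and the lower boundedness of $\hat{\mathcal{L}}_\alpha$ (immediate from coercivity of each $f_i+r_i$ in Assumption~\ref{Assump:objective-composite} and nonnegativity of $\|\cdot\|_{I-W}^2$) I would extract $\sum_k \|\mathbf{x}^{k+1}-\mathbf{x}^k\|^2 < \infty$ and boundedness of $\{\mathbf{x}^k\}$, hence existence of accumulation points. The subgradient bound already packaged as Lemma~\ref{Lemm:boundedsubgradient-PGDGD} produces $\mathbf{g}^{k+1}\in\partial\hat{\mathcal{L}}_\alpha(\mathbf{x}^{k+1})$ with $\|\mathbf{g}^{k+1}\|\le C\,\|\mathbf{x}^{k+1}-\mathbf{x}^k\|$, and closedness of the limiting subdifferential then forces $0\in\partial\hat{\mathcal{L}}_\alpha(\mathbf{x}^*)$ at every accumulation point. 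For the $\frac{1}{n}\mathbf{1}^T$ quantity I would take the optimality relation from Lemma~\ref{Lemm:recursion-xk-prox-dgd}, left-multiply by $\frac{1}{n}\mathbf{1}^T$, and use $\mathbf{1}^T(I-W)=0$ to annihilate the consensus term, so the resulting norm is dominated by $\|\mathbf{g}^{k+1}\|$. The $o(1/k)$ running-best rates then follow from the elementary observation that for a summable nonnegative sequence $\{a_k\}$ the nonincreasing sequence $b_k=\min_{j\le k}a_j$ satisfies $k\,b_{2k}\le \sum_{j=k+1}^{2k}a_j \to 0$, while the ergodic $\mathcal{O}(1/K)$ bound follows directly from $\sum_{k=0}^{K-1}\|\cdot\|^2 \le M$ upon dividing by $K$.

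The final statement is handled by the Attouch--Bolte--Svaiter K{\L} scheme: the descent inequality (sufficient decrease), the subgradient bound (relative error), continuity of $\hat{\mathcal{L}}_\alpha$ along the sequence, and the K{\L} inequality at $\mathbf{x}^*$ together yield $\sum_k \|\mathbf{x}^{k+1}-\mathbf{x}^k\| < \infty$, so $\{\mathbf{x}^k\}$ is Cauchy and converges to $\mathbf{x}^*$. I expect the main technical obstacle to be obtaining a strictly positive descent constant in the convex-$r_i$ case at the larger step-size range $\alpha < (1+\lambda_n(W))/L_f$, because the naive prox-gradient upper bound is not tight enough there and one must combine three ingredients (the Lipschitz descent of $\mathbf{1}^T\mathbf{f}$, the cross-term identity for $\|\cdot\|_{I-W}^2$, and the convex-prox variational inequality) in a balanced way; the remainder of the argument is largely bookkeeping modelled on the proof of Theorem~\ref{Thm:Globalconverg}.
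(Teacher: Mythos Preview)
Your proposal is correct and follows essentially the same approach as the paper, which simply states that the proof is ``similar to that of Theorem~\ref{Thm:Globalconverg} and thus is omitted,'' relying on the Prox-DGD analogues (Lemmas~\ref{Lemm:Suffdescent-PGDGD}--\ref{Lemm:boundedsubgradient-PGDGD}) of the DGD lemmas. Your identification of the two descent mechanisms---the convex subgradient inequality when $r_i$'s are convex (yielding the larger step-size range) versus the bare prox-minimality inequality when they are not---matches exactly the C1/C2 split in Lemma~\ref{Lemm:Suffdescent-PGDGD}, and your handling of the $\frac{1}{n}\mathbf{1}^T$ quantity, the running-best and ergodic rates, and the K{\L} upgrade all mirror the argument for Theorem~\ref{Thm:Globalconverg}.
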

The rate of convergence of Prox-DGD can be also established by leveraging the 
K{\L} property. 
\begin{Propos}[Rate of convergence of Prox-DGD]
\label{Propos:conv-rate-prox-DGD}
Under assumptions of Theorem {\ref{Thm:Globalconverg-PGDGD}}, suppose that $\hat{\cal L}_{\alpha}$ satisfies the K{\L} inequality at an accumulation point $x^*$ with $\psi(s) = c_1 s^{1-\theta}$ for some constant $c_1>0$. Then the following hold:
\begin{enumerate}
\item[(a)]
If $\theta = 0,$ ${\bf x}^k$ converges to ${\bf x}^*$ in finitely many iterations.

\item[(b)]
If $\theta \in (0,\frac{1}{2}]$, $\|{\bf x}^k - {\bf x}^*\| \leq C_1 \tau^k$ for all $k\geq k^*$ for some $k^*>0, C_1>0, \tau \in [0,1)$.

\item[(c)]
If $\theta \in (\frac{1}{2},1)$, $\|{\bf x}^k - {\bf x}^*\| \leq C_1k^{-(1-\theta)/(2\theta -1)}$ for all $k\geq k^*$, for certain $k^*>0, C_1>0$.
\end{enumerate}
\end{Propos}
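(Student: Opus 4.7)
The plan is to adapt the by-now-standard Attouch--Bolte recipe for obtaining convergence rates from the K{\L} inequality, using the descent and relative-error estimates already established (implicitly) in the proof of Theorem \ref{Thm:Globalconverg-PGDGD}. Concretely, write $\Delta_k \triangleq \hat{\cal L}_{\alpha}({\bf x}^k) - \hat{\cal L}_{\alpha}({\bf x}^*) \ge 0$. From the prox-gradient structure of iteration \eqref{Eq:proximalDGDequiv1} together with the step-size condition in Theorem \ref{Thm:Globalconverg-PGDGD}, one obtains a sufficient decrease inequality of the form
\begin{equation*}
\Delta_k - \Delta_{k+1} \ge a \,\|{\bf x}^{k+1}-{\bf x}^k\|^2
\end{equation*}
for some $a>0$, and from Lemma \ref{Lemm:boundedsubgradient-PGDGD} a relative-error bound of the form $\|{\bf g}^{k+1}\| \le b\,\|{\bf x}^{k+1}-{\bf x}^k\|$ with ${\bf g}^{k+1}\in\partial\hat{\cal L}_\alpha({\bf x}^{k+1})$ and some $b>0$.

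Next, I would plug the desingularizing function $\varphi(s)=c_1 s^{1-\theta}$ into the K{\L} inequality to get $(1-\theta)c_1 \Delta_k^{-\theta}\|{\bf g}^k\|\ge 1$, which combined with the relative-error bound yields
\begin{equation*}
\Delta_k^{\theta} \le (1-\theta)c_1 b\,\|{\bf x}^k-{\bf x}^{k-1}\|.
\end{equation*}
Substituting this into the sufficient decrease inequality produces the canonical recursion
\begin{equation*}
\Delta_{k+1} \le \Delta_k - a'\,\Delta_{k+1}^{2\theta},
\end{equation*}
or equivalently, after summing the square-root trick of Attouch--Bolte for the iterate gaps, a recursion directly on $S_k \triangleq \sum_{j\ge k}\|{\bf x}^{j+1}-{\bf x}^j\|$, which by the telescoping/triangle inequality dominates $\|{\bf x}^k-{\bf x}^*\|$ (whole-sequence convergence has already been secured in Theorem \ref{Thm:Globalconverg-PGDGD}).

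From this recursion the three rate regimes follow by the usual case split: (a) if $\theta=0$, the K{\L} inequality forces $\|{\bf g}^k\|\ge 1/c_1$ as long as $\Delta_k>0$, but $\|{\bf g}^k\|\to 0$ by Theorem \ref{Thm:Globalconverg-PGDGD}, so $\Delta_k=0$ (and hence ${\bf x}^k={\bf x}^*$) after finitely many steps; (b) if $\theta\in(0,1/2]$, the recursion gives $\Delta_{k+1}\le q\Delta_k$ for some $q\in[0,1)$, which by a Lemma-1-of-Attouch--Bolte type argument transfers to geometric decay of $S_k$ and thus linear decay of $\|{\bf x}^k-{\bf x}^*\|$; (c) if $\theta\in(1/2,1)$, the recursion $S_k-S_{k+1}\ge \mathrm{const}\cdot S_k^{1/(2\theta)} \cdot \ldots$ can be solved by the standard differential-inequality comparison to get $S_k = {\cal O}(k^{-(1-\theta)/(2\theta-1)})$, hence the same polynomial rate on $\|{\bf x}^k-{\bf x}^*\|$.

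The main obstacle is neither the algebra of the recursions (which is routine once the two fundamental ingredients are in place) nor the case analysis, but verifying that the sufficient decrease and relative-error inequalities in fact hold for $\hat{\cal L}_\alpha$ under \emph{both} step-size regimes of Theorem \ref{Thm:Globalconverg-PGDGD}, particularly the nonconvex-$r_i$ case with $0<\alpha<\lambda_n(W)/L_f$. There, the descent constant $a$ depends delicately on $\lambda_n(W)-\alpha L_f$ rather than on $1+\lambda_n(W)-\alpha L_f$, so one must carefully track the constants through the prox-gradient descent lemma applied to the nonconvex sum of the smooth part ${\cal L}_{\alpha}$ and the possibly nonconvex $r$; once done, the remainder of the argument is a direct transcription of the standard K{\L}-rate proof.
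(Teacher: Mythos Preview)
Your proposal is correct and follows essentially the same approach as the paper: invoke the sufficient-descent lemma (Lemma \ref{Lemm:Suffdescent-PGDGD}) and the subgradient bound (Lemma \ref{Lemm:boundedsubgradient-PGDGD}), then apply the Attouch--Bolte K{\L} rate machinery exactly as in the proof of Proposition \ref{Propos:conv-rate}. The paper's proof consists of a single sentence referring back to Proposition \ref{Propos:conv-rate} together with the observation---which you also correctly flagged---that the descent constant $a$ becomes $\frac{1}{2}(\alpha^{-1}\lambda_n(W)-L_f)$ in the nonconvex-$r_i$ regime rather than $\frac{1}{2}(\alpha^{-1}(1+\lambda_n(W))-L_f)$.
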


\subsubsection{Convergence of Prox-DGD with decreasing step sizes}

In Prox-DGD, we also use the decreasing step size \eqref{eq:decralpha}. To investigate its convergence, 
the bounded gradient Assumption \ref{Assump:boundedgradient} should be revised as follows.

\begin{Assump}[Bounded composite subgradient]
\label{Assump:boundedcompositesubgradient}
For each $i$, $\nabla f_i$ is uniformly bounded by some constant $B_i>0$, i.e., $\|\nabla f_i(x)\|\leq B_i$ for any $x\in \mathbb{R}^p$. Moreover, $\|\xi_i\|\leq B_{r_i}$ for any $\xi_i \in \partial r_i(x)$ and $x\in \mathbb{R}^p$, $i=1\ldots,n$.
\end{Assump}
Let $\bar{B} \triangleq \sum_{i=1}^n (B_i+B_{r_i})$. Then $\nabla {\bf f}({\bf x})+\xi$ (where $\xi \in \partial r({\bf x})$ for any ${\bf x}\in \mathbb{R}^{n\times p}$) is uniformly bounded by $\bar{B}$. Note that the same assumption is used to analyze the convergence of distributed proximal-gradient method in the convex setting \cite{Chen2012a,Chen2012b}, and also is widely used to analyze the convergence of nonconvex decentralized algorithms like in \cite{Lorenzo2016a,Lorenzo2016b}. In light of Lemma \ref{Lemm:recursion-xk-prox-dgd} below, the claims in Proposition \ref{Propos:asympconsensus} and Corollary \ref{Coro:conv-decreasingstepsize} also hold for Prox-DGD.
\begin{Propos}[Asymptotic consensus and rate]
\label{Propos:asympconsensus-prox-dgd}
Let Assumptions \ref{Assump:MixMat} and \ref{Assump:boundedcompositesubgradient} hold. In Prox-DGD, use the step sizes \eqref{eq:decralpha}.  There hold
\begin{align*}
\|{\bf x}^{k} - {\bar{\bf x}}^{k}\| \leq C \big(\|{\bf x}^0\|\zeta^{k}+\bar{B} \sum_{j=0}^{k-1}\alpha_j \zeta^{k-1-j}\big),
\end{align*}
and $\|{\bf x}^{k} - {\bar{\bf x}}^{k}\|$ converges to 0 at the rate of ${\cal O}(1/(k+1)^{\epsilon})$.
Moreover, let ${\bf x}^*$ be any \emph{global}  solution of the problem \eqref{Eq:CompositeProblem}. Then $\|{\bf x}^k - {\bf x}^*\|_{I-W}^2 =\|{\bf x}^k\|_{I-W}^2=\|{\bf x}^k - \bar{\bf x}^*\|_{I-W}^2$ converges to 0 at the rate of  ${\cal O}(1/(k+1)^{2\epsilon})$.
\end{Propos}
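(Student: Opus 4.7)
The plan is to mirror the proof of Proposition~\ref{Propos:asympconsensus}, with the DGD recursion \eqref{Eq:recursion-xk} replaced by its Prox-DGD counterpart supplied by Lemma~\ref{Lemm:recursion-xk-prox-dgd}. Specifically, that lemma should give an expression of the form
\[
{\bf x}^k = W^k{\bf x}^0 - \sum_{j=0}^{k-1}\alpha_j W^{k-1-j}\bigl(\nabla{\bf f}({\bf x}^j) + \xi^j\bigr),
\]
for some $\xi^j \in \partial r({\bf x}^j)$. Assumption~\ref{Assump:MixMat} implies $W{\bf 1}={\bf 1}$, hence $\tfrac{1}{n}{\bf 1}{\bf 1}^T W^{\ell} = \tfrac{1}{n}{\bf 1}{\bf 1}^T$ for every $\ell \geq 0$. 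Subtracting $\bar{\bf x}^k = \tfrac{1}{n}{\bf 1}{\bf 1}^T {\bf x}^k$ from the recursion then yields
\[
{\bf x}^k - \bar{\bf x}^k = \bigl(W^k - \tfrac{1}{n}{\bf 1}{\bf 1}^T\bigr){\bf x}^0 - \sum_{j=0}^{k-1}\alpha_j\bigl(W^{k-1-j} - \tfrac{1}{n}{\bf 1}{\bf 1}^T\bigr)\bigl(\nabla{\bf f}({\bf x}^j)+\xi^j\bigr).
\]

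Next I would take norms. By the spectral properties of $W$ (symmetric, $\lambda_1(W)=1$ simple, all other eigenvalues bounded in modulus by $\zeta$), one gets $\|W^\ell - \tfrac{1}{n}{\bf 1}{\bf 1}^T\|\leq \zeta^\ell$. Combining this with Assumption~\ref{Assump:boundedcompositesubgradient}, which provides $\|\nabla{\bf f}({\bf x}^j)+\xi^j\|\leq \bar{B}$, produces the asserted bound
\[
\|{\bf x}^k - \bar{\bf x}^k\| \leq C\Bigl(\|{\bf x}^0\|\zeta^k + \bar{B}\sum_{j=0}^{k-1}\alpha_j\zeta^{k-1-j}\Bigr)
\]
for a universal constant $C$ absorbing any factor from norm equivalence. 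To convert the right-hand side into an $\mathcal{O}(1/(k+1)^{\epsilon})$ rate, I plan to split the convolution $\sum_{j=0}^{k-1}\alpha_j\zeta^{k-1-j}$ at $\lfloor k/2\rfloor$: on the lower range, $\zeta^{k-1-j}$ is exponentially small in $k$ and is easily dominated by $1/(k+1)^\epsilon$; on the upper range, $\alpha_j \leq \alpha_{\lfloor k/2\rfloor} = \mathcal{O}(1/(k+1)^\epsilon)$ since $\alpha_j$ is nonincreasing, while $\sum_{j}\zeta^{k-1-j} \leq 1/(1-\zeta)$. This splitting step, borrowed directly from the argument behind Proposition~\ref{Propos:asympconsensus}, is where the main bookkeeping resides; I expect no genuinely new obstacle because the structure of the sum is identical to the DGD case — only the driving-term bound changed from $B$ to $\bar{B}$.

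Finally, for the second claim, any global solution ${\bf x}^*$ of \eqref{Eq:CompositeProblem} is consensual, so its rows coincide and $(I-W){\bf x}^* = 0$. Together with $(I-W)\bar{\bf x}^k = (I-W)\bar{\bf x}^* = 0$, this gives
\[
\|{\bf x}^k - {\bf x}^*\|_{I-W}^2 = \|{\bf x}^k\|_{I-W}^2 = \|{\bf x}^k - \bar{\bf x}^*\|_{I-W}^2 = \|{\bf x}^k - \bar{\bf x}^k\|_{I-W}^2.
\]
Using $\|\cdot\|_{I-W}^2 \leq \|I-W\|\cdot\|\cdot\|^2$ and the $\mathcal{O}(1/(k+1)^\epsilon)$ decay of $\|{\bf x}^k - \bar{\bf x}^k\|$ established above, the squared semi-norm decays at $\mathcal{O}(1/(k+1)^{2\epsilon})$, completing the proof.
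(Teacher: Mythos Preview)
Your proposal is correct and follows essentially the same approach as the paper, which simply invokes Lemma~\ref{Lemm:recursion-xk-prox-dgd} and then repeats the argument of Proposition~\ref{Propos:asympconsensus} with the bound $B$ replaced by $\bar{B}$. Two minor remarks: the recursion from Lemma~\ref{Lemm:recursion-xk-prox-dgd} actually involves $\xi^{j+1}\in\partial r({\bf x}^{j+1})$ rather than $\xi^j$, though this is immaterial for the uniform bound; and your split at $\lfloor k/2\rfloor$ is a cleaner variant of the paper's split at $j_k'=[k-1+2\log_\zeta(b_k^{-1})]$, but both yield the same rate.
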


For any ${\bf x} \in \mathbb{R}^{n\times p}$, define ${\bar s}({\bf x}) = \sum_{i=1}^n f_i({\bf x}_{(i)}) + r_i({\bf x}_{(i)})$.
Let ${\cal X}^{\dag}$ be a set of solutions of \eqref{Eq:CompositeProblem}, ${\bf x}_{\mathrm{opt}}= \mathrm{Proj}_{{\cal X}^{\dag}}({\bf x}^k) \in {\cal X}^{\dag}$, and $s_{\mathrm{opt}} = {\bar s}({\bf x}_{\mathrm{opt}})$ be the optimal value of \eqref{Eq:CompositeProblem}.
Define
\begin{align}
\label{Eq:def-bar-sk}
\bar{s}^K = \frac{\sum_{k=0}^K \alpha_k {\bar s}({\bar {\bf x}}^{k+1})}{\sum_{k=0}^K \alpha_k}.
\end{align}

\begin{Thm}[Convergence and rate]
\label{Thm:hatLalphak}
Let Assumptions \ref{Assump:MixMat}, \ref{Assump:objective-composite} and \ref{Assump:boundedcompositesubgradient} hold. In Prox-DGD, use the step sizes \eqref{eq:decralpha}. Then
\begin{enumerate}
\item[(a)]
$\{\hat{\cal L}_{\alpha_k}({\bf x}^k)\}$ and $\{\sum_{i=1}^n f_i({\bf x}_{(i)}^k)+r_i({\bf x}_{(i)}^k)\}$ converge to the same limit;

\item[(b)]
$\sum_{k=0}^{\infty} \big(\alpha_k^{-1}(1+\lambda_n(W))-L_f\big)\|{\bf x}^{k+1}-{\bf x}^k\|^2 <\infty$ when $r_i$'s are convex; or, $\sum_{k=0}^{\infty} \big(\alpha_k^{-1}\lambda_n(W)-L_f\big)\|{\bf x}^{k+1}-{\bf x}^k\|^2 <\infty$ when $r_i$'s are not necessarily convex (this case requires $\lambda_n(W)>0$);

\item[(c)]
if $\{\xi^{k}\}$ satisfies $\|\xi^{k+1}-\xi^k\| \leq L_r \|{\bf x}^{k+1}-{\bf x}^k\|$ for each $k>k_0$, some constant $L_r>0$, and a sufficiently large integer $k_0>0$, then
\[\lim_{k\rightarrow \infty} {\bf 1}^T(\nabla {\bf f}({\bf x}^k)+{\xi}^{k+1})=0,\]
where ${\xi}^{k+1}\in \partial r({\bf x}^{k+1})$ is the one determined by the proximal operator \eqref{eq:prox_def}, and any limit point is a stationary point of problem \eqref{Eq:CompositeProblem}.

\item[(d)] in addition, if there exists an isolated accumulation point, then $\{{\bf x}^k\}$ converges.

\item[(e)] furthermore, if $f_i$ and $r_i$ are convex and $\lambda_n(W)>0$, then the claims on the rates of $\{{\bar f}^K\}$ in Proposition \ref{Propos:convergrate-dgd} hold for the sequence $\{{\bar s}^K\}$ defined in \eqref{Eq:def-bar-sk}.
\end{enumerate}
\end{Thm}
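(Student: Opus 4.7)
The overall plan is to mirror the five-part program used for DGD (Theorem \ref{Thm:Lalphak} and Proposition \ref{Propos:convergrate-dgd}) while handling the nonsmooth term $r$ via the forward--backward interpretation \eqref{Eq:proximalDGDequiv1}. I would carry the Lyapunov function $\hat{\cal L}_{\alpha_k}({\bf x}) = {\cal L}_{\alpha_k}({\bf x}) + r({\bf x})$ through the analysis, and exploit the consensus rate already proved in Proposition \ref{Propos:asympconsensus-prox-dgd}. The case split between convex $r_i$ and nonconvex $r_i$ will be governed by whether one can absorb a $\tfrac{1}{2}\|{\bf x}^{k+1}-{\bf x}^k\|_{I-W}^2$-type term using nonexpansiveness of the prox (convex case) or must instead pay for it by requiring $\lambda_n(W)>0$ and a strictly smaller step (nonconvex case).

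First I would establish part (b), the key summability inequality, by a prox-gradient descent argument on $\hat{\cal L}_{\alpha_k}$. Starting from the optimality condition of the prox step at iteration $k$, one obtains
\[
\hat{\cal L}_{\alpha_k}({\bf x}^{k+1}) \leq \hat{\cal L}_{\alpha_k}({\bf x}^k) - \tfrac{1}{2}\bigl(\alpha_k^{-1}\mu - L_f\bigr)\|{\bf x}^{k+1}-{\bf x}^k\|^2,
\]
where $\mu = 1+\lambda_n(W)$ for convex $r_i$ and $\mu = \lambda_n(W)$ for nonconvex $r_i$; the second case uses the three-point inequality for nonconvex prox and needs $\lambda_n(W)>0$ to retain the quadratic bound. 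I then need to compare $\hat{\cal L}_{\alpha_{k+1}}$ with $\hat{\cal L}_{\alpha_k}$ at the same point, and show the extra term $\bigl(\tfrac{1}{2\alpha_{k+1}}-\tfrac{1}{2\alpha_k}\bigr)\|{\bf x}^{k+1}\|_{I-W}^2$ is summable; this is where Proposition \ref{Propos:asympconsensus-prox-dgd}'s ${\cal O}(1/(k+1)^{2\epsilon})$ bound on $\|{\bf x}^k\|_{I-W}^2$ and the chosen $\alpha_k$ are crucial, in the spirit of Lemma \ref{Lemm:supermartingale}. Telescoping and using the lower boundedness of $\sum (f_i+r_i)$ (Assumption \ref{Assump:objective-composite}) gives (b), and hence the running-best rate $o(1/k^{1+\epsilon})$ for $\|{\bf x}^{k+1}-{\bf x}^k\|^2$.

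Next for (a), I would write $\hat{\cal L}_{\alpha_k}({\bf x}^k) - \sum_i(f_i+r_i)({\bf x}^k) = \tfrac{1}{2\alpha_k}\|{\bf x}^k\|_{I-W}^2 = {\cal O}((k+1)^{-\epsilon})$ via Proposition \ref{Propos:asympconsensus-prox-dgd}, so the two sequences share a limit once either is shown to converge; convergence of $\hat{\cal L}_{\alpha_k}({\bf x}^k)$ follows from the supermartingale-type inequality in (b). For (c), the optimality condition of the prox step yields
\[
\nabla {\bf f}({\bf x}^k) + \xi^{k+1} + \alpha_k^{-1}(I-W){\bf x}^k = -\alpha_k^{-1}({\bf x}^{k+1}-{\bf x}^k);
\]
premultiplying by $\tfrac{1}{n}{\bf 1}^T$ kills $(I-W){\bf x}^k$, so bounded-subgradient (Assumption \ref{Assump:boundedcompositesubgradient}) and the $o(1/k^{1+\epsilon})$ rate from (b) force $\tfrac{1}{n}{\bf 1}^T(\nabla {\bf f}({\bf x}^k)+\xi^{k+1})\to 0$; to pass to a limit point ${\bf x}^*$ and identify it as stationary, I need the Lipschitz-type continuity on $\xi^k$ hypothesized in (c), which prevents $\xi^{k+1}$ from drifting away from $\partial r({\bf x}^*)$. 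Part (d) then follows from the classical Ostrowski lemma applied to $\|{\bf x}^{k+1}-{\bf x}^k\|\to 0$.

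Finally for part (e), under convexity of each $f_i+r_i$ and $\lambda_n(W)>0$, I would reuse the template of Proposition \ref{Propos:convergrate-dgd}: form the difference $\bar s(\bar{\bf x}^{k+1}) - s_{\mathrm{opt}}$, apply convexity of $\bar s$ together with the averaged iteration $\bar{\bf x}^{k+1} = \bar{\bf x}^k - \tfrac{\alpha_k}{n}{\bf 1}^T(\nabla{\bf f}({\bf x}^k)+\xi^{k+1})$, and bound the cross term using the consensus rate ${\cal O}((k+1)^{-\epsilon})$ plus the bounded subgradient. Telescoping with the weights $\alpha_k$ and dividing by $\sum_k \alpha_k$ produces the same four regimes as before. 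The main obstacle throughout is the nonconvex-$r_i$ branch of (b): the standard prox inequality loses its ``$+1$'' slack and one must carefully invoke $W\succ 0$ to keep the descent inequality strict, which is exactly what forces the tighter step size $\alpha < \lambda_n(W)/L_f$ and propagates through the rest of the argument.
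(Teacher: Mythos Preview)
Your plan for parts (a), (b), (d), and (e) is sound and matches the paper's approach. The paper packages the descent step as Lemma~\ref{Lemm:Suffdescent-PGDGD-decreasing} (exactly your inequality $\hat{\cal L}_{\alpha_k}({\bf x}^{k+1})\le\hat{\cal L}_{\alpha_k}({\bf x}^k)-\tfrac12(\alpha_k^{-1}\mu-L_f)\|{\bf x}^{k+1}-{\bf x}^k\|^2$ plus the $(\alpha_{k+1}^{-1}-\alpha_k^{-1})\|{\bf x}^{k+1}\|_{I-W}^2$ cross term), feeds it into Lemma~\ref{Lemm:supermartingale} after checking summability of the cross term via Proposition~\ref{Propos:asympconsensus-prox-dgd}, and for (e) works through the Lyapunov inequality $\hat{\cal L}_{\alpha_k}({\bf x}^{k+1})-\hat{\cal L}_{\alpha_k}({\bf u})\le\tfrac{1}{2\alpha_k}(\|{\bf x}^k-{\bf u}\|^2-\|{\bf x}^{k+1}-{\bf u}\|^2)$ (Lemma~\ref{Lemm:distant-hatLalpha}) rather than the averaged iteration directly, but that is cosmetically different from what you propose.

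The gap is in part (c). From the averaged optimality condition you correctly obtain
\[
\tfrac{1}{n}{\bf 1}^T(\nabla {\bf f}({\bf x}^k)+\xi^{k+1}) = -\alpha_k^{-1}(\bar{\bf x}^{k+1}-\bar{\bf x}^k),
\]
but the $o(1/k^{1+\epsilon})$ bound on $\|{\bf x}^{k+1}-{\bf x}^k\|^2$ is only a \emph{running-best} rate, and even the termwise consequence of (b), namely $\alpha_k^{-1}\|{\bf x}^{k+1}-{\bf x}^k\|^2\to 0$, does not give $\alpha_k^{-1}\|{\bf x}^{k+1}-{\bf x}^k\|\to 0$ because $\alpha_k^{-1}\to\infty$. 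So the averaged residual is not forced to zero by (b) alone. What (b) does give, through the identity above, is the \emph{weak} summability
\[
\sum_{k}\alpha_k\,\Bigl\|\tfrac{1}{n}{\bf 1}^T\bigl(\nabla {\bf f}({\bf x}^k)+\xi^{k+1}\bigr)\Bigr\|^2<\infty,
\]
which, since $\sum_k\alpha_k=\infty$, yields only $\liminf_k=0$. The paper upgrades this to $\lim_k=0$ via Lemma~\ref{Lemm:betak}: with $\beta_k:=\|\tfrac{1}{n}{\bf 1}^T(\nabla {\bf f}({\bf x}^k)+\xi^{k+1})\|^2$ one must verify $|\beta_{k+1}-\beta_k|\lesssim\alpha_k$, and it is precisely here---not in the passage to a limit point---that the hypothesis $\|\xi^{k+1}-\xi^k\|\le L_r\|{\bf x}^{k+1}-{\bf x}^k\|$ is used, together with Lipschitz $\nabla{\bf f}$, bounded subgradients, and the bound $\|{\bf x}^{k+1}-{\bf x}^k\|\lesssim\alpha_k$ coming from the consensus rate plus the averaged update. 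In short, you have misplaced the role of the Lipschitz assumption on $\{\xi^k\}$: it is the missing regularity ingredient in Lemma~\ref{Lemm:betak} that converts weak summability into genuine convergence, not merely a device for identifying the limit as stationary.
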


Theorem \ref{Thm:hatLalphak}(b) implies that the running best rate of $\|{\bf x}^{k+1}-{\bf x}^k\|^2$ is $o(\frac{1}{k^{1+\epsilon}})$.
The additional condition imposed on $\{\xi^k\}$ in Theorem \ref{Thm:hatLalphak}(c) is some type of restricted continuous regularity of the subgradient $\partial r$ with respect to the generated sequence.
If $\partial r$ is locally Lipschitz continuous in a neighborhood of a limit point, then such Lipschitz condition on $\{\xi^k\}$ can generally be satisfied, since $\{{\bf x}^k\}$ is asymptotic regular, and thus ${\bf x}^k$ will lies in such neighborhood of this limit point when $k$ is sufficiently large.
There are many kinds of proximal functions satisfying such assumption as studied in \cite{Zeng-IJT2016} (see, Remark \ref{Remark:Lip-subgrad} for detailed information).
Theorem \ref{Thm:hatLalphak}(e) gives the convergence rates of Prox-DGD in the convex setting.

\begin{Remark}
\label{Remark:Lip-subgrad}
A typical proximal function $r$ satisfying the assumption of Theorem \ref{Thm:hatLalphak} (c) is the $\ell_q$ quasi-norm ($0\leq q<1$) widely studied in sparse optimization, which takes the form
$r(x) = \sum_{i=1}^p |x_i|^q$\footnote{When $q=0$, we denote $0^0=0$.}.
From \cite{Chen-Lowerbound-Lq} and \cite{Zeng-IJT2016}, there is a positive lower bound for the absolute values of non-zero components of the solutions of $\ell_q$ regularized optimization problem.
Furthermore, as shown by \cite[Property 1(b)]{Zeng-IJT2016}, the sequence generated by Prox-DGD also has the similar lower bound property.
Moreover, by Theorem \ref{Thm:hatLalphak}(b), we have $\|{\bf x}^{k+1}-{\bf x}^k\|^2 \rightarrow 0$ as $k\rightarrow \infty$. Together with the lower bound property, we can easily obtain the finite support and sign convergence of $\{{\bf x}^k\}$, that is, the supports and signs of the non-zero components will freeze for sufficiently large $k$. When restricted to such nonzero subspace, the gradient of $r_i(u)=|u|^q$ is Lipschitz continuous for any $|u|\ge\tau$ and some positive constant $\tau$, where $\tau$ denotes the lower bound.
Besides $\ell_q$ quasi-norm, there are some other typical cases like SCAD \cite{Fan-SCAD2001} and MCP \cite{Zhang-MCP2010} widely used in statistical learning, satisfying the condition (c) of this theorem .
\end{Remark}

\begin{Remark}
\label{Remark:Proj-SGD}
One tightly related algorithm of Prox-DGD is the projected stochastic gradient descent (Proj SGD) method proposed by \cite{Bianchi2013} for solving the constrained multi-agent optimization problem with a convex constraint set.
When restricted to the deterministic case as studied in this paper,
the convergence results of Proj SGD are very similar to that of Prox-DGD (see, Theorem 4 (c)-(d) in this paper and \cite[Theorem 1]{Bianchi2013}).
However, there are some differences between \cite{Bianchi2013} and this paper.
In short, Proj SGD in \cite{Bianchi2013} uses \textbf{convex constraints}, which correspond to setting $r(x)$ in our paper as indicator functions of those convex sets. Our paper also considers \textbf{nonconvex functions} like $\ell_q$ quasi-norm ($0\leq q<1$), SCAD, and MCP, which are widely used in statistical learning.
Another difference is that Proj SGD of \cite{Bianchi2013} uses \textbf{adaptive-then-combine (ATC)} and Prox-DGD of this paper does \textbf{combine-then-adaptive (CTA)}.
By \cite[Assumption 2]{Bianchi2013},  Proj SGD uses decreasing step sizes like ${\cal O}(k^{-\epsilon})$ for some $\epsilon>1/2$.
We study the step size $\alpha_k = {\cal O}(k^{-\epsilon})$ for any $0<\epsilon \leq 1$ for Prox-DGD, as well as a fixed step size.
\end{Remark}

\section{Related works and discussions}
\label{sc:relatedwork}
We summarize some recent nonconvex decentralized algorithms in Table \ref{Tab:comp_scenario}. Most of them apply to either the smooth optimization problem \eqref{Eq:multi-agentOPT} or the composite optimization problem \eqref{Eq:multi-agentCompOPT} and use diminishing step sizes. Although \eqref{Eq:multi-agentOPT} is a special case of \eqref{Eq:multi-agentCompOPT} via letting $r_i(x)=0$, there are still differences in both algorithm design and theoretical analysis. Therefore, we divide their comparisons. 

We first discuss the algorithms for \eqref{Eq:multi-agentOPT}. In \cite{Tatarenko2016arXiv}, the authors proved the convergence of perturbed push-sum\footnote{The original form of this algorithm, push-sum, was proposed in \cite{Kempe2003} for the average consensus problem. It was modified and analyzed in \cite{Nedic-Subgradientpush2015} for convex consensus optimization problem over time-varying directed graphs.} for nonconvex \eqref{Eq:multi-agentOPT} under some regularity assumptions. They also 
introduced random perturbations to avoid local minima. The  network considered in \cite{Tatarenko2016arXiv} is time-varying and directed, and specific column stochastic matrices and diminishing step sizes are used. The convergence results for the deterministic perturbed push-sum algorithm obtained in \cite{Tatarenko2016arXiv} are similar to those of DGD developed in this paper under similar assumptions (see, Theorem \ref{Thm:Lalphak} above and
\cite[Theorem 3]{Tatarenko2016arXiv}). The detailed comparisons between two algorithms are illustrated in Remark \ref{Remark:Push-sum}.
In \cite{Liu-D-PSGD2017}, the sublinear convergence to a stationary point of D-PSGD algorithm was developed under the nonconvex setting. DGD studied in this paper can be viewed a special D-PSGD with a zero variance.
In \cite{Hong2016}, a primal-dual approximate gradient algorithm called {ZENITH} was developed for \eqref{Eq:multi-agentOPT}. 
The convergence of ZENITH was given in the expectation of constraint violation under the Lipschitz differentiable assumption and other assumptions.
The last one is the proximal primal-dual algorithm (Prox-PDA) recently proposed in \cite{Hong-Prox-PDA2017}. The ${\cal O}(\frac{1}{k})$-rate of convergence to a stationary point was established in \cite{Hong-Prox-PDA2017}. Latter, a perturbed variant of Prox-PDA was proposed in \cite{Hong-PP-PDA2017} for constrained composite (smooth+nonsmooth) optimization problem with a linear equality constraint.

Table \ref{Tab:comp_scenario} includes three algorithms for solving the composite problem \eqref{Eq:multi-agentCompOPT}, which are related to ours. All of them only deal with convex $r_i$ (whereas $r_i$ in this paper can also be nonconvex).
In \cite{Lorenzo2016b}, the authors proposed NEXT based on the previous successive convex approximation (SCA) technique. The iterates of NEXT include two stages, a local SCA stage to update local variables and a consensus update stage to fuse the information between agents. 
While NEXT has results similar to Prox-DGD using diminishing step sizes.
Another interesting algorithm is decentralized Frank-Wolfe (DeFW) proposed in \cite{Wai2016a} for nonconvex, smooth, constrained decentralized optimization, where a bounded convex constraint set is imposed. There are three steps at each iteration of DeFW: 
average gradient computation, local variable evaluation by Frank-Wolfe, 
and information fusion between agents. 
In \cite{Wai2016a}, the authors established convergence results similar to Prox-DGD under diminishing step sizes. 
The stochastic version of DeFW has also been developed in \cite{Wai2016b} for high-dimensional convex sparse optimization.
The next one is 
projected stochastic gradient algorithm (Proj SGD) \cite{Bianchi2013} for constrained, nonconvex, smooth consensus optimization with a convex constrained set. The detailed comparison between Proj SGD and Prox-DGD are shown in Remark \ref{Remark:Proj-SGD}.

Based on the above analysis, the convergence results of DGD and Prox-DGD with decreasing step sizes of this paper are comparable with most of the existing ones.
However, we allow nonconvex nonsmooth $r_i$ and are able to obtain the estimates of asymptotic consensus rates. We also establish global convergence
using a fixed step size 
while it is only found in ZENITH. 

\section{Numerical Experiments}
\label{sc:experiment}

In this section, we describe a set of numerical experiments mainly to verify our theoretical findings for DGD and Prox-DGD.
The comparisons between DGD (or Prox-DGD) and the other existing algorithms can be referred to these literature like \cite{Lorenzo2016a}, \cite{Hong-Prox-PDA2017}, \cite{Hong-PP-PDA2017}, \cite{Hong-zeroth-order2017}.

\subsection{Convergence of DGD}

We verify the performance of DGD using both fixed and diminishing step sizes in the experimental setting identical to \cite{Tatarenko2016arXiv}.
The following one dimensional decentralized optimization is considered in \cite{Tatarenko2016arXiv},
\[
\mathop{\mathrm{minimize}}_{x\in \mathbb{R}} f(x) = f_1(x)+f_2(x)+f_3(x),
\]
where
\begin{align*}
& f_1(x) = \left\{
\begin{array}{l}
(x^3 - 16x)(x+2), \quad \text{if} \ |x|\leq 10,\\
4248x - 32400, \quad  \text{if}\ x>10,\\
-3112x - 25040, \quad \text{if} \ x<-10,
\end{array}%
\right.\\
&f_2(x) = \left\{
\begin{array}{l}
(0.5x^3+x^2)(x-4), \quad \text{if} \ |x|\leq 10,\\
1620x-12600, \quad  \text{if}\ x>10,\\
-2220x-16600, \quad \text{if} \ x<-10,
\end{array}%
\right.\\
&f_3(x) = \left\{
\begin{array}{l}
(x+2)^2(x-4), \quad \text{if} \ |x|\leq 10,\\
288x-2016, \quad  \text{if}\ x>10,\\
288x-1984, \quad \text{if} \ x<-10.
\end{array}%
\right.
\end{align*}
We plot the function $f$ over the intervals $[-15,15]$ and $[-6,6]$ as shown in Fig. \ref{Fig:objfun}. The function achieves its global minimizer at $x=2.62$ and has a local minimizer at $x=-2.49$, as well as a local maximizer at $x = -1.12$.
It is easy to compute the Lipschitz constants of $\nabla f_1$, $\nabla f_2$ and $\nabla f_3$ as $L_1=1288$, $L_2=532$, and $L_3=60$, respectively. Thus, $L_f = \max_i L_i = 1288$, which is used in our theoretical analysis. Moreover, $\nabla f$ is obviously bounded.
We consider one of the following three connected networks:
\[
(1 \leftrightarrow 2, 2 \leftrightarrow 3), \quad (1 \leftrightarrow 3, 3 \leftrightarrow 2), \quad (2 \leftrightarrow 1, 1 \leftrightarrow 3).
\]
In the experiment, the mixing matrix $W$ is taken as
\[
W = \left(
\begin{array}{ccc}
\frac{1}{2} & 0 & \frac{1}{2} \\
0 &\frac{1}{2} & \frac{1}{2} \\
\frac{1}{2}  & \frac{1}{2} & 0
\end{array}
\right),
\]
which has the eigenvalues: 1, 0.5, -0.5. The mixing matrix $W$ is \emph{not} positive definite.
All the assumptions used in our theory are satisfied.

We test the performance of DGD with a theoretically fixed step size and several different kinds of decreasing step sizes, starting iterations from two different initial points:
\[
x_1^0:=(0,0,0), \quad \text{and} \quad x_2^0: = (-1, -1.2, -1.1).
\]
The second initialization is a ``dangerous'' point since it is close to local maximum and between two local minima (one of them is global).
The experiment results are reported in Fig. \ref{Fig:conv-dgd}.
From these figures, DGD successfully converges to the \textbf{global minimum} and achieves the consensus starting from both initial points though the objective function is nonconvex.

\begin{figure}[!t]
\begin{minipage}[b]{0.48\linewidth}
\centering
\includegraphics*[scale=0.3]{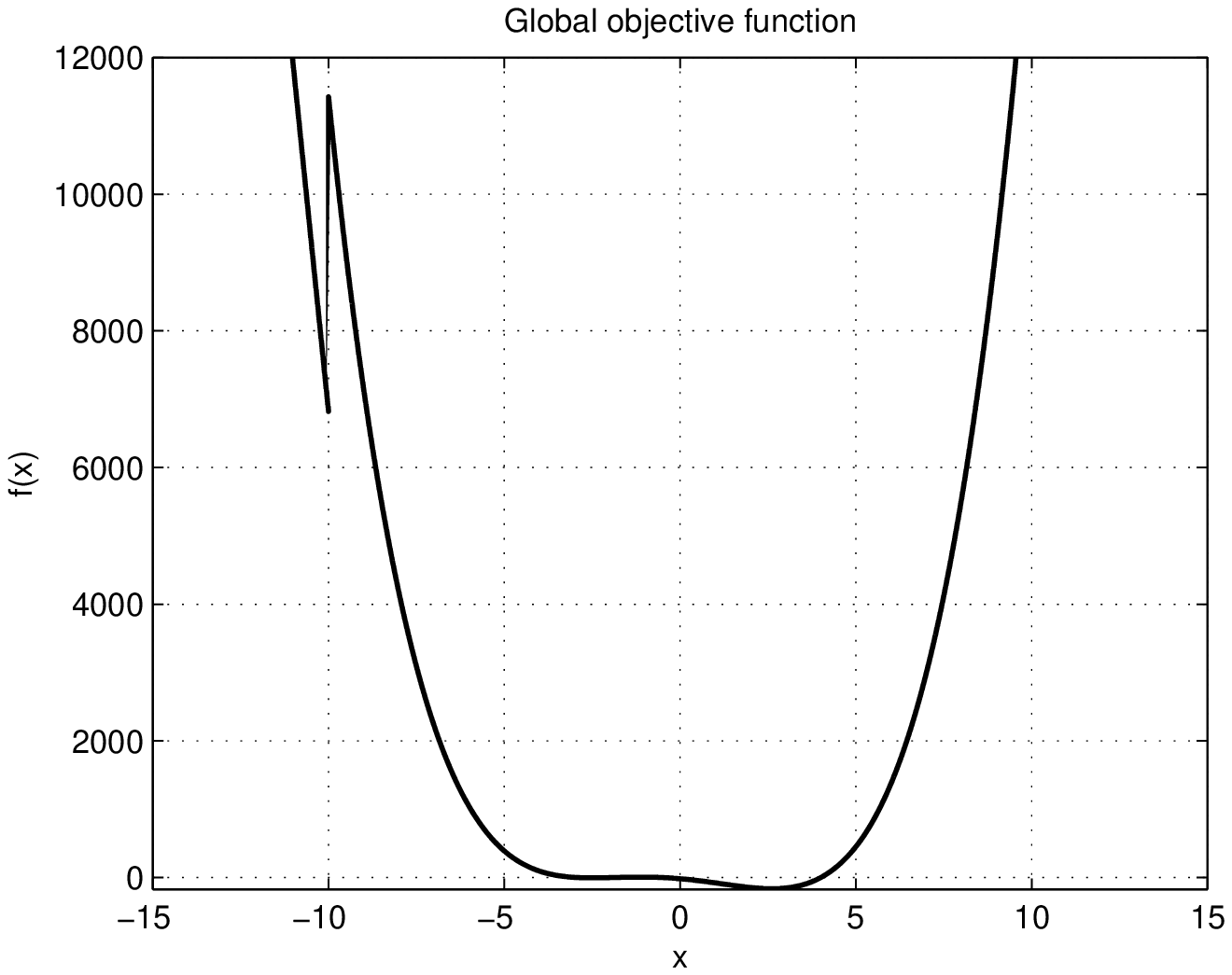}
\centerline{{\small (a) plot on $[-15,15]$}}
\end{minipage}
\hfill
\begin{minipage}[b]{0.48\linewidth}
\centering
\includegraphics*[scale=0.3]{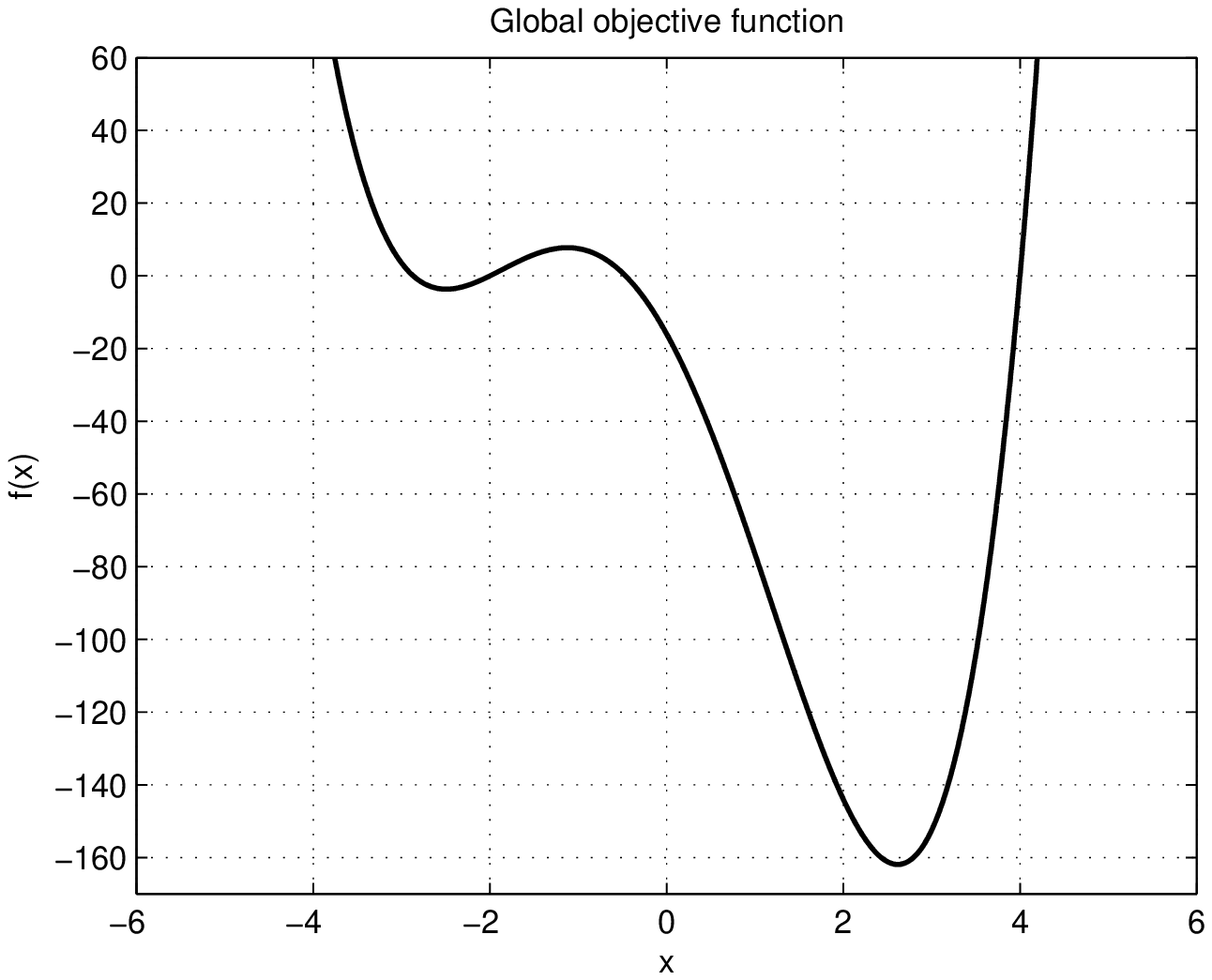}
\centerline{{\small (b) plot on $[-6,6]$}}
\end{minipage}
\hfill
\caption{ The plots of objective function $f$.
}
\label{Fig:objfun}
\end{figure}

\begin{figure}[!t]
\begin{minipage}[b]{0.48\linewidth}
\centering
\includegraphics*[scale=0.3]{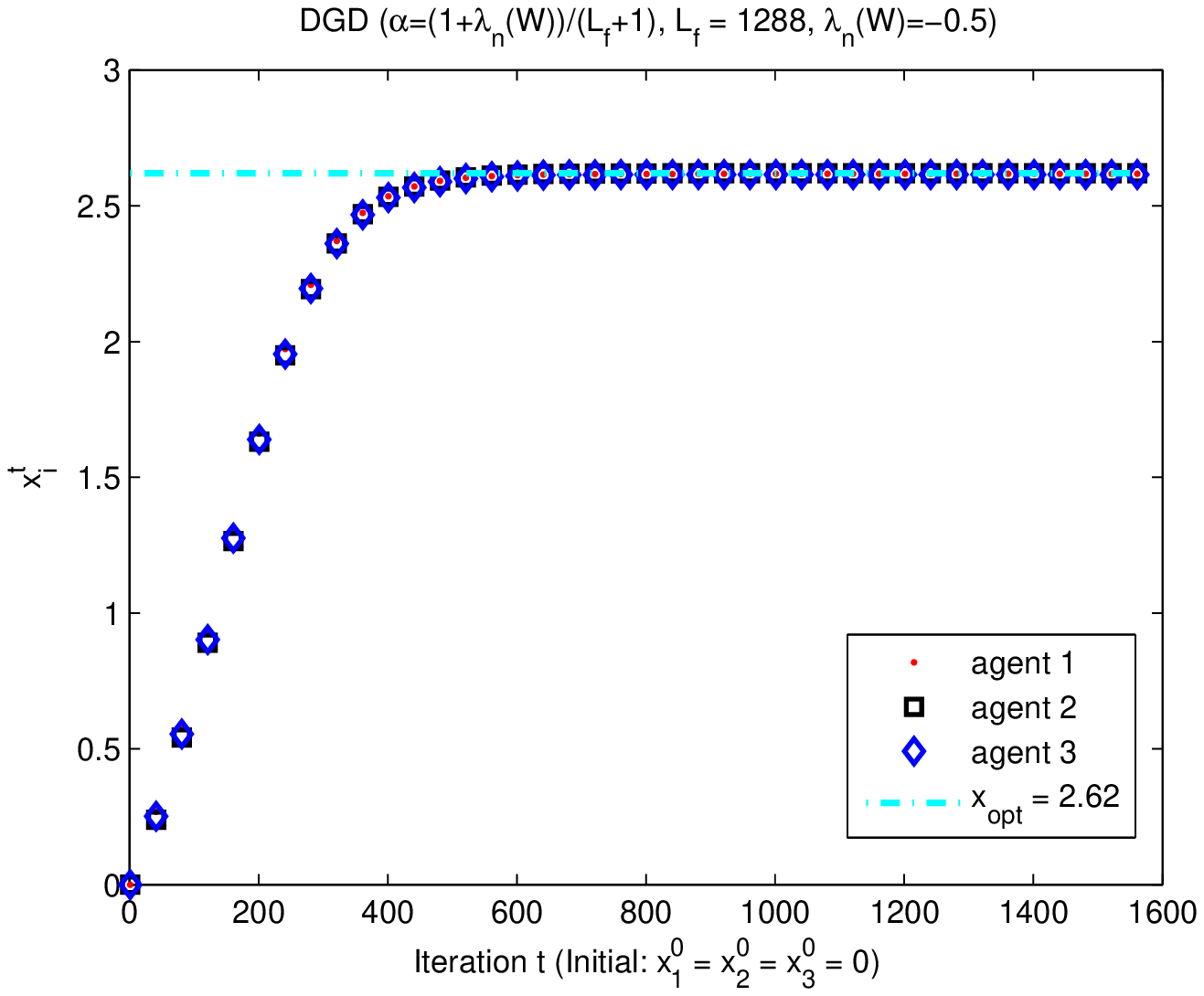}
\centerline{{\small (a) (fixed, $x_1^0$)}}
\end{minipage}
\hfill
\begin{minipage}[b]{0.48\linewidth}
\centering
\includegraphics*[scale=0.3]{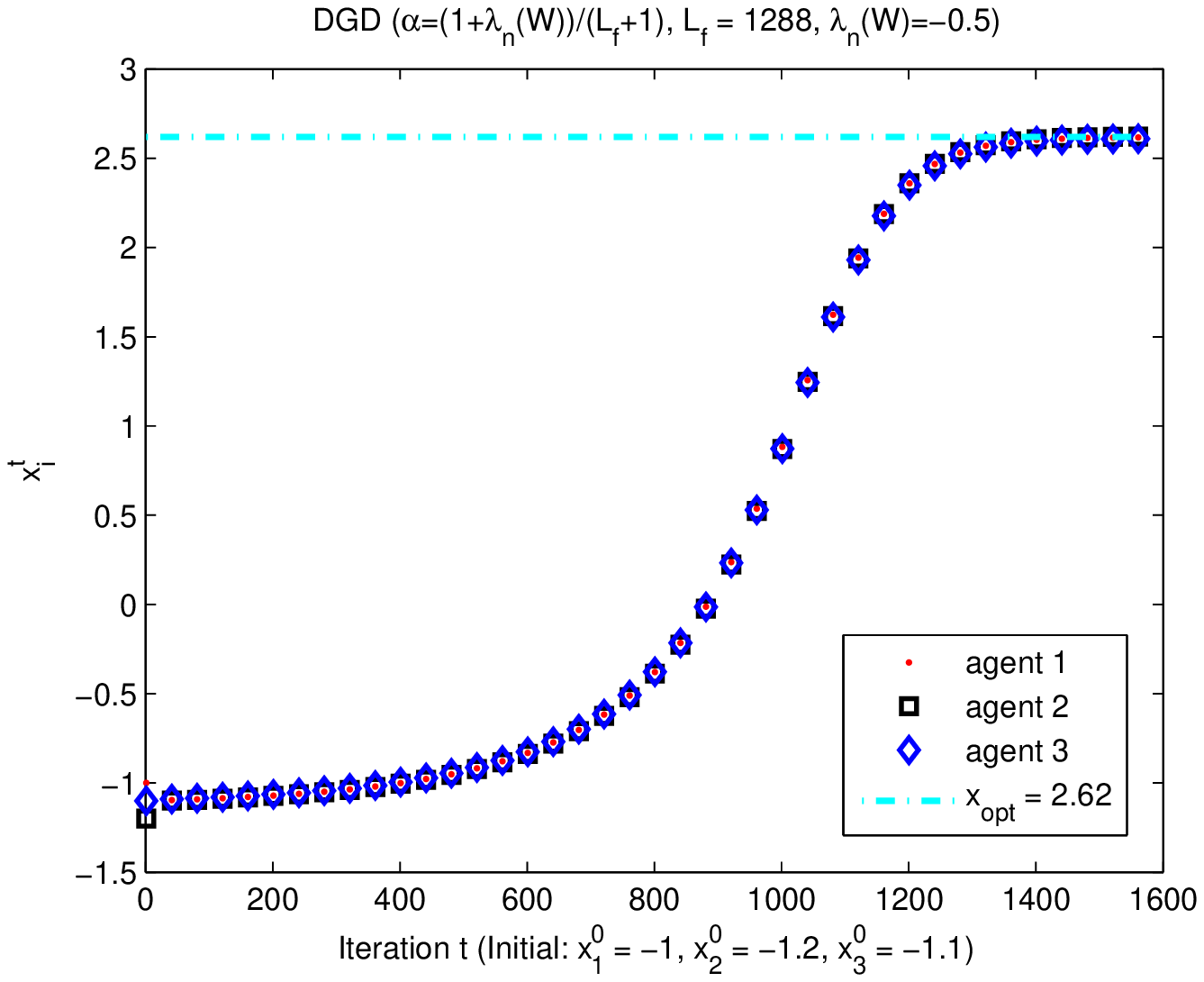}
\centerline{{\small (b) (fixed, $x_2^0$)}}
\end{minipage}
\hfill
\begin{minipage}[b]{0.48\linewidth}
\centering
\vspace{.5cm}
\includegraphics*[scale=0.3]{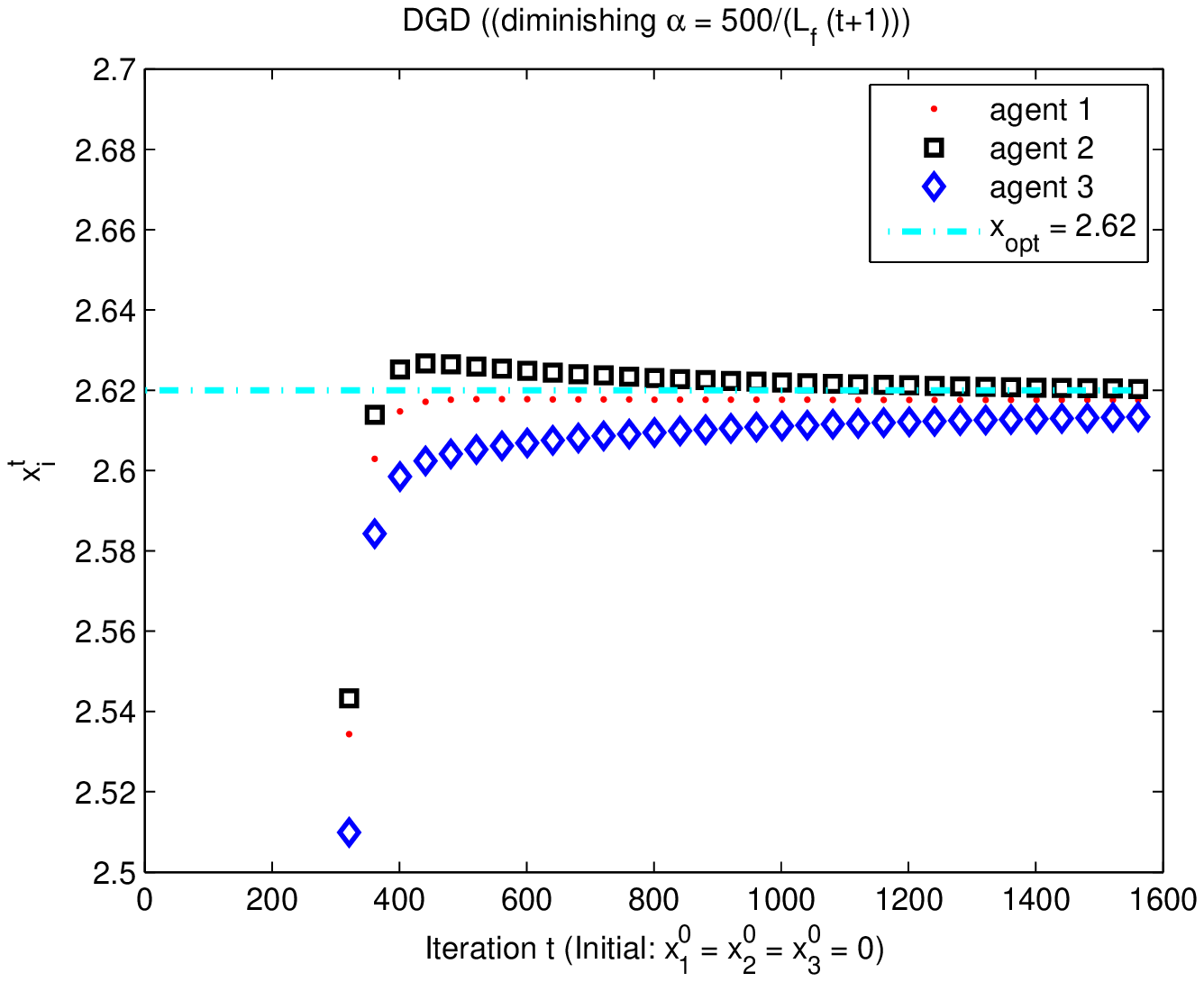}
\centerline{{\small (c) (${\cal O}(\frac{1}{t})$, $x_1^0$)}}
\end{minipage}
\hfill
\begin{minipage}[b]{0.48\linewidth}
\centering
\vspace{.5cm}
\includegraphics*[scale=0.3]{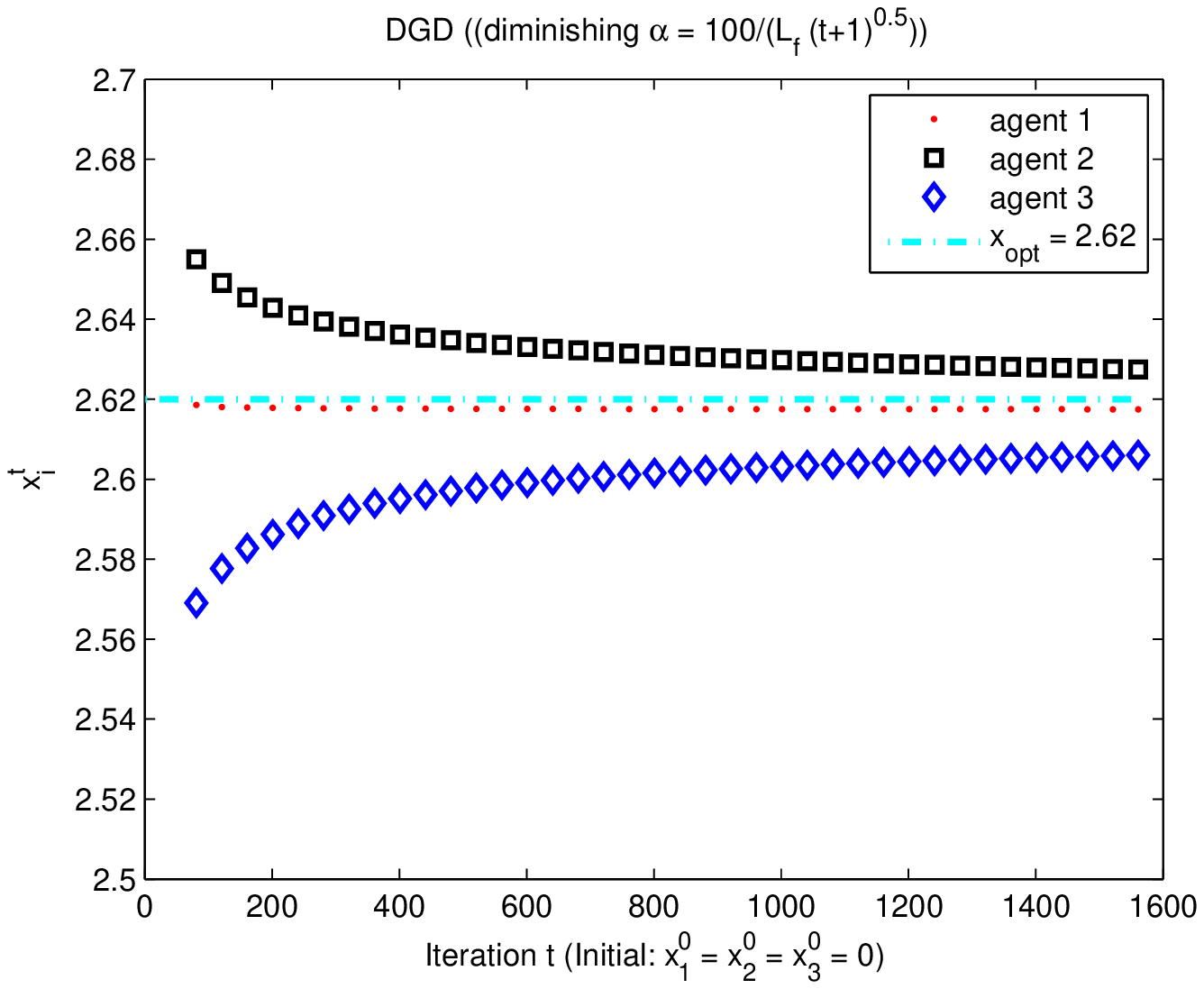}
\centerline{{\small (d) (${\cal O}(\frac{1}{\sqrt{t}})$, $x_1^0$)}}
\end{minipage}
\hfill
\begin{minipage}[b]{0.48\linewidth}
\centering
\vspace{.5cm}
\includegraphics*[scale=0.3]{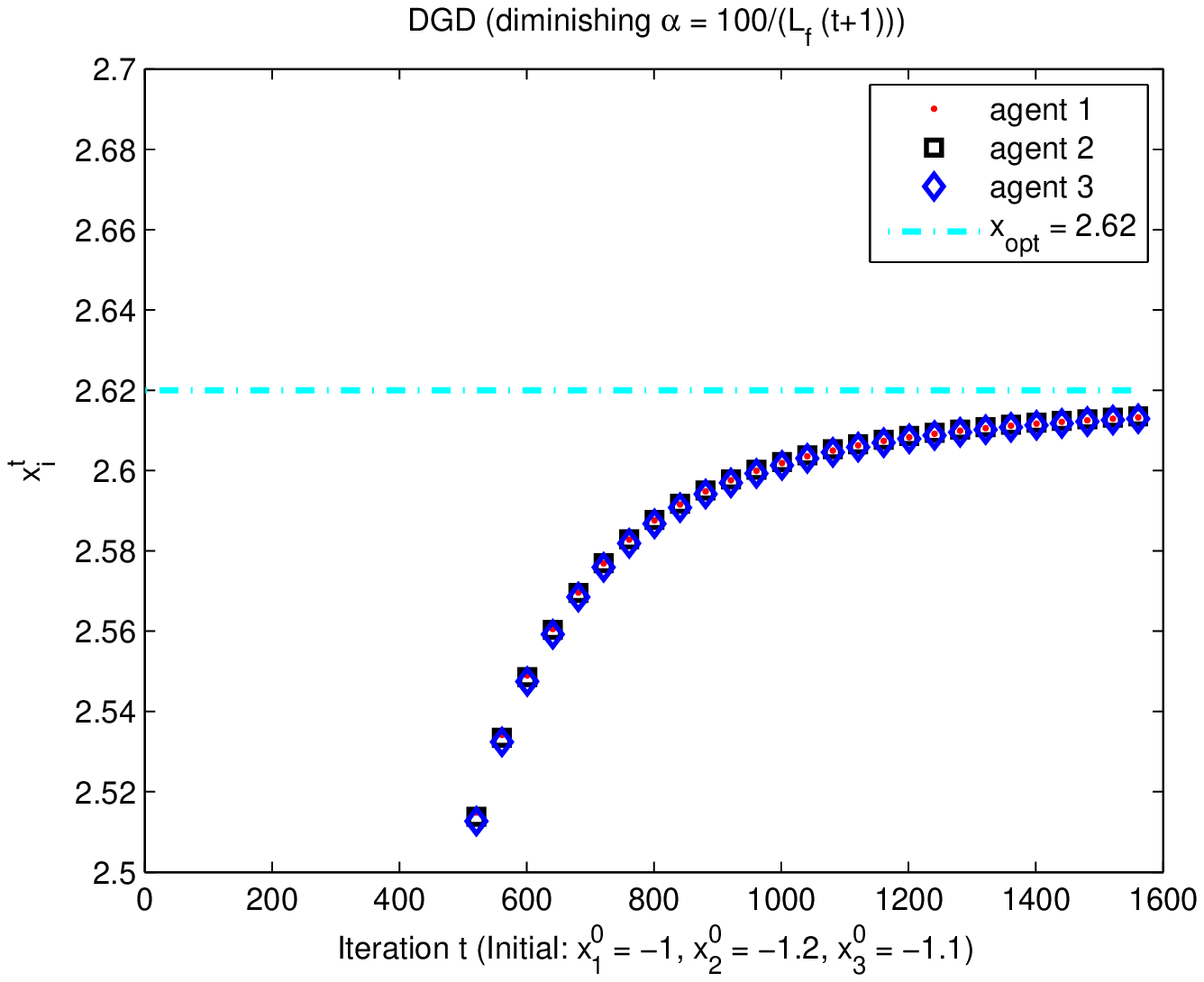}
\centerline{{\small (e) (${\cal O}(\frac{1}{t})$, $x_2^0$)}}
\end{minipage}
\hfill
\begin{minipage}[b]{0.48\linewidth}
\centering
\vspace{.5cm}
\includegraphics*[scale=0.3]{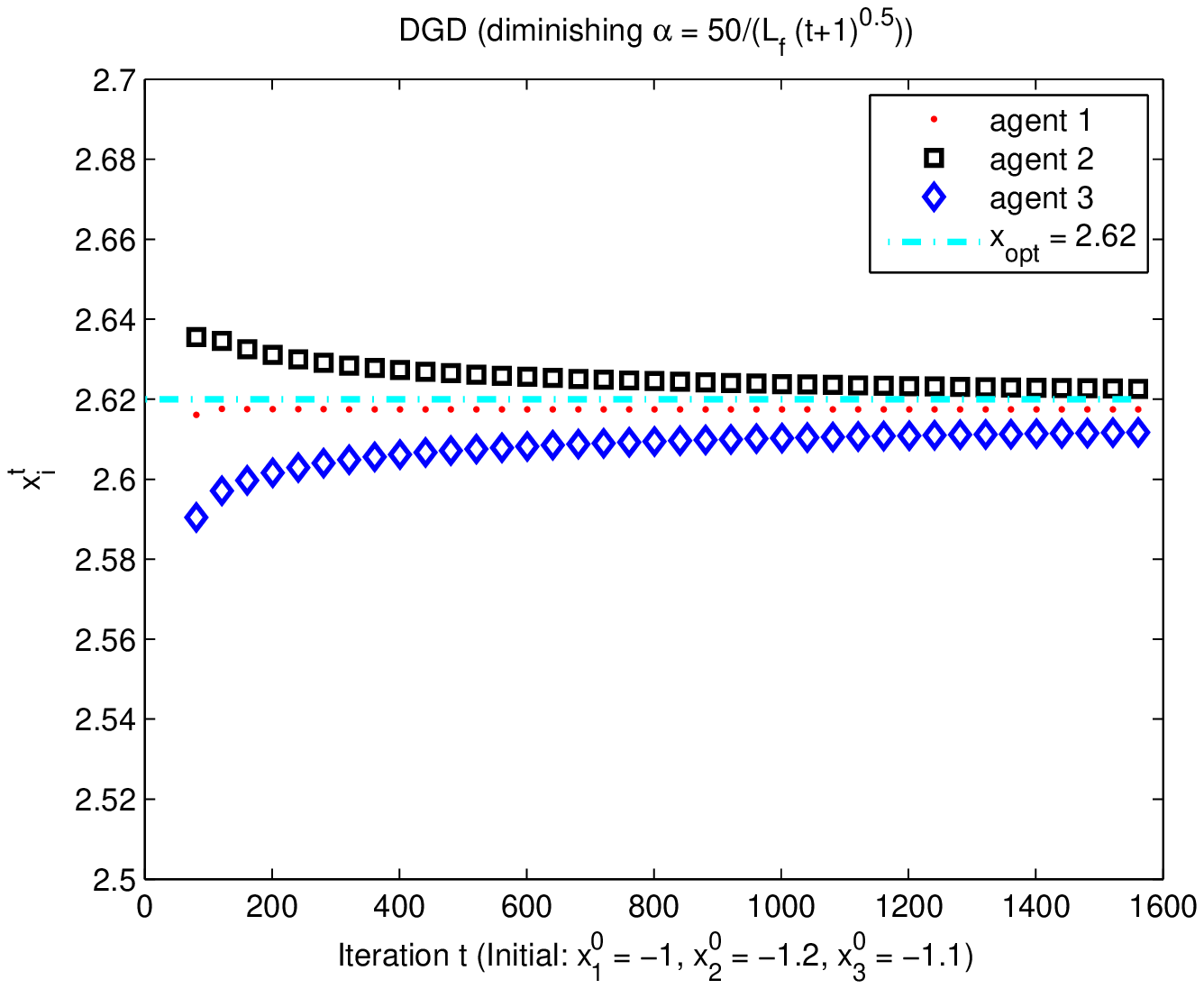}
\centerline{{\small (f) (${\cal O}(\frac{1}{\sqrt{t}})$, $x_2^0$)}}
\end{minipage}
\hfill
\caption{ The convergence behaviors of DGD in different cases.
Two initializations are considered, that is, $x^0_1:= (0,0,0)$ and $x^0_2:=(-1, -1.2, -1.1)$.
}
\label{Fig:conv-dgd}
\end{figure}

\subsection{Prox-DGD for decentralized $L_0$ regularization}

We apply Prox-DGD to solve the following nonconvex decentralized $L_0$ regularization problem:
\begin{equation}
\label{Eq:DistributedLS}
x^* \gets \mathop{\mathrm{argmin}}_{x\in \mathbb{R}^p} f(x) = \frac{1}{n}\sum_{i=1}^n f_i(x),
\end{equation}
where $f_i(x) = \frac{1}{2} \|B_{(i)} x-b_{(i)}\|_2^2+\lambda_i \|x\|_0, B_{(i)} \in \mathbb{R}^{m_i\times p}, b_{(i)} \in \mathbb{R}^{m_i}$ for $i=1, \ldots,n,$ and $\|x\|_0$ is called the $\ell_0$ quasi-norm, which yields the number of the nonzero components of $x$.
In this experiment, we take $n=10$, $p=256$, and $m_i = 150$ for $i=1,\ldots,n$. In this case, the proximal operator of $\ell_0$ quasi-norm is the well-known hard thresholding mapping:
\[
\mathrm{prox}_{\alpha\|\cdot\|_0}(x)
=\left\{
\begin{array}{l}
x, \quad \text{if} \  x<\sqrt{\alpha},\\
0, \quad \text{otherwise}.
\end{array}
\right.
\]

We do not consider the model selection problem, but just test the performance of Prox-DGD applied to such a given deterministic model. Take $\lambda_i = 0.5$ for each agent $i$. The connected network is shown in Fig. \ref{Fig:prox-dgd-network}.
\begin{figure}[!t]
\begin{minipage}[b]{1\linewidth}
\centering
\includegraphics*[scale=0.4]{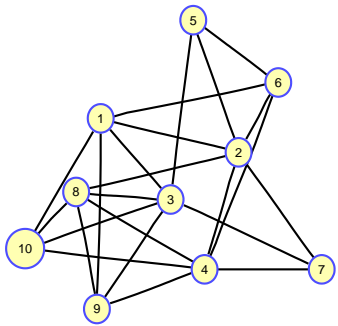}
\end{minipage}
\hfill
\caption{ The network used in the decentralized $L_0$ regularization problem.
}
\label{Fig:prox-dgd-network}
\end{figure}

We use several step-size strategies, including three fixed step sizes and two decreasing step sizes, to test Prox-DGD. The initializations in all cases are 0.
The experiment results are illustrated in Fig. {\ref{Fig:DL0LS}}.
By Fig. {\ref{Fig:DL0LS}}(a) and (b), when a fixed step size is adopted (obviously, we should assume that such step size is sufficiently small to satisfy the theoretical restriction),
a larger fixed step size generally implies a faster convergence rate (just converges to a stationary point of the related Lyapunov function minimization problem but not the original problem) while causes a larger consensus error.
These phenomena are reasonable and further verify the established results in Proposition \ref{propos:consensualbound}.
It can be also observed from Fig. \ref{Fig:DL0LS}(a) that Prox-DGD almost performs at linear rates on all the three fixed step sizes.
It is expected since the $L_0$ regularization function satisfies the Kurdyka-{\L}ojasiewicz (K{\L}) inequality with $\psi(s)=c_1 s^{1-\theta}$ for $\theta \in (0,1/2]$ (see \cite{Attouch2013}).
Thus, by Proposition \ref{Propos:conv-rate-prox-DGD}(ii), Prox-DGD has eventual linear convergence.
Once the initial guess (in this case, 0 may be a good initial guess) is good enough,
Prox-DGD starts decaying linearly starting from early iterations.
Similar phenomenon can be also observed by Fig. \ref{Fig:DL0LS}(c) and (d) under the decreasing step sizes.
Specifically, if $\alpha_t = {\cal O}(\frac{1}{t^{\epsilon}})$, a smaller $\epsilon$ (larger step size) generally implies a faster convergence rate but a slower consensus rate.
This indeed verifies our Proposition \ref{Propos:conv-rate-prox-DGD}, which states that a larger $\epsilon$ generally means a faster consensus rate.
Moreover, from Fig. \ref{Fig:DL0LS}(b) and (d),  under a fixed step size, the consensus error does not vanish but settles to a deterministic value, which can be overcome by using decreasing step sizes. This gives the main motivation of using decreasing step sizes. 


\begin{figure}[!t]
\begin{minipage}[b]{0.48\linewidth}
\centering
\includegraphics*[scale=0.3]{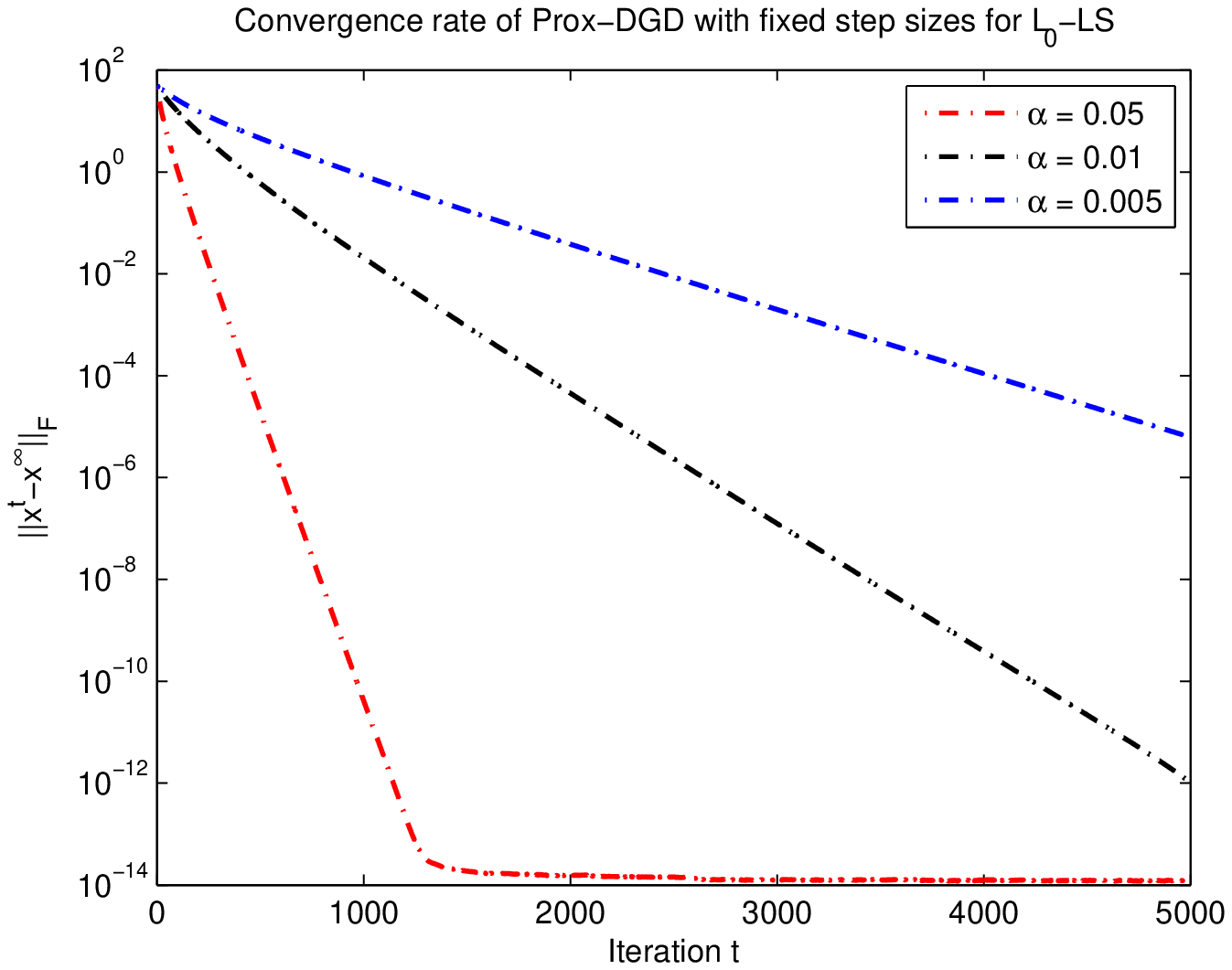}
\centerline{{\small (a) convergence rate (fixed)}}
\end{minipage}
\hfill
\begin{minipage}[b]{0.48\linewidth}
\centering
\includegraphics*[scale=0.3]{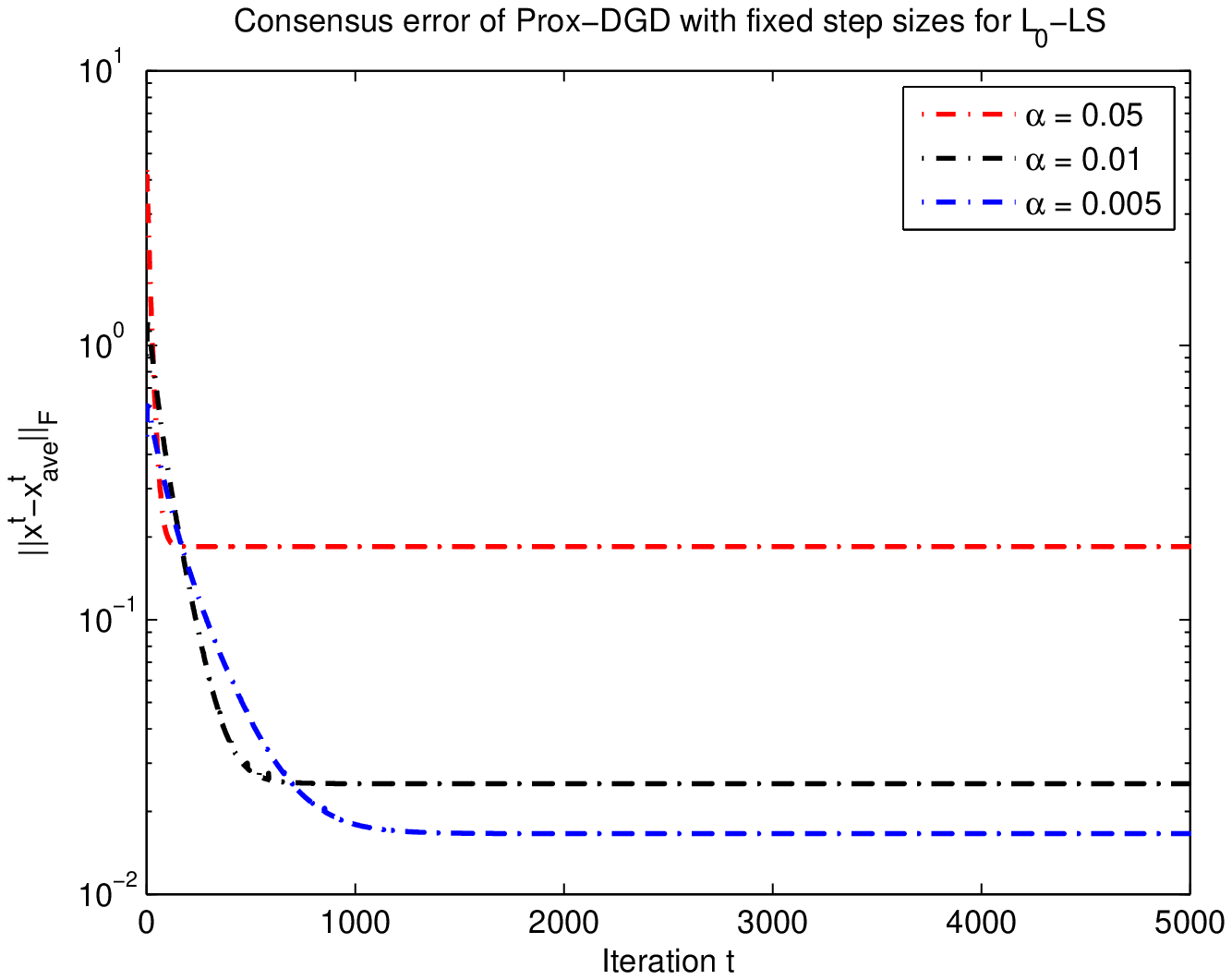}
\centerline{{\small (b) consensus rate (fixed)}}
\end{minipage}
\hfill
\begin{minipage}[b]{0.48\linewidth}
\centering
\vspace{.5cm}
\includegraphics*[scale=0.3]{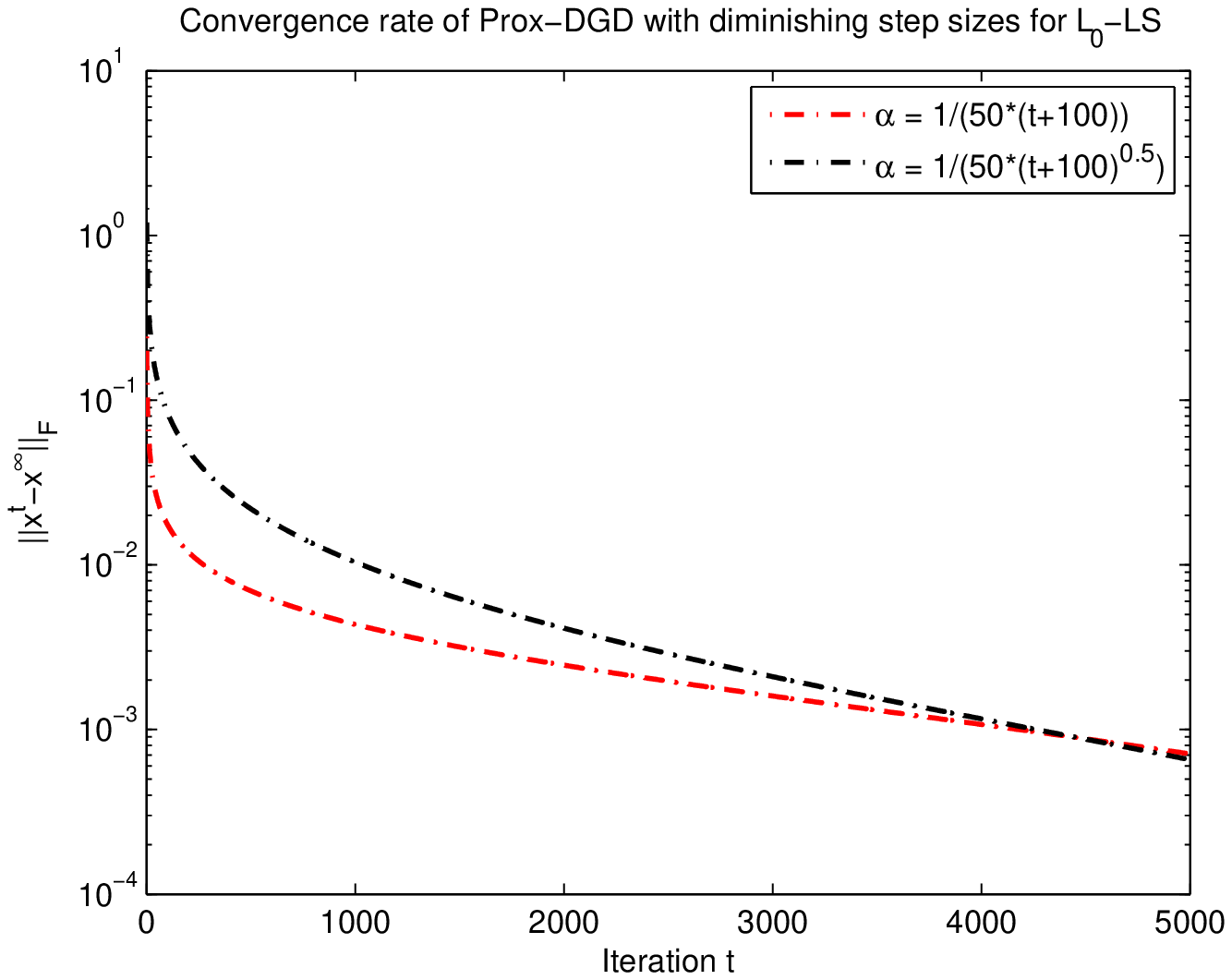}
\centerline{{\small (c) convergence rate (decreasing)}}
\end{minipage}
\hfill
\begin{minipage}[b]{0.48\linewidth}
\centering
\vspace{.5cm}
\includegraphics*[scale=0.3]{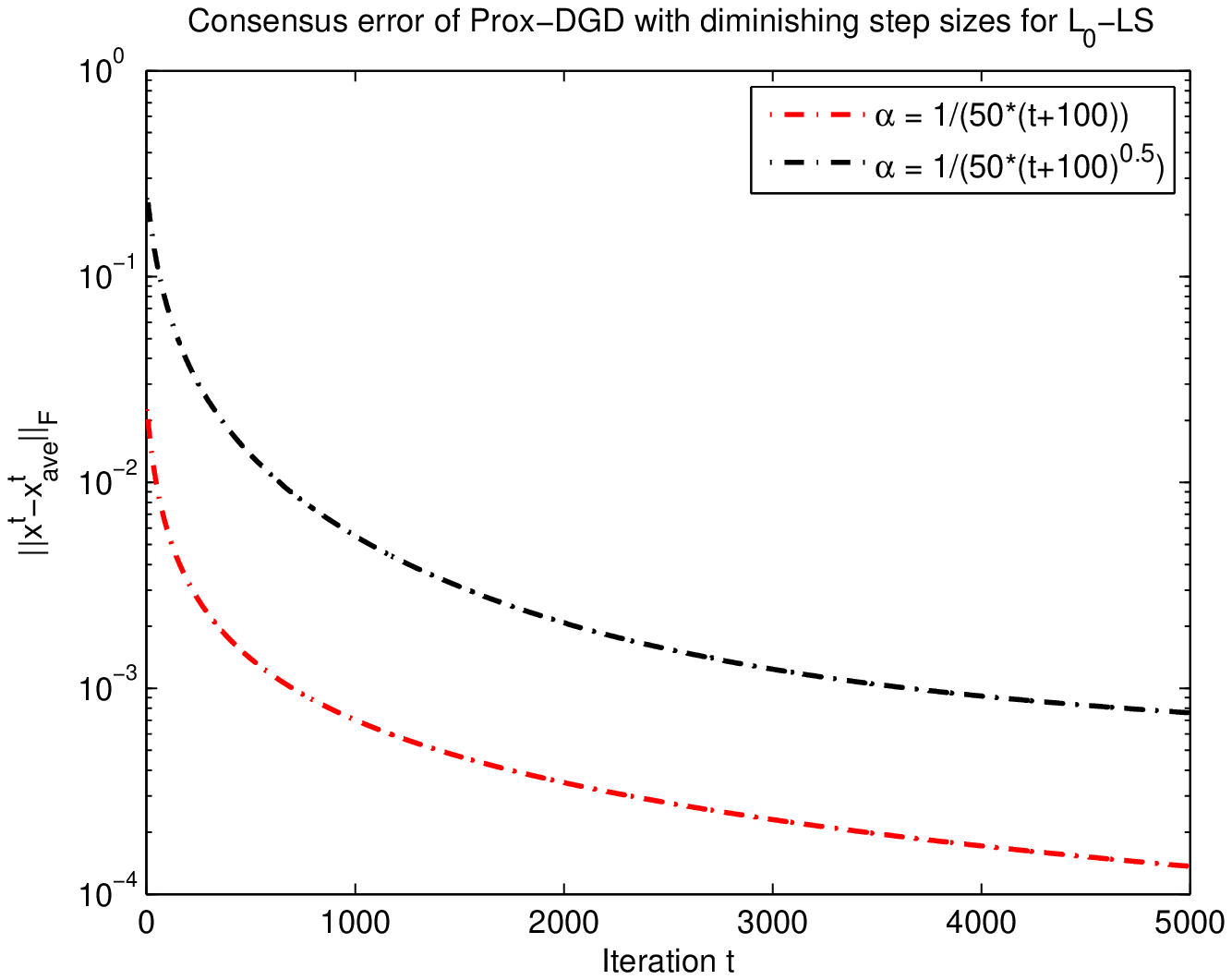}
\centerline{{\small (d) consensus rate (decreasing)}}
\end{minipage}
\hfill
\caption{ The convergence and consensus rates of Prox-DGD under different kinds of step sizes for decentralized $L_0$ regularization. In these experiments, we use the iterate $x^{20000}$ to replace the underlying $x^{\infty}$ used to compute the convergence rates. 
$x_{\mathrm{ave}}^t$ denotes the average of $n$ agents at $t$th iteration, i.e., $x_{\mathrm{ave}}^t = \frac{1}{n}{\bf 1}{\bf 1}^T x^t$.
}
\label{Fig:DL0LS}
\end{figure}

\section{Proofs}
\label{sc:proof}
In this section, we present the proofs of our main theorems and propositions.

\subsection{Proof for Theorem \ref{Thm:Globalconverg}}
The sketch of the proof is as follows: DGD is interpreted as the gradient descent algorithm applied to the  Lyapunov function ${\cal L}_{\alpha}$, following the argument in \cite{Yin-DGD2013}; then,  the properties of sufficient descent, lower boundedness, and bounded gradients are established for the sequence $\{{\cal L}_{\alpha}({\bf x}^k)\}$, giving subsequence convergence of the DGD iterates; finally, whole sequence convergence of the DGD iterates follows from the K{\L} property of ${\cal L}_{\alpha}$.

\begin{Lemm}[Gradient descent interpretation]
\label{Lemm:CentralizedGD}
The sequence $\{{\bf x}^k\}$ generated by the DGD iteration \eqref{Eq:DGD} is the same sequence generated by applying  gradient descent with the fixed step size $\alpha$ to the objective function  ${\cal L}_{\alpha}({\bf x})$. 
\end{Lemm}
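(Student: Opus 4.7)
The plan is to show that DGD is exactly the gradient descent iteration on ${\cal L}_{\alpha}$ with step size $\alpha$, by direct computation of $\nabla {\cal L}_{\alpha}$ and then substitution into the gradient descent update rule. This is essentially a one-line identification, but I will write out the two ingredients carefully.

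First, I would compute the gradient of the quadratic consensus penalty. Since $I-W$ is symmetric (by Assumption \ref{Assump:MixMat}(2)), the gradient of ${\bf x}\mapsto \tfrac{1}{2}\|{\bf x}\|_{I-W}^2 = \tfrac{1}{2}\langle {\bf x},(I-W){\bf x}\rangle$ with respect to ${\bf x}$ is $(I-W){\bf x}$. Combined with the gradient of ${\bf 1}^T{\bf f}({\bf x})$, which by the definition in \eqref{Eq:grad-f} is simply $\nabla {\bf f}({\bf x})$ (row $i$ equals $\nabla f_i({\bf x}_{(i)})^T$), I get
\begin{equation*}
\nabla {\cal L}_{\alpha}({\bf x}) = \nabla {\bf f}({\bf x}) + \frac{1}{\alpha}(I-W){\bf x}.
\end{equation*}

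Second, I would write out the gradient descent update with step size $\alpha$ applied to ${\cal L}_{\alpha}$:
\begin{equation*}
{\bf x}^{k+1} = {\bf x}^k - \alpha \nabla {\cal L}_{\alpha}({\bf x}^k) = {\bf x}^k - \alpha \nabla {\bf f}({\bf x}^k) - (I-W){\bf x}^k = W{\bf x}^k - \alpha \nabla {\bf f}({\bf x}^k),
\end{equation*}
which is exactly the DGD iteration \eqref{Eq:DGD} with fixed step size $\alpha_k\equiv\alpha$. Since the two recursions agree from any common ${\bf x}^0$, they generate identical sequences, proving the lemma.

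There is no real obstacle here; the only thing worth being careful about is that the Frobenius-norm-based semi-norm $\|{\bf x}\|_{I-W}^2 = \langle {\bf x},(I-W){\bf x}\rangle$ is being differentiated with respect to a matrix variable, so I would briefly note that symmetry of $I-W$ is what eliminates the factor of $2$ and gives the clean gradient $(I-W){\bf x}$, and that the definition of $\nabla {\bf f}({\bf x})$ in \eqref{Eq:grad-f} is consistent with this matrix calculus (row-by-row).
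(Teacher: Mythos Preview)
Your proposal is correct and is essentially identical to the paper's own justification: the paper simply rewrites the DGD update as ${\bf x}^{k+1}={\bf x}^k-\alpha(\nabla{\bf f}({\bf x}^k)+\alpha^{-1}(I-W){\bf x}^k)={\bf x}^k-\alpha\nabla{\cal L}_{\alpha}({\bf x}^k)$, which is exactly your computation. Your additional remarks on the symmetry of $I-W$ and the row-wise meaning of $\nabla{\bf f}({\bf x})$ are fine but not strictly needed.
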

A proof of this lemma is given in \cite{Yin-DGD2013}, and it is based on reformulating \eqref{Eq:DGD} as the iteration:
\begin{align}
\label{Eq:DGDequiv1}
{\bf x}^{k+1}
&= {\bf x}^k - \alpha(\nabla {\bf f}({\bf x}^k) + \alpha^{-1}(I-W){\bf x}^k) \nonumber\\
&={\bf x}^k - \alpha\nabla {\cal L}_{\alpha}({\bf x}^k).
\end{align}
Although the sequence $\{{\bf x}^k\}$ generated by the DGD iteration \eqref{Eq:DGD} can be interpreted as a centralized gradient descent sequence of function ${\cal L}_{\alpha}({\bf x})$, it is different to the gradient descent of the original problem \eqref{Eq:consensusProblem}.

\begin{Lemm}[Sufficient descent of $\{{\cal L}_{\alpha}({\bf x}^k)\}$]
\label{Lemm:Suffdescent}
Let Assumptions \ref{Assump:objective} and \ref{Assump:MixMat} hold. Set the step size $0<\alpha<\frac{1+\lambda_n(W)}{L_f}$. It holds that
\begin{align}
&{\cal L}_{\alpha}({\bf x}^{k+1}) \leq {\cal L}_{\alpha}({\bf x}^k) \label{Eq:Suffdescent}\\
&- \frac{1}{2}\big(\alpha^{-1}(1+\lambda_n(W))-L_f\big) \|{\bf x}^{k+1} - {\bf x}^k\|^2, \quad \forall k \in \mathbb{N}. \nonumber
\end{align}
\end{Lemm}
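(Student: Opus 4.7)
The plan is to view the lemma as a routine application of the standard descent inequality for gradient descent, once the correct Lipschitz constant of $\nabla {\cal L}_\alpha$ is identified. By the preceding Lemma \ref{Lemm:CentralizedGD}, iteration \eqref{Eq:DGD} is exactly ${\bf x}^{k+1} = {\bf x}^k - \alpha \nabla {\cal L}_\alpha({\bf x}^k)$, so ${\bf x}^{k+1}-{\bf x}^k = -\alpha \nabla {\cal L}_\alpha({\bf x}^k)$. This will let me convert any bound involving $\|\nabla {\cal L}_\alpha({\bf x}^k)\|^2$ into a bound involving $\|{\bf x}^{k+1}-{\bf x}^k\|^2$, which is the quantity appearing in the conclusion.

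First I would compute the smoothness constant of ${\cal L}_\alpha$. Writing $\nabla {\cal L}_\alpha({\bf x}) = \nabla {\bf f}({\bf x}) + \alpha^{-1}(I-W){\bf x}$, the map $\nabla {\bf f}$ is $L_f$-Lipschitz by Assumption \ref{Assump:objective}(1) (since $L_f = \max_i L_{f_i}$ via the block-diagonal structure in \eqref{Eq:grad-f}), while ${\bf x} \mapsto \alpha^{-1}(I-W){\bf x}$ is linear with operator norm $\alpha^{-1}\|I-W\|$. Under Assumption \ref{Assump:MixMat}(4), $I\succeq W\succ -I$, so the eigenvalues of $I-W$ lie in $[0,\,1-\lambda_n(W))\subset[0,2)$, and hence $\|I-W\| = 1-\lambda_n(W)$. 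Therefore ${\cal L}_\alpha$ is $L$-smooth with $L = L_f + \alpha^{-1}(1-\lambda_n(W))$.

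Next I would invoke the standard descent lemma for an $L$-smooth function under a gradient step with step size $\alpha$: starting from the quadratic upper bound
\[
{\cal L}_\alpha({\bf x}^{k+1}) \le {\cal L}_\alpha({\bf x}^k) + \langle \nabla {\cal L}_\alpha({\bf x}^k),\ {\bf x}^{k+1}-{\bf x}^k\rangle + \tfrac{L}{2}\|{\bf x}^{k+1}-{\bf x}^k\|^2,
\]
and substituting $\langle \nabla {\cal L}_\alpha({\bf x}^k),\ {\bf x}^{k+1}-{\bf x}^k\rangle = -\alpha^{-1}\|{\bf x}^{k+1}-{\bf x}^k\|^2$, I obtain
\[
{\cal L}_\alpha({\bf x}^{k+1}) \le {\cal L}_\alpha({\bf x}^k) - \bigl(\alpha^{-1}-\tfrac{L}{2}\bigr)\|{\bf x}^{k+1}-{\bf x}^k\|^2.
\]
Plugging in $L = L_f + \alpha^{-1}(1-\lambda_n(W))$ and simplifying gives
\[
\alpha^{-1}-\tfrac{L}{2} = \tfrac{1}{2}\bigl(\alpha^{-1}(1+\lambda_n(W)) - L_f\bigr),
\]
which is exactly the claimed descent constant \eqref{Eq:Suffdescent}. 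The step-size bound $\alpha < (1+\lambda_n(W))/L_f$ is precisely what makes this coefficient strictly positive, so the inequality is genuinely a \emph{sufficient} descent.

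I do not expect any real obstacle; the only subtlety is using the exact spectral bound $\|I-W\| = 1-\lambda_n(W)$ rather than the loose bound $2$, since otherwise the arithmetic would not recover the sharp coefficient in \eqref{Eq:Suffdescent}. An alternative route that avoids lumping the quadratic piece into a single Lipschitz constant would be to write ${\cal L}_\alpha = {\bf 1}^T{\bf f} + \tfrac{1}{2\alpha}\|\cdot\|_{I-W}^2$, apply the descent lemma to ${\bf 1}^T{\bf f}$ alone and an exact second-order expansion to the quadratic term, then bound $\|{\bf x}^{k+1}-{\bf x}^k\|_{I-W}^2 \le (1-\lambda_n(W))\|{\bf x}^{k+1}-{\bf x}^k\|^2$; this yields the same constant and may be useful later when the quadratic part is treated separately.
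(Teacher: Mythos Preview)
Your proposal is correct and follows essentially the same argument as the paper: compute the Lipschitz constant of $\nabla{\cal L}_\alpha$ as $L^* = L_f + \alpha^{-1}(1-\lambda_n(W))$, apply the quadratic upper bound, and substitute $\langle \nabla {\cal L}_\alpha({\bf x}^k), {\bf x}^{k+1}-{\bf x}^k\rangle = -\alpha^{-1}\|{\bf x}^{k+1}-{\bf x}^k\|^2$ from the gradient-descent interpretation to obtain the stated descent coefficient. The alternative route you sketch (treating the quadratic term exactly and bounding $\|\cdot\|_{I-W}^2$ separately) is not used here but is indeed the device the paper employs later for the decreasing-step-size and Prox-DGD analyses.
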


\begin{proof}
From ${\bf x}^{k+1} = {\bf x}^k - \alpha \nabla {\cal L}_{\alpha}({\bf x}^k)$, it follows that
\begin{align}
\label{Eq:Suffdescent1}
 \langle \nabla {\cal L}_{\alpha}({\bf x}^k), {\bf x}^{k+1} - {\bf x}^{k}\rangle
 =-\frac{\|{\bf x}^{k+1} - {\bf x}^{k}\|^2}{\alpha}.
\end{align}
Since   $\sum_{i=1}^n \nabla f_i({\bf x}_{(i)})$ is $L_f$-Lipschitz, $\nabla{\cal L}_{\alpha}$
is Lipschitz with the constant $L^* \triangleq L_f + \alpha^{-1}\lambda_{\max}(I-W) = L_f + \alpha^{-1}(1-\lambda_n(W))$, implying
\begin{align}
\label{Eq:Suffdescent2}
{\cal L}_{\alpha}({\bf x}^{k+1})
&\le {\cal L}_{\alpha}({\bf x}^k) + \langle \nabla {\cal L}_{\alpha}({\bf x}^k), {\bf x}^{k+1} - {\bf x}^{k}\rangle \nonumber\\
&+ \frac{L^*}{2} \|{\bf x}^{k+1} - {\bf x}^{k}\|^2.
\end{align}
Combining \eqref{Eq:Suffdescent1} and \eqref{Eq:Suffdescent2} yields \eqref{Eq:Suffdescent}.
\end{proof}
\begin{Lemm}[Boundedness]
\label{Lemm:Lowerbouded}
 Under Assumptions \ref{Assump:objective} and \ref{Assump:MixMat}, if $0<\alpha<\frac{1+\lambda_n(W)}{L_f}$, then the sequence $\{{\cal L}_{\alpha}({\bf x}^k)\}$ is lower bounded, and the sequence $\{{\bf x}^k\}$ is bounded, i.e., there exists a constant ${\cal B}>0$ such that $\|{\bf x}^k\|<{\cal B}$ for all $k$.
\end{Lemm}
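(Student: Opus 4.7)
The plan is to split the claim into two independent parts and handle them in turn. The lower boundedness of $\{\mathcal{L}_{\alpha}(\mathbf{x}^k)\}$ is in fact stronger: I would prove that $\mathcal{L}_{\alpha}(\mathbf{x})$ is lower bounded over all of $\mathbb{R}^{n\times p}$, and then specialize. The two ingredients are already available: Assumption \ref{Assump:MixMat}(4) gives $I - W \succeq 0$, so the semi-norm term $\frac{1}{2\alpha}\|\mathbf{x}\|_{I-W}^2$ is nonnegative; and Assumption \ref{Assump:objective}(2), together with the Lipschitz differentiability (hence continuity) from part (1), makes each $f_i$ proper, continuous, and coercive, therefore bounded below by some constant $m_i > -\infty$. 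Adding these up yields $\mathcal{L}_{\alpha}(\mathbf{x}) \ge \sum_{i=1}^n m_i$ for every $\mathbf{x}$, and in particular the sequence $\{\mathcal{L}_{\alpha}(\mathbf{x}^k)\}$ is lower bounded.

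For the boundedness of $\{\mathbf{x}^k\}$, I would chain Lemma \ref{Lemm:Suffdescent} with coercivity. The descent inequality gives $\mathcal{L}_{\alpha}(\mathbf{x}^k) \le \mathcal{L}_{\alpha}(\mathbf{x}^0)$ for all $k$. Since the semi-norm part is nonnegative, this implies $\sum_{i=1}^n f_i(\mathbf{x}_{(i)}^k) \le \mathcal{L}_{\alpha}(\mathbf{x}^0)$, and subtracting the lower bounds $m_j$ of the other summands gives, for each fixed $i$,
\[
f_i(\mathbf{x}_{(i)}^k) \;\le\; \mathcal{L}_{\alpha}(\mathbf{x}^0) - \sum_{j \ne i} m_j \;=:\; M_i, \qquad \forall k.
\]
By coercivity of $f_i$, the sublevel set $\{u : f_i(u) \le M_i\}$ is bounded, so there is $R_i$ with $\|\mathbf{x}_{(i)}^k\| \le R_i$ for every $k$. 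Taking $\mathcal{B} = \bigl(\sum_i R_i^2\bigr)^{1/2} + 1$ (to make the inequality strict as stated) produces the required uniform bound $\|\mathbf{x}^k\| < \mathcal{B}$, since $\|\mathbf{x}^k\|^2 = \sum_i \|\mathbf{x}_{(i)}^k\|^2$ in the Frobenius norm.

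No step here looks especially delicate; the only thing I would double-check is that the step size restriction $0 < \alpha < (1+\lambda_n(W))/L_f$ is used in exactly the right place, namely to invoke Lemma \ref{Lemm:Suffdescent} so that $\mathcal{L}_{\alpha}$ is monotone along the iterates. The restriction is not needed for the pointwise lower bound on $\mathcal{L}_{\alpha}$ itself, which is purely a property of the objective and the semi-definiteness of $I-W$. This separation is the main conceptual point of the proof.
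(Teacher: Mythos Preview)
Your proposal is correct and mirrors the paper's proof: both use nonnegativity of $\frac{1}{2\alpha}\|\mathbf{x}\|_{I-W}^2$ together with the lower boundedness of each $f_i$ (from properness and coercivity) for the first claim, and then invoke Lemma~\ref{Lemm:Suffdescent} to get $\mathcal{L}_{\alpha}(\mathbf{x}^k)\le\mathcal{L}_{\alpha}(\mathbf{x}^0)$ and conclude boundedness of $\{\mathbf{x}^k\}$ via coercivity. The only cosmetic difference is that the paper appeals directly to coercivity of the separable sum $\mathbf{1}^T\mathbf{f}(\mathbf{x})$ on $\mathbb{R}^{n\times p}$, whereas you unpack this into per-coordinate bounds $f_i(\mathbf{x}_{(i)}^k)\le M_i$; the arguments are equivalent.
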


\begin{proof}
The lower boundedness of ${\cal L}_{\alpha}({\bf x}^k)$ is due to  the lower boundedness of each $f_i$ as it is proper and coercive  (Assumption \ref{Assump:objective} Part (2)).

By Lemma \ref{Lemm:Suffdescent} and  the choice of $\alpha$, ${\cal L}_{\alpha}({\bf x}^k)$ is nonincreasing and upper bounded by ${\cal L}_{\alpha}({\bf x}^0)<+\infty$. Hence, ${\bf 1}^T {\bf f}({\bf x}^k) \leq {\cal L}_{\alpha}({\bf x}^0)$  implies that ${\bf x}^k$ is bounded due to the coercivity of ${\bf 1}^T{\bf f}({\bf x})$ (Assumption \ref{Assump:objective} Part (2)).
\end{proof}

From Lemmas \ref{Lemm:Suffdescent} and \ref{Lemm:Lowerbouded}, we immediately obtain the following lemma.

\begin{Lemm}[$\ell_2^2$-summable and asymptotic regularity\footnote{A sequence $\{a_k\}$ is said to be asymptotic regular if $\|a_{k+1}-a_k\| \rightarrow 0$ as $k\rightarrow \infty$.}]
\label{Lemm:Asympregular}
It holds that
$ \sum_{k=0}^{\infty} \|{\bf x}^{k+1}-{\bf x}^k\|^2 < +\infty$ and that $\|{\bf x}^{k+1}-{\bf x}^k\| \rightarrow 0$ as $k\rightarrow \infty.$
\end{Lemm}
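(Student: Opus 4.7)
The plan is to combine the two previous lemmas via a standard telescoping argument. By Lemma \ref{Lemm:Suffdescent}, setting the constant
\[
C \triangleq \tfrac{1}{2}\bigl(\alpha^{-1}(1+\lambda_n(W))-L_f\bigr),
\]
the choice $0<\alpha<\frac{1+\lambda_n(W)}{L_f}$ guarantees $C>0$, and the sufficient descent inequality reads
\[
C\,\|{\bf x}^{k+1}-{\bf x}^k\|^2 \;\le\; {\cal L}_{\alpha}({\bf x}^k) - {\cal L}_{\alpha}({\bf x}^{k+1}).
\]
Summing this from $k=0$ to $K-1$ telescopes the right-hand side into ${\cal L}_{\alpha}({\bf x}^0)-{\cal L}_{\alpha}({\bf x}^K)$.

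Next I invoke Lemma \ref{Lemm:Lowerbouded}, which supplies a uniform lower bound $\underline{{\cal L}}$ on the sequence $\{{\cal L}_{\alpha}({\bf x}^k)\}$. This yields
\[
C\sum_{k=0}^{K-1}\|{\bf x}^{k+1}-{\bf x}^k\|^2 \;\le\; {\cal L}_{\alpha}({\bf x}^0)-\underline{{\cal L}} \;<\;+\infty
\]
uniformly in $K$. Letting $K\to\infty$ and dividing by $C>0$ gives the first claim, $\sum_{k=0}^{\infty}\|{\bf x}^{k+1}-{\bf x}^k\|^2<+\infty$.

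The second claim follows immediately: the general term of a convergent nonnegative series must tend to zero, so $\|{\bf x}^{k+1}-{\bf x}^k\|^2\to 0$, hence $\|{\bf x}^{k+1}-{\bf x}^k\|\to 0$ as $k\to\infty$. There is no real obstacle here; the substantive work was already done in establishing sufficient descent (Lemma \ref{Lemm:Suffdescent}) and lower boundedness (Lemma \ref{Lemm:Lowerbouded}), and this lemma is just the routine telescoping consequence that harvests square-summability and asymptotic regularity from them.
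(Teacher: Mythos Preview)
Your proof is correct and matches the paper's approach exactly: the paper simply states that this lemma follows ``immediately'' from Lemmas~\ref{Lemm:Suffdescent} and~\ref{Lemm:Lowerbouded}, and your telescoping argument is precisely the routine computation that unpacks that word.
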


From \eqref{Eq:DGDequiv1}, the result below directly follows:
\begin{Lemm}[Gradient bound]
\label{Lemm:boundedgradient}
$\|\nabla {\cal L}_{\alpha}({\bf x}^{k})\|\le \alpha^{-1}\|{\bf x}^{k+1} - {\bf x}^k\|.$
\end{Lemm}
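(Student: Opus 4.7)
The plan is very short because this lemma is essentially a one-line consequence of the gradient-descent reinterpretation established in Lemma \ref{Lemm:CentralizedGD}. My approach is to simply rearrange the equivalent form \eqref{Eq:DGDequiv1} of the DGD update and take norms.

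Concretely, I would start from the identity
\begin{equation*}
{\bf x}^{k+1} = {\bf x}^k - \alpha\,\nabla {\cal L}_{\alpha}({\bf x}^k),
\end{equation*}
which is exactly \eqref{Eq:DGDequiv1}. Solving for the gradient gives
\begin{equation*}
\nabla {\cal L}_{\alpha}({\bf x}^k) = \alpha^{-1}({\bf x}^k - {\bf x}^{k+1}),
\end{equation*}
and then taking Frobenius (Euclidean) norms on both sides yields $\|\nabla {\cal L}_{\alpha}({\bf x}^{k})\| = \alpha^{-1}\|{\bf x}^{k+1} - {\bf x}^k\|$, which is even stronger than the inequality stated (since the step size $\alpha>0$, no sign issues arise in extracting the scalar).

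There is no real obstacle here; the only thing to be careful about is to invoke Lemma \ref{Lemm:CentralizedGD} (or equivalently the derivation \eqref{Eq:DGDequiv1}) explicitly so that the reader sees why $\alpha^{-1}({\bf x}^k-{\bf x}^{k+1})$ is exactly $\nabla{\cal L}_{\alpha}({\bf x}^k)$ and not just a proximal-type residual. This bound will later be used in combination with Lemma \ref{Lemm:Asympregular} to conclude that $\|\nabla {\cal L}_{\alpha}({\bf x}^k)\|\to 0$, and in conjunction with a running-best argument to obtain the $o(1/k)$ rate claimed in Theorem \ref{Thm:Globalconverg}.
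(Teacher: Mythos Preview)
Your proposal is correct and matches the paper's own approach exactly: the paper simply states that the lemma ``directly follows'' from \eqref{Eq:DGDequiv1}, which is precisely the rearrangement-and-norm argument you outline. There is nothing more to add.
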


Based on the above lemmas, we get the global convergence of DGD.
\begin{proof}[\bf Proof of Theorem \ref{Thm:Globalconverg}]
By Lemma \ref{Lemm:Lowerbouded}, the sequence $\{{\bf x}^k\}$ is bounded, so there exist a convergent subsequence and a limit point, denoted by  $\{{\bf x}^{k_s}\}_{s\in \mathbb{N}} \rightarrow {\bf x}^*$ as $s\rightarrow +\infty$. By Lemmas \ref{Lemm:Suffdescent} and \ref{Lemm:Lowerbouded}, ${\cal L}_{\alpha}({\bf x}^k)$ is monotonically nonincreasing and lower bounded, and therefore ${\cal L}_{\alpha}({\bf x}^k) \rightarrow {\cal L}^*$ for some ${\cal L}^*$ and $\|{\bf x}^{k+1}-{\bf x}^k\|\rightarrow 0$ as $k\rightarrow \infty$.
Based on Lemma \ref{Lemm:boundedgradient},  $\|\nabla {\cal L}_{\alpha}({\bf x}^{k})\|\rightarrow 0$ as $k\rightarrow \infty$.
In particular, $\|\nabla {\cal L}_{\alpha}({\bf x}^{k_s})\|\rightarrow 0$ as $s\to\infty$. Hence, we have $\nabla {\cal L}_{\alpha}({\bf x}^{*})=0$.

The running best rate of the sequence $\{\|{\bf x}^{k+1}-{\bf x}^k\|^2\}$ follows from \cite[Lemma 1.2]{Deng2013} or \cite[Theorem 3.3.1]{Knopp1956}. By Lemma \ref{Lemm:boundedgradient}, the running best rate of the sequence $\{\|\nabla {\cal L}_{\alpha}({\bf x}^{k})\|^2\}$ is $o(\frac{1}{k})$.

By \eqref{Def:L-alpha}, $\nabla {\cal L}_{\alpha}({\bf x}^{k}) = \nabla {\bf f}({\bf x}^k) + \alpha^{-1}(I-W){\bf x}^k$, which implies
$\frac{1}{n}{\bf 1}^T \nabla {\bf f}({\bf x}^k) = \frac{1}{n}{\bf 1}^T \nabla {\cal L}_{\alpha}({\bf x}^{k})$ due to $\frac{1}{n}{\bf 1}^T (I-W)=0$.
Thus,
\[
\|\frac{1}{n}{\bf 1}^T \nabla {\bf f}({\bf x}^k)\|^2 = \|\frac{1}{n}{\bf 1}^T \nabla {\cal L}_{\alpha}({\bf x}^{k})\|^2 \leq \|\nabla {\cal L}_{\alpha}({\bf x}^{k})\|^2,
\]
which implies the running best rate of $\{\|\frac{1}{n}{\bf 1}^T \nabla {\bf f}({\bf x}^k)\|^2\}$ is also $o(\frac{1}{k})$.

By Lemmas \ref{Lemm:Suffdescent} and \ref{Lemm:boundedgradient}, it holds
\begin{align*}
\|\nabla {\cal L}_{\alpha}({\bf x}^k)\|^2 \leq \frac{2}{\alpha(1+\lambda_n(W)-\alpha L_f)} ({\cal L}_{\alpha}({\bf x}^k) - {\cal L}_{\alpha}({\bf x}^{k+1})),
\end{align*}
which implies
\begin{align*}
\frac{1}{K} \sum_{k=0}^{K-1} \|\nabla {\cal L}_{\alpha}({\bf x}^k)\|^2 \leq \frac{2({\cal L}_{\alpha}({\bf x}^0) - {\cal L}^*)}{\alpha(1+\lambda_n(W)-\alpha L_f)K}.
\end{align*}
Moreover, note that $\|\frac{1}{n}{\bf 1}^T \nabla {\bf f}({\bf x}^k)\|^2 \leq \|\nabla {\cal L}_{\alpha}({\bf x}^{k})\|^2$.
Thus, the convergence rate of $\{\frac{1}{K} \sum_{k=0}^{K-1} \|\frac{1}{n}{\bf 1}^T \nabla {\bf f}({\bf x}^k)\|^2\}$ is ${\cal O}(\frac{1}{K})$.

Similar to \cite[Theorem 2.9]{Attouch2013}, we can claim the global convergence of the considered sequence $\{{\bf x}^k\}_{k\in \mathbb{N}}$ under the K{\L} assumption of ${\cal L}_{\alpha}$.
\end{proof}

Next, we  derive a bound on the gradient sequence $\{\nabla {\bf f}({\bf x}^k)\}$, which is used in Proposition \ref{propos:consensualbound}.

\begin{Lemm}
\label{Lemm:bound_grad_f}
Under Assumption \ref{Assump:objective}, there exists a point ${\bf y}^*$ satisfying $\nabla {\bf f}({\bf y}^*)=0$, and the following bound holds
\begin{align}
\label{Eq:bound_grad_f}
\|\nabla {\bf f}({\bf x}^k)\| \leq D \triangleq L_f({\cal B}+ \|{\bf y}^*\|), \quad \forall k\in \mathbb{N},
\end{align}
where ${\cal B}$ is the bound of $\|{\bf x}^k\|$ given in {Lemma} \ref{Lemm:Lowerbouded}.
\end{Lemm}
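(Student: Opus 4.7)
The plan is to construct $\mathbf{y}^*$ row by row from individual minimizers of each $f_i$, and then obtain the bound by combining the global Lipschitz continuity of $\nabla \mathbf{f}$ with the iterate boundedness already established in Lemma \ref{Lemm:Lowerbouded}.

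First, I would use Assumption \ref{Assump:objective} to produce the stationary point. Part (1) guarantees each $f_i$ is differentiable with Lipschitz gradient (and hence continuous), and Part (2) says $f_i$ is proper and coercive. The classical existence result for coercive lower semi-continuous functions therefore yields some $y_i^* \in \mathbb{R}^p$ that minimizes $f_i$, at which Fermat's rule gives $\nabla f_i(y_i^*) = 0$. I would then assemble $\mathbf{y}^* \in \mathbb{R}^{n\times p}$ whose $i$th row is $(y_i^*)^T$; by the block-structured definition of $\nabla \mathbf{f}$ in \eqref{Eq:grad-f}, the $i$th row of $\nabla \mathbf{f}(\mathbf{y}^*)$ equals $\nabla f_i(y_i^*)^T = 0$, so $\nabla \mathbf{f}(\mathbf{y}^*) = 0$.

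Next, I would invoke the fact (already recorded right after Assumption \ref{Assump:objective}) that $\mathbf{f}$ is $L_f$-Lipschitz differentiable as a map on $\mathbb{R}^{n\times p}$, with $L_f = \max_i L_{f_i}$. Since $\nabla \mathbf{f}(\mathbf{y}^*)=0$, this gives
\begin{align*}
\|\nabla \mathbf{f}(\mathbf{x}^k)\| = \|\nabla \mathbf{f}(\mathbf{x}^k) - \nabla \mathbf{f}(\mathbf{y}^*)\| \leq L_f \|\mathbf{x}^k - \mathbf{y}^*\|.
\end{align*}
Applying the triangle inequality and then the uniform bound $\|\mathbf{x}^k\| \leq \mathcal{B}$ furnished by Lemma \ref{Lemm:Lowerbouded} yields
\begin{align*}
\|\nabla \mathbf{f}(\mathbf{x}^k)\| \leq L_f (\|\mathbf{x}^k\| + \|\mathbf{y}^*\|) \leq L_f(\mathcal{B} + \|\mathbf{y}^*\|) = D,
\end{align*}
which is \eqref{Eq:bound_grad_f}. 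There is no serious obstacle here; the only mild care needed is to make sure the stationary point $\mathbf{y}^*$ is fixed independently of $k$ so that the constant $D$ is indeed universal, which is automatic from the construction above.
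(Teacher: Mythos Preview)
Your proof is correct and essentially matches the paper's argument. The only cosmetic difference is that the paper obtains $\mathbf{y}^*$ as a minimizer of the aggregated function $\mathbf{1}^T\mathbf{f}(\mathbf{y})=\sum_{i=1}^n f_i(\mathbf{y}_{(i)})$, whereas you build it row by row from minimizers of each $f_i$; since the sum is separable in the rows, the two constructions coincide, and the remaining Lipschitz plus triangle-inequality step is identical.
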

\begin{proof}
By the lower boundedness assumption (Assumption \ref{Assump:objective} Part (2)), the minimizer of ${\bf 1}^T{\bf f}({\bf y})$ exists. Let ${\bf y}^*$ be a minimizer. Then by Lipschitz differentiability of each $f_i$ (Assumption \ref{Assump:objective} Part (1)), we have that $\nabla {\bf f}({\bf y}^*)=0$.

Then, for any $k$, we have
\begin{align*}
\|\nabla {\bf f}({\bf x}^k)\| &= \|\nabla {\bf f}({\bf x}^k) - \nabla {\bf f}({\bf y}^*)\| \leq L_f \|{\bf x}^k - {\bf y}^*\|\\
(\text{Lemma}\  \ref{Lemm:Lowerbouded}) \quad &\leq L_f({\cal B}+ \|{\bf y}^*\|).
\end{align*}
Therefore, we have proven this lemma.
\end{proof}

\subsection{Proof for Proposition \ref{Propos:conv-rate}}
\begin{proof}
Note that
 \begin{align*}
 \|\nabla {\cal L}_{\alpha}({\bf x}^{k+1})\|
 &\leq \|\nabla {\cal L}_{\alpha}({\bf x}^{k+1})-\nabla {\cal L}_{\alpha}({\bf x}^{k})\| + \|\nabla {\cal L}_{\alpha}({\bf x}^{k})\|\\
 &\leq L^* \|{\bf x}^{k+1} - {\bf x}^k\|+\alpha^{-1}\|{\bf x}^{k+1} - {\bf x}^k\| \\
 & = (\alpha^{-1}(2-\lambda_n(W))+L_f)\|{\bf x}^{k+1} - {\bf x}^k\|,
 \end{align*}
where the second inequality holds for Lemma \ref{Lemm:boundedgradient} and the Lipschitz continuity of $\nabla {\cal L}_{\alpha}$ with constant $L^*=L_f + \alpha^{-1}(1-\lambda_n(W))$. Thus, it shows that $\{{\bf x}^k\}$ satisfies the so-called relative error condition as list in {\cite{Attouch2013}}.
Moreover, by Lemmas \ref{Lemm:Suffdescent} and \ref{Lemm:Lowerbouded},
$\{{\bf x}^k\}$ also satisfies the so-called sufficient decrease and continuity conditions as listed in {\cite{Attouch2013}}.
Under such three conditions and the K{\L} property of ${\cal L}_{\alpha}$ at ${\bf x}^*$ with $\psi(s) = cs^{1-\theta}$,  following the proof of {\cite[Lemma 2.6]{Attouch2013}}, there exists $k_0>0$ such that for all $k \geq k_0$, we have
\begin{align}
&2\|{\bf x}^{k+1}-{\bf x}^k\| \leq \|{\bf x}^{k}-{\bf x}^{k-1}\|+\frac{cb}{a}\times   \label{Eq:keyineq1}\\
&\big(({\cal L}_{\alpha}({\bf x}^k)-{\cal L}_{\alpha}({\bf x}^*))^{1-\theta}-({\cal L}_{\alpha}({\bf x}^{k+1})-{\cal L}_{\alpha}({\bf x}^*))^{1-\theta}\big),\nonumber
\end{align}
where $a \triangleq \frac{1}{2}(\alpha^{-1}(1+\lambda_n(W))-L_f)$ and $b \triangleq \alpha^{-1}(2-\lambda_n(W))+L_f$.
Then, an easy induction yields
\begin{align*}
&\sum_{t=k_0}^k \|{\bf x}^{t+1} - {\bf x}^t\| \leq \|{\bf x}^{k_0} - {\bf x}^{k_0-1}\|+\frac{cb}{a} \times\\
&\big(({\cal L}_{\alpha}({\bf x}^{k_0})-{\cal L}_{\alpha}({\bf x}^*))^{1-\theta} - ({\cal L}_{\alpha}({\bf x}^{k+1})-{\cal L}_{\alpha}({\bf x}^*))^{1-\theta}\big).
\end{align*}
Following a derivation similar to the proof of {\cite[Theorem 5]{Attouch-Bolte2009}}, we can estimate the rate of convergence of $\{{\bf x}^k\}$ in the different cases of $\theta$.
\end{proof}

\subsection{Proof for Proposition \ref{Propos:asympconsensus}}

In order to prove Proposition \ref{Propos:asympconsensus}, we also need the following lemmas.


\begin{Lemm}(\cite[Proposition 1]{Nedic-Subgradient2009})
\label{Lemm:Wk}
Let $W^k \triangleq \overbrace{W\cdots W}^k$ be the power of $W$ with degree $k$ for any $k\in \mathbb{N}$. Under Assumption \ref{Assump:MixMat},  it holds
\begin{align}
\label{Eq:Wk}
\|W^k - \frac{1}{n}{\bf 1}{\bf 1}^T\| \leq C \zeta^k
\end{align}
for some constant $C>0,$ where $\zeta$ is the second largest magnitude eigenvalue of $W$ as specified in \eqref{Eq:zeta}.
\end{Lemm}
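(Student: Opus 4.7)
The plan is to diagonalize $W$ and exploit the orthogonal invariance of the Frobenius norm (the paper's default matrix norm). By Assumption \ref{Assump:MixMat}, $W$ is symmetric, so the spectral theorem yields an orthogonal $Q=[q_1,\ldots,q_n]$ and a diagonal $\Lambda=\mathrm{diag}(\lambda_1(W),\ldots,\lambda_n(W))$ with $W=Q\Lambda Q^T$. Parts (3)--(4) of the assumption ensure that $\lambda_1(W)=1$ is a simple eigenvalue whose eigenspace is exactly $\mathrm{span}\{\bf 1\}$, while every remaining eigenvalue lies in $(-1,1)$. I can therefore choose $q_1=\tfrac{1}{\sqrt{n}}{\bf 1}$, which identifies
\[
\tfrac{1}{n}{\bf 1}{\bf 1}^T \;=\; q_1 q_1^T \;=\; Q\,(e_1 e_1^T)\,Q^T
\]
as the spectral projector onto the top eigenspace.

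Next I would raise to the $k$-th power. Since $W^k=Q\Lambda^k Q^T$, a direct subtraction gives
\[
W^k - \tfrac{1}{n}{\bf 1}{\bf 1}^T \;=\; Q\,\mathrm{diag}\bigl(0,\,\lambda_2(W)^k,\,\ldots,\,\lambda_n(W)^k\bigr)\,Q^T.
\]
Because the Frobenius norm is invariant under orthogonal conjugation, this yields
\[
\Bigl\|W^k - \tfrac{1}{n}{\bf 1}{\bf 1}^T\Bigr\| \;=\; \sqrt{\sum_{i=2}^{n}\lambda_i(W)^{2k}} \;\leq\; \sqrt{n-1}\,\zeta^k,
\]
using the definition $\zeta=\max\{|\lambda_2(W)|,|\lambda_n(W)|\}$ from \eqref{Eq:zeta}. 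Setting $C=\sqrt{n-1}$ completes the bound.

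There is really no substantive obstacle; the entire argument hinges on recognizing $\tfrac{1}{n}{\bf 1}{\bf 1}^T$ as the orthogonal projector onto the one-dimensional eigenspace associated with $\lambda_1(W)=1$, which is immediate from the null-space property. A slicker alternative that avoids an explicit eigenbasis would be to verify the identities $W\cdot\tfrac{1}{n}{\bf 1}{\bf 1}^T=\tfrac{1}{n}{\bf 1}{\bf 1}^T$ and $\bigl(\tfrac{1}{n}{\bf 1}{\bf 1}^T\bigr)^2=\tfrac{1}{n}{\bf 1}{\bf 1}^T$, from which an easy induction gives $W^k-\tfrac{1}{n}{\bf 1}{\bf 1}^T=(W-\tfrac{1}{n}{\bf 1}{\bf 1}^T)^k$; the symmetric matrix on the right has spectral radius $\zeta$, and equivalence of matrix norms in finite dimension supplies the Frobenius estimate up to a dimension-dependent factor absorbed into $C$.
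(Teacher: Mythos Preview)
Your argument is correct. The paper itself does not supply a proof of this lemma; it simply quotes it from \cite[Proposition~1]{Nedic-Subgradient2009}, so there is no in-paper derivation to compare against. Your spectral-decomposition route is the standard one: Assumption~\ref{Assump:MixMat} guarantees that $W$ is symmetric with a simple eigenvalue $1$ and eigenvector $\tfrac{1}{\sqrt n}{\bf 1}$, so $\tfrac{1}{n}{\bf 1}{\bf 1}^T$ is exactly the orthogonal projector onto that eigenspace, and the Frobenius norm of the difference reduces to $\sqrt{\sum_{i\ge 2}\lambda_i(W)^{2k}}\le \sqrt{n-1}\,\zeta^k$. The explicit constant $C=\sqrt{n-1}$ you obtain is in fact sharper than what a bare citation promises. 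Your alternative via $(W-\tfrac{1}{n}{\bf 1}{\bf 1}^T)^k$ is equally valid and arguably cleaner.
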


\begin{Lemm}(\cite[Lemma 3.1]{Ram-DSSP2010})
\label{Lemm:convolution}
Let $\{\gamma_k\}$ be a scalar sequence.
If $\lim_{k\rightarrow \infty} \gamma_k = \gamma$ and $0<\beta<1$, then $\lim_{k\rightarrow \infty} \sum_{l=0}^k \beta^{k-l}\gamma_l = \frac{\gamma}{1-\beta}$.
%
\end{Lemm}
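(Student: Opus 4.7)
The plan is to prove this convolution-limit fact by the standard $\varepsilon$-$N$ decomposition, splitting the sum into an initial segment that vanishes because of the geometric weight $\beta^{k-l}$ and a tail that is close to the Ces\`aro-like limit $\gamma/(1-\beta)$ because $\gamma_l$ is close to $\gamma$ there. Since $\{\gamma_k\}$ is convergent, it is bounded; let $M \triangleq \sup_{l\ge 0}|\gamma_l|<\infty$. Given any $\varepsilon>0$, choose $N$ such that $|\gamma_l-\gamma|<\varepsilon$ for every $l\ge N$.

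For $k\ge N$, I would split
\begin{equation*}
S_k \triangleq \sum_{l=0}^{k}\beta^{k-l}\gamma_l = \underbrace{\sum_{l=0}^{N-1}\beta^{k-l}\gamma_l}_{A_k} + \underbrace{\sum_{l=N}^{k}\beta^{k-l}(\gamma_l-\gamma)}_{B_k} + \underbrace{\gamma\sum_{l=N}^{k}\beta^{k-l}}_{C_k},
\end{equation*}
and bound the three pieces separately. For $A_k$, the factor $\beta^{k-l}$ with $l\le N-1$ satisfies $\beta^{k-l}\le \beta^{k-N+1}$, hence $|A_k|\le M N\beta^{k-N+1}\to 0$ as $k\to\infty$ for fixed $N$. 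For $B_k$, using $|\gamma_l-\gamma|<\varepsilon$ for $l\ge N$ and summing the geometric series gives
\begin{equation*}
|B_k|\le \varepsilon\sum_{l=N}^{k}\beta^{k-l} = \varepsilon\sum_{j=0}^{k-N}\beta^{j} \le \frac{\varepsilon}{1-\beta}.
\end{equation*}
For $C_k$, the reindexed geometric sum $\sum_{j=0}^{k-N}\beta^{j}$ converges to $1/(1-\beta)$ as $k\to\infty$, so $C_k\to \gamma/(1-\beta)$.

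Combining these three bounds, for any $\varepsilon>0$,
\begin{equation*}
\limsup_{k\to\infty}\bigl|S_k - \tfrac{\gamma}{1-\beta}\bigr| \le 0 + \tfrac{\varepsilon}{1-\beta} + 0 = \tfrac{\varepsilon}{1-\beta}.
\end{equation*}
Letting $\varepsilon\downarrow 0$ yields $S_k\to \gamma/(1-\beta)$, which is the claim. There is no real obstacle here: the proof is a textbook exercise, and the only care needed is to fix $N$ first (so that $A_k\to 0$ with $k$ alone) before letting $\varepsilon$ shrink at the end. The proposal follows the standard structure used for the discrete analog of Ces\`aro/Abel summation with geometric weights.
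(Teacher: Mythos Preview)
Your proof is correct and entirely standard. The paper itself does not supply a proof of this lemma; it simply cites it as \cite[Lemma 3.1]{Ram-DSSP2010}, so there is no in-paper argument to compare against. Your $\varepsilon$-$N$ splitting with the three pieces $A_k$, $B_k$, $C_k$ is exactly the textbook route, and nothing is missing.
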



\begin{proof}[\bf Proof of Proposition \ref{Propos:asympconsensus}]
By the recursion \eqref{Eq:recursion-xk}, note that
\begin{align}
\label{Eq:asympconsens1}
{\bf x}^{k} - {\bar{\bf x}}^{k} =&~ (W^{k}-\frac{1}{n}{\bf 1}{\bf 1}^T){\bf x}^0\\
& - \sum_{j=0}^{k-1} \alpha_j (W^{k-1-j}-\frac{1}{n}{\bf 1}{\bf 1}^T)\nabla {\bf f}({\bf x}^j).\nonumber
\end{align}
Further by Lemma \ref{Lemm:Wk} and Assumption \ref{Assump:boundedgradient}, we obtain
\begin{align}
\label{Eq:asympconsens2}
&\|{\bf x}^{k} - {\bar{\bf x}}^{k}\| \leq \|(W^{k}-\frac{1}{n}{\bf 1}{\bf 1}^T)\| \|{\bf x}^0\|\nonumber\\
& + \sum_{j=0}^{k-1} \alpha_j \|W^{k-1-j}-\frac{1}{n}{\bf 1}{\bf 1}^T\|\cdot \|\nabla {\bf f}({\bf x}^j)\|\nonumber\\
&\leq C \left(\|{\bf x}^0\|\zeta^{k}+B \sum_{j=0}^{k-1}\alpha_j \zeta^{k-1-j}\right).
\end{align}
Furthermore, by Lemma \ref{Lemm:convolution} and step sizes \eqref{eq:decralpha}, we get $\lim_{k\rightarrow \infty} \|{\bf x}^{k} - {\bar{\bf x}}^{k}\| =0.$

Let $b_k \triangleq (k+1)^{-\epsilon}$. To show the rate of $\|{\bf x}^{k} - {\bar{\bf x}}^{k}\|$, we only need to show that
\[\lim_{k\rightarrow \infty}b_k^{-1}\|{\bf x}^k - {\bar {\bf x}}^k\| \leq C^*\]
for some $0<C^*<\infty$. Let $j_k' \triangleq [k-1+2\log_{\zeta}(b_k^{-1})]$ (where $[x]$ denotes the integer part of $x$ for any $x\in \mathbb{R}$).
Note that
\begin{align}
\label{Eq:alphak-xk}
&b_k^{-1}\|{\bf x}^k - {\bar {\bf x}}^k\|\nonumber\\
& \leq C b_k^{-1}\left(\|{\bf x}^0\|\zeta^k+B\sum_{j=0}^{k-1}\alpha_j\zeta^{k-1-j}\right) \nonumber\\
&= C\|{\bf x}^0\| b_k^{-1}\zeta^{k}
+CB b_k^{-1} \sum_{j=0}^{j_k'}\alpha_j\zeta^{k-1-j} \nonumber\\
&+CB b_k^{-1} \sum_{j=j_k'+1}^{k-1}\alpha_j\zeta^{k-1-j} \nonumber\\
&\triangleq T_1 +T_2 +T_3,
\end{align}
where the first inequality holds because of \eqref{Eq:asympconsens2}.

In the following, we will estimate the above three terms in the right-hand side of \eqref{Eq:alphak-xk}, respectively.
First, by the definition of $j_k'$, for any $j\leq j_k'$, we have
\[
b_k^{-1} \zeta^{\frac{k-1-j}{2}} \leq b_k^{-1} \zeta^{\frac{k-1-j_k'}{2}} \leq 1.
\]
Thus,
\begin{align}
\label{Eq:alphak-xk-part2}
T_2 \leq CB\sum_{j=0}^{j_k'}\alpha_j\zeta^{(k-1-j)/2}.
\end{align}
Second, for $j_k'<j\leq k-1$,
\begin{align*}
&b_k^{-1}\alpha_j \leq \frac{(k+1)^{\epsilon}}{L_f(j_k'+1)^{\epsilon}} \leq \frac{(k+1)^{\epsilon}}{L_f(k-1+2\epsilon \log_{\zeta}(k+1))^{\epsilon}},
\end{align*}
and also
\begin{align*}
&b_k^{-1}\alpha_j \geq  \frac{(k+1)^{\epsilon}}{L_f(k+1)^{\epsilon}} = \frac{1}{L_f},
\end{align*}
Thus, for any $j_k'<j\leq k-1$
\begin{align}
\label{Eq:bk4-part1}
\lim_{k\rightarrow \infty} b_k^{-1}\alpha_j = \frac{1}{L_f }.
\end{align}
Furthermore, note that
\begin{align}
\label{Eq:bk4-part2}
& \lim_{k\rightarrow \infty} b_k^{-1}\zeta^{k/2}=0.
\end{align}
Therefore, there exists a $k^{*}$ such that for $k\geq k^{*}$
\begin{align}
& b_k^{-1}\alpha_j \leq \frac{2}{L_f}, \label{Eq:cond1-k**}\\
& b_k^{-1}\zeta^{k/2}\leq 1 \label{Eq:cond2-k**}.
\end{align}
The above two inequalities imply that for sufficiently large $k$,
\begin{align}
& T_1 \leq C \|{\bf x}^0\|\zeta^{k/2}, \label{Eq:alphak-xk-part1}\\
& T_3 \leq \frac{2CB}{L_f}\sum_{j=j_k'+1}^{k-1} \zeta^{k-1-j}. \label{Eq:alphak-xk-part3}
\end{align}
From \eqref{Eq:alphak-xk-part2}, \eqref{Eq:alphak-xk-part1} and \eqref{Eq:alphak-xk-part3}, we get
\begin{align}
\label{Eq:alphak-xk1}
&b_k^{-1}\|{\bf x}^k - {\bar {\bf x}}^k\| \leq C \|{\bf x}^0\|\zeta^{k/2}\\
& + CB \left(\sum_{j=0}^{j_k'}\alpha_j\zeta^{(k-1-j)/2}+ \frac{2}{L_f}\sum_{j=j_k'+1}^{k-1} \zeta^{k-1-j}\right). \nonumber
\end{align}
By Lemma \ref{Lemm:convolution} and \eqref{Eq:alphak-xk1}, there exists a $C^*>0$ such that
\begin{align}
\label{Eq:alphak-xk2}
\lim_{k\rightarrow \infty} b_k^{-1}\|{\bf x}^k - {\bar {\bf x}}^k\| \leq C^*.
\end{align}
We have completed the proof of this proposition.
\end{proof}

%

%

\subsection{Proof for Theorem \ref{Thm:Lalphak}}

To prove Theorem \ref{Thm:Lalphak}, we first note that
similar to \eqref{Eq:DGDequiv1}, the DGD iterates under decreasing step sizes can be rewritten as
\begin{align}
\label{Eq:DGDequiv-decreas}
{\bf x}^{k+1}
={\bf x}^k - \alpha_k\nabla {\cal L}_{\alpha_k}({\bf x}^k),
\end{align}
where ${\cal L}_{\alpha_k}({\bf x}) = {\bf 1}^T{\bf f}({\bf x}) + \frac{1}{2\alpha_k} \|{\bf x}\|_{I-W}^2$,
and we also need the following lemmas.

\begin{Lemm}[\cite{Robbins-supermartingale1971}]
\label{Lemm:supermartingale}
Let $\{v_t\}$ be a nonnegative scalar sequence such that
\[
v_{t+1} \leq (1+b_t)v_t - u_t +c_t
\]
for all $t\in \mathbb{N}$, where $b_t \geq 0$, $u_t\geq 0$ and $c_t\geq 0$ with $\sum_{t=0}^{\infty} b_t <\infty$ and $\sum_{t=0}^{\infty} c_t <\infty$. Then the sequence $\{v_t\}$ converges to some $v\geq 0$ and $\sum_{t=0}^{\infty} u_t <\infty.$
\end{Lemm}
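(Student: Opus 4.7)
The plan is to reduce the recursion to an ``almost monotone'' one by rescaling $v_t$ by the running product of the $(1+b_s)$ factors, and then to apply an elementary monotone-convergence argument. I would not expect any substantial obstacle here; the work is entirely bookkeeping with telescoping sums.

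First, I would exploit the hypothesis $\sum_{t=0}^{\infty} b_t <\infty$ to form $B_t \triangleq \prod_{s=0}^{t-1}(1+b_s)$ (with $B_0 \triangleq 1$). Since $\log(1+b_s)\le b_s$, the sequence $\{B_t\}$ is nondecreasing and bounded, so it converges to some $B_\infty\in[1,\infty)$; in particular $1\le B_t\le B_\infty$ for every $t$. Dividing the assumed inequality $v_{t+1}\le(1+b_t)v_t-u_t+c_t$ by $B_{t+1}=(1+b_t)B_t$ and setting $w_t\triangleq v_t/B_t$, $\tilde u_t \triangleq u_t/B_{t+1}$, $\tilde c_t\triangleq c_t/B_{t+1}$, I obtain the cleaner recursion
\begin{equation*}
w_{t+1}\le w_t-\tilde u_t+\tilde c_t,\qquad w_t\ge 0,\ \tilde u_t\ge 0,\ \tilde c_t\ge 0.
\end{equation*}
Note that $\tilde c_t\le c_t$, so $\sum_t\tilde c_t<\infty$.

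Next, summing this recursion from $0$ to $t$ and using $w_{t+1}\ge 0$ gives
\begin{equation*}
\sum_{s=0}^{t}\tilde u_s \le w_0+\sum_{s=0}^{t}\tilde c_s \le w_0+\sum_{s=0}^{\infty}\tilde c_s<\infty,
\end{equation*}
so $\sum_{s=0}^{\infty}\tilde u_s<\infty$. Because $B_{s+1}\le B_\infty$, this immediately yields $\sum_{s=0}^{\infty}u_s\le B_\infty\sum_{s=0}^{\infty}\tilde u_s<\infty$, which is the second claim.

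For convergence of $\{v_t\}$, I would introduce the auxiliary sequence $W_t\triangleq w_t+\sum_{s=t}^{\infty}\tilde c_s$, which is well-defined and nonnegative since $\sum\tilde c_s<\infty$. From $w_{t+1}\le w_t+\tilde c_t$ one checks
\begin{equation*}
W_{t+1}=w_{t+1}+\sum_{s=t+1}^{\infty}\tilde c_s\le w_t+\sum_{s=t}^{\infty}\tilde c_s=W_t,
\end{equation*}
so $\{W_t\}$ is monotone nonincreasing and bounded below by $0$, hence converges to some $W_\infty\ge 0$. Since the tail $\sum_{s=t}^{\infty}\tilde c_s\to 0$, we conclude $w_t\to W_\infty$, and therefore $v_t=B_t w_t\to B_\infty W_\infty \triangleq v\ge 0$, establishing the first claim.
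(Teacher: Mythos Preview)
Your argument is correct: the rescaling $w_t=v_t/B_t$ with $B_t=\prod_{s<t}(1+b_s)$ eliminates the multiplicative factor, after which telescoping gives $\sum u_t<\infty$ and the shifted sequence $W_t=w_t+\sum_{s\ge t}\tilde c_s$ is monotone. This is the standard deterministic proof of the Robbins--Siegmund lemma.

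The paper itself does not prove this lemma at all; it simply cites \cite{Robbins-supermartingale1971} and uses the result as a black box. So there is nothing to compare on the level of argument---you have supplied a self-contained proof where the paper relies on a reference.
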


\begin{Lemm}
\label{Lemm:alphak}
Let $\alpha_k$ satisfy \eqref{eq:decralpha}. Then it holds
\begin{align*}
\alpha_{k+1}^{-1} - \alpha_k^{-1} \leq 2\epsilon L_f(k+1)^{\epsilon-1}.
\end{align*}
\end{Lemm}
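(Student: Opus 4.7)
The plan is to reduce the stated bound to a one-variable calculus estimate. Writing $\alpha_k^{-1} = L_f(k+1)^{\epsilon}$ directly from the definition in \eqref{eq:decralpha}, the quantity to control is
\[
\alpha_{k+1}^{-1} - \alpha_k^{-1} = L_f\bigl[(k+2)^{\epsilon} - (k+1)^{\epsilon}\bigr],
\]
so it suffices to show $(k+2)^{\epsilon} - (k+1)^{\epsilon} \le 2\epsilon(k+1)^{\epsilon-1}$ for every $k \in \mathbb{N}$ and every $\epsilon \in (0,1]$.

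My first approach would be the mean value theorem applied to $t \mapsto t^{\epsilon}$ on the interval $[k+1, k+2]$: there exists $\xi \in (k+1, k+2)$ with $(k+2)^{\epsilon} - (k+1)^{\epsilon} = \epsilon\, \xi^{\epsilon-1}$. Since $\epsilon - 1 \le 0$, the map $t \mapsto t^{\epsilon-1}$ is nonincreasing on $(0,\infty)$, so $\xi^{\epsilon-1} \le (k+1)^{\epsilon-1}$, which already gives the sharper bound $\epsilon(k+1)^{\epsilon-1}$. The factor $2$ in the statement is thus harmless slack. A purely algebraic alternative is the concavity inequality $(1+x)^{\epsilon} \le 1 + \epsilon x$, valid for $x \ge 0$ and $\epsilon \in (0,1]$, applied with $x = 1/(k+1)$:
\[
(k+2)^{\epsilon} = (k+1)^{\epsilon}\bigl(1 + \tfrac{1}{k+1}\bigr)^{\epsilon} \le (k+1)^{\epsilon} + \epsilon(k+1)^{\epsilon-1}.
\]

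There is no substantive obstacle here; the only point to keep an eye on is the hypothesis $\epsilon \in (0,1]$, which is precisely what makes $t^{\epsilon-1}$ nonincreasing (for the mean value argument) and the Bernoulli-type bound hold (for the algebraic argument). Either route delivers the inequality in one line, and the looser constant $2\epsilon L_f$ is presumably chosen only to match the form of later estimates that invoke this lemma.
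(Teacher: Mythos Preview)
Your proposal is correct. Your algebraic alternative is essentially the paper's approach: the paper also factors out $(k+1)^{\epsilon}$ and bounds $(1+\tfrac{1}{k+1})^{\epsilon}-1$, though it proves the slightly weaker inequality $(1+x)^{\epsilon}-1\le 2\epsilon x$ on $[0,1]$ by a monotonicity argument on $g(x)=(1+x)^{\epsilon}-1-2\epsilon x$, whereas you invoke the sharper Bernoulli/concavity bound $(1+x)^{\epsilon}\le 1+\epsilon x$ directly. Your mean-value-theorem route is a genuinely different (and slightly cleaner) one-line alternative that avoids the factorization entirely and, like your second argument, yields the tighter constant $\epsilon$ rather than $2\epsilon$; as you note, the extra factor of $2$ in the lemma is slack.
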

\begin{proof}
We first prove that
\begin{align}
\label{Eq:taylor}
(1+x)^{\epsilon}-1 \leq 2\epsilon x, \quad \forall x\in [0,1].
\end{align}
Let $g(x) = (1+x)^{\epsilon}-1 - 2\epsilon x$. Then its derivative
\[
g'(x) = \epsilon(1+x)^{\epsilon-1}-2\epsilon<0, \quad \forall x\in [0,1].
\]
It implies $g(x) \leq g(0)=0$ for any $x\in [0,1]$, that is, the inequality \eqref{Eq:taylor} holds.

Note that
\begin{align}
\label{Eq:diff-alphak}
\alpha_{k+1}^{-1} - \alpha_k^{-1}
&= L_f\big((k+2)^{\epsilon} - (k+1)^{\epsilon}\big) \nonumber\\
&= L_f(k+1)^{\epsilon}\big((1+\frac{1}{k+1})^{\epsilon}-1\big) \nonumber\\
&\leq 2\epsilon L_f(k+1)^{\epsilon-1},
\end{align}
where the last inequality holds for \eqref{Eq:taylor}.
\end{proof}

The following lemma shows that $\{(\alpha_{k+1}^{-1}-\alpha_k^{-1})\|{\bf x}^{k+1}\|_{I-W}^2\}$ is summable.
\begin{Lemm}
\label{Lemm:bk}
 Let Assumptions \ref{Assump:objective}, \ref{Assump:MixMat}, and \ref{Assump:boundedgradient} 
hold. In DGD, use step sizes $\alpha_k$ in \eqref{eq:decralpha}. Then $\{(\alpha_{k+1}^{-1}-\alpha_k^{-1})\|{\bf x}^{k+1}\|_{I-W}^2\}$ is summable, i.e., $\sum_{k=0}^{\infty} (\alpha_{k+1}^{-1}-\alpha_k^{-1})\|{\bf x}^{k+1}\|_{I-W}^2 < \infty.$
\end{Lemm}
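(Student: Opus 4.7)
The plan is to combine Lemma \ref{Lemm:alphak}, which controls the growth of the inverse step sizes, with Corollary \ref{Coro:conv-decreasingstepsize}, which controls the decay of $\|{\bf x}^{k+1}\|_{I-W}^2$, and then compare the resulting bound to a convergent $p$-series.

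First, by Lemma \ref{Lemm:alphak} we have
\[
\alpha_{k+1}^{-1}-\alpha_k^{-1}\le 2\epsilon L_f (k+1)^{\epsilon-1}.
\]
Second, since $(I-W){\bf 1}=0$, $\|{\bf x}^{k+1}\|_{I-W}^2=\|{\bf x}^{k+1}-\bar{\bf x}^{k+1}\|_{I-W}^2\le\lambda_{\max}(I-W)\,\|{\bf x}^{k+1}-\bar{\bf x}^{k+1}\|^2$, and Corollary \ref{Coro:conv-decreasingstepsize} (which follows from Proposition \ref{Propos:asympconsensus} using step sizes \eqref{eq:decralpha}) gives $\|{\bf x}^{k+1}\|_{I-W}^2={\cal O}(1/(k+2)^{2\epsilon})$. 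Hence there exists a constant $M>0$ such that
\[
\|{\bf x}^{k+1}\|_{I-W}^2\le M(k+1)^{-2\epsilon}\qquad\text{for all }k\ge 0.
\]
Multiplying the two bounds yields
\[
(\alpha_{k+1}^{-1}-\alpha_k^{-1})\|{\bf x}^{k+1}\|_{I-W}^2\le 2\epsilon L_f M\,(k+1)^{-1-\epsilon},
\]
and since $\epsilon>0$ the series $\sum_{k=0}^\infty (k+1)^{-1-\epsilon}$ converges by the integral test, giving the claim.

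There is no substantive obstacle here; the only care needed is to ensure that Proposition \ref{Propos:asympconsensus} is applicable (its hypotheses, Assumptions \ref{Assump:MixMat} and \ref{Assump:boundedgradient}, are already assumed in the lemma), and that the bound on $\alpha_{k+1}^{-1}-\alpha_k^{-1}$ from Lemma \ref{Lemm:alphak} is combined with the \emph{squared} consensus-error rate ${\cal O}((k+1)^{-2\epsilon})$ rather than the rate ${\cal O}((k+1)^{-\epsilon})$ of $\|{\bf x}^{k+1}-\bar{\bf x}^{k+1}\|$ itself, so that the exponents add to $1+\epsilon>1$.
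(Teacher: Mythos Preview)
Your proof is correct and follows essentially the same approach as the paper: both combine the bound $\alpha_{k+1}^{-1}-\alpha_k^{-1}\le 2\epsilon L_f(k+1)^{\epsilon-1}$ from Lemma~\ref{Lemm:alphak} with the consensus rate $\|{\bf x}^{k+1}-\bar{\bf x}^{k+1}\|^2={\cal O}((k+1)^{-2\epsilon})$ from Proposition~\ref{Propos:asympconsensus} (you cite its Corollary~\ref{Coro:conv-decreasingstepsize} directly) to obtain a term of order $(k+1)^{-1-\epsilon}$, which is summable. The only cosmetic difference is that the paper writes out $\lambda_{\max}(I-W)=1-\lambda_n(W)$ explicitly.
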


\begin{proof}
Note that
\begin{align}
\label{Eq:xk+1-I-W}
\|{\bf x}^{k+1}\|_{I-W}^2
&= \|{\bf x}^{k+1} - {\bar {\bf x}}^{k+1}\|_{I-W}^2 \nonumber\\
&\leq (1-\lambda_n(W))\|{\bf x}^{k+1} - {\bar {\bf x}}^{k+1}\|^2.
\end{align}
By Lemma \ref{Lemm:alphak},
\begin{align}
\label{Eq:bk1}
&(\alpha_{k+1}^{-1}-\alpha_k^{-1})\|{\bf x}^{k+1}\|_{I-W}^2 \nonumber\\
&\leq 2\epsilon L_f(k+1)^{\epsilon-1}\|{\bf x}^{k+1}\|_{I-W}^2 \nonumber\\
&\leq 2\epsilon L_f(k+1)^{\epsilon-1}(1-\lambda_n(W))\|{\bf x}^{k+1} - {\bar {\bf x}}^{k+1}\|^2.
\end{align}
Furthermore, by \eqref{Eq:bk1} and Proposition \ref{Propos:asympconsensus}, the sequence $\{(\alpha_{k+1}^{-1}-\alpha_k^{-1})\|{\bf x}^{k+1}\|_{I-W}^2\}$ converges to 0 at the rate of ${\cal O}(1/(k+1)^{1+\epsilon})$, which implies that the sequence $\{(\alpha_{k+1}^{-1}-\alpha_k^{-1})\|{\bf x}^{k+1}\|_{I-W}^2\}$ is $\ell_1$-summable, i.e.,  $\sum_{k=0}^{\infty} (\alpha_{k+1}^{-1}-\alpha_k^{-1})\|{\bf x}^{k+1}\|_{I-W}^2 <\infty$.
\end{proof}

\begin{Lemm}[convergence of weakly summable sequence]
\label{Lemm:betak}
Let $\{\beta_k\}$ and $\{\gamma_k\}$ be two nonnegative scalar sequences such that
\begin{enumerate}
\item[(a)]
$\gamma_k = \frac{1}{(k+1)^{\epsilon}}$, for some $\epsilon \in (0,1]$, $k\in \mathbb{N}$;

\item[(b)]
$\sum_{k=0}^{\infty}\gamma_k\beta_k <\infty$;

\item[(c)]
$|\beta_{k+1}-\beta_k| \lesssim \gamma_k,$
\end{enumerate}
where ``$\lesssim$''means that $|\beta_{k+1}-\beta_k| \le M \gamma_k$ for some constant $M>0$, then $\lim_{k\rightarrow \infty}\beta_k \rightarrow 0$.
\end{Lemm}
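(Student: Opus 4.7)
The plan is to argue by contradiction. Suppose $\beta_k\not\to 0$. Since $\{\beta_k\}$ is nonnegative, this means $L := \limsup_{k\to\infty}\beta_k > 0$, so there exists an increasing subsequence $\{k_n\}$ with $k_n\to\infty$ and $\beta_{k_n} \ge L/2$ for all $n$. The idea is to exploit condition (c) together with the divergence of $\sum_k\gamma_k$ (which follows from $\epsilon\in(0,1]$ in (a)) to construct, around each $k_n$, an interval on which $\beta_j$ stays bounded below and contributes a fixed amount of mass to the sum $\sum_k\gamma_k\beta_k$, thereby contradicting (b).

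Concretely, using the bound in (c), write $M$ for the constant so that $|\beta_{k+1}-\beta_k|\le M\gamma_k$. For each $n$, I would choose $N_n$ to be the largest nonnegative integer for which
\begin{equation*}
M\sum_{j=k_n}^{k_n+N_n-1}\gamma_j \;\le\; L/4.
\end{equation*}
Such $N_n$ exists and is finite because $\gamma_j\to 0$ (so $N_n\ge 1$ eventually) and $\sum_j\gamma_j=\infty$ (so the partial sums do eventually exceed $L/(4M)$). By telescoping, for every $j\in[k_n,k_n+N_n]$,
\begin{equation*}
|\beta_j-\beta_{k_n}| \;\le\; M\sum_{i=k_n}^{j-1}\gamma_i \;\le\; L/4,
\end{equation*}
hence $\beta_j\ge L/2-L/4 = L/4$ throughout the interval. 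Moreover, by the maximality of $N_n$,
\begin{equation*}
\sum_{j=k_n}^{k_n+N_n}\gamma_j \;>\; L/(4M).
\end{equation*}

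Consequently, the contribution of the block $[k_n,k_n+N_n]$ to $\sum_k\gamma_k\beta_k$ is at least $(L/4)\cdot L/(4M)=L^2/(16M)$. After passing to a further subsequence to ensure the intervals $[k_n,k_n+N_n]$ are pairwise disjoint (which is possible since $k_n\to\infty$ while each $N_n$ is finite), summing over $n$ yields $\sum_k\gamma_k\beta_k = \infty$, contradicting hypothesis (b). Therefore $L=0$ and $\beta_k\to 0$.

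The step I expect to be the main obstacle, and where care is needed, is the choice of the interval length $N_n$: one needs $N_n$ large enough that $\sum_{j\in I_n}\gamma_j$ is bounded below by a positive constant (uniform in $n$), yet small enough that the accumulated drift $M\sum_{j\in I_n}\gamma_j$ does not exceed $L/4$. This balance is precisely what requires both $\gamma_k\to 0$ and $\sum_k\gamma_k=\infty$, and this is the only point where assumption (a) is used — any nonnegative sequence with these two properties would suffice.
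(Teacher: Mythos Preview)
Your argument is correct and follows the same high-level strategy as the paper's proof: assume $\limsup_k\beta_k=C^*>0$, locate indices where $\beta_k\ge C^*/2$, and use condition (c) to show $\beta_j$ stays above $C^*/4$ on a surrounding interval whose $\gamma$-mass is bounded below, contradicting (b).

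The implementation differs in one useful way. The paper fixes the interval length explicitly as $k'=\big[\tfrac{C^*}{4M}(k+1)^{\epsilon}\big]$ and then computes $\sum_{j=k}^{k+k'}\gamma_j$ via integral estimates, treating the cases $\epsilon\in(0,1)$ and $\epsilon=1$ separately. Your choice of $N_n$ as the \emph{maximal} integer with $M\sum_{j=k_n}^{k_n+N_n-1}\gamma_j\le L/4$ extracts both the drift bound and the lower bound $\sum_{j=k_n}^{k_n+N_n}\gamma_j>L/(4M)$ simultaneously, with no calculus and no case split. As you note, this uses only $\gamma_k\to 0$ and $\sum_k\gamma_k=\infty$, so your proof in fact establishes the lemma for any nonnegative, nonsummable, vanishing weight sequence, not just $\gamma_k=(k+1)^{-\epsilon}$. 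The paper's version trades this generality for a concrete formula for the interval length.
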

We call a sequence $\{\beta_k\}$ satisfying Lemma \ref{Lemm:betak} (a) and (b) a \textit{weakly summable} sequence since itself is not necessarily summable but becomes summable via multiplying another non-summable, diminishing sequence $\{\gamma_k\}$. It is generally impossible to claim that $\beta_k$ converges to 0. However, if the distance of two successive steps of $\{\beta_k\}$ with the same order of the multiplied sequence $\gamma_k$, then we can claim the convergence of $\beta_k$. A special case with $\epsilon=1/2$ has been observed in \cite{Chow2016}.

\begin{proof}
By condition (b),
we have
\begin{align}
\label{Eq:cauchyseq}
\sum_{i=k}^{k+k'} \gamma_i \beta_i \rightarrow 0,
\end{align}
as $k\rightarrow \infty$ and for any $k' \in \mathbb{N}$.

In the following, we will show $\lim_{k\rightarrow \infty} \beta_k =0$ by contradiction. Assume this is not the case, i.e., $\beta_k \nrightarrow 0$ as $k\rightarrow \infty$, then $\mathop{\limsup}_{k\rightarrow \infty} \beta_k \triangleq C^*>0.$ Thus, for every $N>k_0$, there exists a $k>N$ such that $\beta_k > \frac{C^*}{2}$. Let
\[k' \triangleq \left[\frac{C^*}{4M}(k+1)^{\epsilon}\right],\]
where $[x]$ denotes the integer part of $x$ for any $x\in \mathbb{R}$. By condition (c), i.e., $|\beta_{j+1}-\beta_j| \le M \gamma_j$ for any $j\in \mathbb{N}$, then
\begin{align}
\label{Eq:betak+i}
\beta_{k+i} \ge \frac{C^*}{4}, \quad \forall i\in \{0,1,\ldots,k'\}.
\end{align}
Hence,
\begin{align}
&\sum_{j=k}^{k+k'} \gamma_j \beta_j \geq \frac{C^*}{4}\sum_{j=k}^{k+k'} \gamma_j \geq \frac{C^*}{4}\int_{k}^{k+k'}(x+1)^{-\epsilon}dx \label{Eq:cauchyseq1}\\
&=
\left\{
\begin{array}{ll}
\frac{C^*}{4(1-\epsilon)}\left((k+k'+1)^{1-\epsilon} - (k+1)^{1-\epsilon}\right), & \epsilon \in (0,1),\\
\frac{C^*}{4}\left(\ln(k+k'+1)-\ln(k+1)\right), & \epsilon=1.
\end{array}%
\right.
\nonumber
\end{align}
Note that when $\epsilon \in (0,1)$, the term $(k+k'+1)^{1-\epsilon} - (k+1)^{1-\epsilon}$ is monotonically increasing with respect to $k$, which implies that $\sum_{j=k}^{k+k'} \gamma_j \beta_j$ is lower bounded by a positive constant when $\epsilon \in (0,1)$. While when $\epsilon=1$, noting that the specific form of $k'$, we have
\[\ln(k+k'+1)-\ln(k+1) = \ln\left(1+\frac{k'}{k+1}\right)=\ln\left(1+\frac{C^*}{4M}\right),\]
 which is a positive constant. As a consequence, $\sum_{j=k}^{k+k'} \gamma_j \beta_j$ will not go to $0$ as $k\rightarrow 0$, which contradicts with \eqref{Eq:cauchyseq}. Therefore, $\lim_{k\rightarrow \infty} \beta_k =0.$
\end{proof}

\begin{proof}[\bf Proof of Theorem \ref{Thm:Lalphak}]
We first develop the following inequality
\begin{align}
\label{Eq:Suffdescent-decrease}
&{\cal L}_{\alpha_{k+1}}({\bf x}^{k+1}) \leq {\cal L}_{\alpha_k}({\bf x}^k) + \frac{1}{2}(\alpha_{k+1}^{-1}-\alpha_k^{-1})\|{\bf x}^{k+1}\|^2_{I-W}\nonumber\\
&- \frac{1}{2}\big(\alpha_k^{-1}(1+\lambda_n(W))-L_f\big)\|{\bf x}^{k+1}-{\bf x}^k\|^2,
\end{align}
and then claim the convergence of the sequences $\{{\cal L}_{\alpha_k}({\bf x}^k)\}$, $\{{\bf 1}^T {\bf f}({\bf x}^k)\}$ and $\{{\bf x}^k\}$ based on this inequality.

\textbf{(a) Development of \eqref{Eq:Suffdescent-decrease}:}
From ${\bf x}^{k+1} = {\bf x}^k - \alpha_{k} \nabla {\cal L}_{\alpha_k}({\bf x}^k)$, it follows that
\begin{align}
\label{Eq:Suffdescent-decrease1}
 \langle \nabla {\cal L}_{\alpha_{k}}({\bf x}^k), {\bf x}^{k+1} - {\bf x}^{k}\rangle
 =-\frac{\|{\bf x}^{k+1} - {\bf x}^{k}\|^2}{\alpha_k}.
\end{align}
Since  $\sum_{i=1}^n \nabla f_i({\bf x}_{(i)})$ is $L_f$-Lipschitz, $\nabla{\cal L}_{\alpha_k}$
is Lipschitz with the constant $L_k \triangleq L_f + \alpha_k^{-1}\lambda_{\max}(I-W) = L_f + \alpha_k^{-1}(1-\lambda_n(W))$, implying
\begin{align}
\label{Eq:Suffdescent-decrease2}
&{\cal L}_{\alpha_k}({\bf x}^{k+1})\\
&\le {\cal L}_{\alpha_k}({\bf x}^k) + \langle \nabla {\cal L}_{\alpha_k}({\bf x}^k), {\bf x}^{k+1} - {\bf x}^{k}\rangle
+ \frac{L_k}{2} \|{\bf x}^{k+1} - {\bf x}^{k}\|^2 \nonumber\\
&= {\cal L}_{\alpha_k}({\bf x}^k) - \frac{1}{2}\big(\alpha_k^{-1}(1+\lambda_n(W))-L_f\big)\|{\bf x}^{k+1}-{\bf x}^k\|^2. \nonumber
\end{align}
Moreover,
\begin{align}
\label{Eq:Suffdescent-decrease3}
&{\cal L}_{\alpha_{k+1}}({\bf x}^{k+1}) \nonumber\\
&= {\cal L}_{\alpha_k}({\bf x}^{k+1}) + \frac{1}{2}(\alpha_{k+1}^{-1}-\alpha_k^{-1})\|{\bf x}^{k+1}\|^2_{I-W}.
\end{align}
Combining \eqref{Eq:Suffdescent-decrease2} and \eqref{Eq:Suffdescent-decrease3} yields \eqref{Eq:Suffdescent-decrease}.

\textbf{(b) Convergence of objective sequence:}
By Lemma \ref{Lemm:bk} and Lemma \ref{Lemm:supermartingale}, \eqref{Eq:Suffdescent-decrease} yields the convergence of $\{{\cal L}_{\alpha_k}({\bf x}^k)\}$ and
\begin{align}
\label{Eq:l2-sum}
\sum_{k=0}^{\infty} \big(\alpha_k^{-1}(1+\lambda_n(W))-L_f\big)\|{\bf x}^{k+1}-{\bf x}^k\|^2 <\infty
\end{align}
which implies that $\|{\bf x}^{k+1}-{\bf x}^k\|^2$ converges to 0 at the rate of $o(k^{-\epsilon})$ and $\{{\bf x}^k\}$ is asymptotic regular¡£
Moreover, notice that
\begin{align*}
\alpha_k^{-1}\|{\bf x}^{k}\|^2_{I-W}
&= \alpha_k^{-1}\|{\bf x}^{k} - {\bar {\bf x}}^{k}\|^2_{I-W} \\
&\leq(1-\lambda_n(W))L_f(k+1)^{\epsilon}\|{\bf x}^{k} - {\bar {\bf x}}^{k}\|^2.
\end{align*}
By Proposition \ref{Propos:asympconsensus}, the term $\alpha_k^{-1}\|{\bf x}^{k}\|^2_{I-W}$ converges to 0 as $k\rightarrow \infty$.
As a consequence,
\begin{align*}
\lim_{k\rightarrow \infty} {\bf 1}^T{\bf f}({\bf x}^k)
&= \lim_{k\rightarrow \infty} \left({\cal L}_{\alpha_k}({\bf x}^k) - \frac{\|{\bf x}^{k}\|^2_{I-W}}{2\alpha_k}\right) \\
&= \lim_{k\rightarrow \infty} {\cal L}_{\alpha_k}({\bf x}^k).
\end{align*}

\textbf{(c) Convergence to a stationary point:}
Let $\bar{\nabla}{\bf f}({\bf x}^k) \triangleq \frac{1}{n}{\bf 1}{\bf 1}^T\nabla {\bf f}({\bf x}^k)$.
By the specific form \eqref{eq:decralpha} of $\alpha_k$, we have
\begin{align}
\label{Eq:alpha-1}
&\alpha_k^{-1}(1+\lambda_n(W))-L_f \nonumber\\
&= \alpha_k^{-1}(1+\lambda_n(W)-L_f \alpha_k) \nonumber\\
&\geq \alpha_k^{-1}\left(1+\lambda_n(W) - \frac{1}{(k_0+1)^{\epsilon}}\right)
\end{align}
for all $k>k_0$, where $k_0 = \left[(1+\lambda_n(W))^{-\frac{1}{\epsilon}}\right]$, i.e., the integer part of $(1+\lambda_n(W))^{-\frac{1}{\epsilon}}$.
Note that
\begin{align}
\label{Eq:avgx-x}
\|\bar{\bf x}^{k+1} - \bar{\bf x}^{k}\|
&= \|\frac{1}{n}{\bf 1}{\bf 1}^T ({\bf x}^{k+1} - {\bf x}^{k})\| \nonumber\\
&\leq \|{\bf x}^{k+1} - {\bf x}^{k}\|.
\end{align}
Thus, \eqref{Eq:l2-sum}, \eqref{Eq:alpha-1} and \eqref{Eq:avgx-x} yield
\begin{align}
\label{Eq:l2-sum-avg}
\sum_{k=0}^{\infty} \alpha_k^{-1}\|\bar{{\bf x}}^{k+1}-\bar{{\bf x}}^k\|^2 <\infty.
\end{align}
By the iterate \eqref{Eq:DGD} of DGD, we have
\begin{align}
\label{Eq:avg-iter}
\bar{{\bf x}}^{k+1}-\bar{{\bf x}}^k = -\alpha_k \bar{\nabla}{\bf f}({\bf x}^k).
\end{align}
Plugging \eqref{Eq:avg-iter} into \eqref{Eq:l2-sum-avg} yields
\begin{align}
\label{Eq:l2-sum-grad}
\sum_{k=0}^{\infty} \alpha_k \|\bar{\nabla}{\bf f}({\bf x}^k)\|^2 < \infty.
\end{align}
Moreover,
\begin{align}
&|\|{\bar \nabla}{\bf f}({\bf x}^{k+1})\|^2-\|{\bar \nabla}{\bf f}({\bf x}^k)\|^2| \nonumber\\
&\leq \|{\bar \nabla}{\bf f}({\bf x}^{k+1})-{\bar \nabla}{\bf f}({\bf x}^{k})\| \cdot (\|{\bar \nabla}{\bf f}({\bf x}^{k+1})\|+\|{\bar \nabla}{\bf f}({\bf x}^{k})\|) \nonumber\\
&\leq 2B\|{\bar \nabla}{\bf f}({\bf x}^{k+1})-{\bar \nabla}{\bf f}({\bf x}^{k})\| \nonumber\\
&\leq 2B\|{\nabla}{\bf f}({\bf x}^{k+1})-{\nabla}{\bf f}({\bf x}^{k})\| \nonumber\\
&\leq 2BL_f \|{\bf x}^{k+1}-{\bf x}^{k}\|, \label{Eq:diff-betak}
\end{align}
where the second inequality holds by the bounded gradient assumption (Assumption \ref{Assump:boundedgradient}), the third inequality holds by the specific form of ${\bar \nabla}{\bf f}({\bf x}^{k})$, and the last inequality holds by the Lipschitz continuity of ${\nabla}{\bf f}$.
Note that
\begin{align}
\label{Eq:xk+1-xk}
&\|{\bf x}^{k+1}-{\bf x}^{k}\| \nonumber\\
&= \|{\bf x}^{k+1}-\bar{\bf x}^{k+1}+\bar{\bf x}^{k+1}-\bar{\bf x}^{k}+\bar{\bf x}^{k}-{\bf x}^{k}\| \nonumber\\
&\leq \|{\bf x}^{k+1}-\bar{\bf x}^{k+1}\| + \|\bar{\bf x}^{k}-{\bf x}^{k}\| + \alpha_k \|\bar{\nabla}{\bf f}({\bf x}^k)\| \nonumber\\
&\lesssim \alpha_k,
\end{align}
where the first inequality holds for the triangle inequality and \eqref{Eq:avg-iter}, and the last inequality holds for Proposition \ref{Propos:asympconsensus} and the bounded assumption of $\nabla {\bf f}$.
Thus, \eqref{Eq:diff-betak} and \eqref{Eq:xk+1-xk} imply
\begin{align}
\label{Eq:diff-betak1}
|\|{\bar \nabla}{\bf f}({\bf x}^{k+1})\|^2-\|{\bar \nabla}{\bf f}({\bf x}^k)\|^2|\lesssim \alpha_k.
\end{align}
By the specific form \eqref{eq:decralpha} of $\alpha_k$, \eqref{Eq:l2-sum-grad}, \eqref{Eq:diff-betak1} and Lemma \ref{Lemm:betak}, it holds
\begin{align}
\label{Eq:conv-sumofgrad}
\lim_{k\rightarrow \infty} \|{\bar \nabla}{\bf f}({\bf x}^k)\|^2 =0.
\end{align}
As a consequence,
\begin{align}
\label{Eq:conv-sumofgrad1}
\lim_{k\rightarrow \infty} {\bf 1}^T{\nabla}{\bf f}({\bf x}^k) =0.
\end{align}


Furthermore, by the coercivity of $f_i$ for each $i$ and the convergence of $\{{\bf 1}^T{\bf f}({\bf x}^k)\}$, $\{{\bf x}^k\}$ is bounded. Therefore, there exists a convergent subsequence of $\{{\bf x}^k\}$.
Let ${\bf x}^*$ be any limit point of $\{{\bf x}^k\}$. By \eqref{Eq:conv-sumofgrad} and the continuity of $\nabla {\bf f}$, it holds
\[
{\bf 1}^T\nabla {\bf f}({\bf x}^*)=0.
\]
Moreover, by Proposition \ref{Propos:asympconsensus}, ${\bf x}^*$ is consensual. As a consequence, ${\bf x}^*$ is a stationary point of problem \eqref{Eq:consensusProblem}.

In addition, if ${\bf x}^*$ is isolated, then by the asymptotic regularity of $\{{\bf x}^k\}$ (Lemma \ref{Lemm:Asympregular}), $\{{\bf x}^k\}$ converges to ${\bf x}^*$ \cite{Ostrowski1973}.

\end{proof}

\subsection{Proof for Proposition \ref{Propos:convergrate-dgd}}

To prove Proposition \ref{Propos:convergrate-dgd}, we still need the following lemmas.

\begin{Lemm}[Accumulated consensus of iterates]
\label{Lemm:accum-cons}
Under conditions of Proposition \ref{Propos:asympconsensus}, we have
\begin{align}
\label{Eq:accum-cons}
\sum_{k=0}^K \alpha_k \|{\bf x}^{k+1}-\bar{\bf x}^{k+1}\| \leq D_1 + D_2 \sum_{k=0}^K \alpha_k^2,
\end{align}
where $D_1 = \frac{C\|{\bf x}^0\|\zeta}{2(1-\zeta)}$, $D_2 = C\left(\frac{\|{\bf x}^0\|\zeta}{2}+\frac{B}{1-\zeta}\right)$, and $B$ is specified in Assumption \ref{Assump:boundedgradient}.
\end{Lemm}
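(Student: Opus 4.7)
The plan is to apply the non-asymptotic bound already established inside the proof of Proposition \ref{Propos:asympconsensus}, namely (shifting the index by one in \eqref{Eq:asympconsens2})
\[
\|{\bf x}^{k+1} - \bar{\bf x}^{k+1}\| \le C\Big(\|{\bf x}^0\|\zeta^{k+1} + B\sum_{j=0}^{k}\alpha_j \zeta^{k-j}\Big),
\]
then multiply by $\alpha_k$ and sum from $k=0$ to $K$. This splits the quantity to be bounded into
\[
T_1 := C\|{\bf x}^0\|\sum_{k=0}^K \alpha_k \zeta^{k+1}, \qquad T_2 := CB\sum_{k=0}^K \alpha_k \sum_{j=0}^{k}\alpha_j \zeta^{k-j},
\]
so it remains to show $T_1 \le D_1 + \tfrac{C\|{\bf x}^0\|\zeta}{2}\sum_k\alpha_k^2$ and $T_2 \le \tfrac{CB}{1-\zeta}\sum_k\alpha_k^2$.

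For $T_1$, I would use Young's inequality in the form $\alpha_k\,\zeta^{k+1} = \sqrt{\zeta^{k+1}}\cdot \alpha_k\sqrt{\zeta^{k+1}} \le \tfrac{1}{2}\zeta^{k+1} + \tfrac{1}{2}\alpha_k^2\,\zeta^{k+1}$, which is chosen precisely so that the $\zeta$ factor propagates into the $\alpha_k^2$ coefficient. Bounding $\zeta^{k+1}\le \zeta$ in the second piece and summing the geometric series $\sum_{k\ge 0}\zeta^{k+1}=\zeta/(1-\zeta)$ in the first piece yields exactly
\[
T_1 \le \frac{C\|{\bf x}^0\|\zeta}{2(1-\zeta)} + \frac{C\|{\bf x}^0\|\zeta}{2}\sum_{k=0}^K \alpha_k^2,
\]
which is $D_1$ together with the first contribution to $D_2$.

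For $T_2$, I would swap the order of summation to get $T_2 = CB\sum_{j=0}^K \alpha_j \sum_{k=j}^K \alpha_k \zeta^{k-j}$, then apply Young's inequality $\alpha_j\alpha_k \le \tfrac{1}{2}(\alpha_j^2+\alpha_k^2)$ inside. Each resulting double sum factors a geometric series in $\zeta$ whose total is at most $1/(1-\zeta)$; a symmetric re-indexing in the first piece yields $T_2 \le \tfrac{CB}{1-\zeta}\sum_{k=0}^K\alpha_k^2$, which contributes the remaining $\tfrac{CB}{1-\zeta}$ inside $D_2$. Adding the bounds on $T_1$ and $T_2$ gives the claim with the stated constants $D_1$ and $D_2$.

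The only subtle choice is the first step: the Young splitting used for $T_1$ must weight $\zeta^{k+1}$ symmetrically between the two terms rather than between $\alpha_k^2$ and a pure constant, or else the resulting coefficient of $\sum\alpha_k^2$ would lose the $\zeta$ factor and not match the stated $D_2$. Everything else reduces to two applications of $2ab\le a^2+b^2$ and summing a geometric series, so no genuine obstacle is expected.
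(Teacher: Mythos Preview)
Your proposal is correct and essentially identical to the paper's own proof: both start from the bound in \eqref{Eq:asympconsens2}, split into the same two terms $T_1$ and $T_2$, and handle $T_2$ via $\alpha_k\alpha_j\le\tfrac12(\alpha_k^2+\alpha_j^2)$ together with the geometric sum $\sum_{\ell\ge 0}\zeta^{\ell}\le 1/(1-\zeta)$. The only cosmetic difference is in $T_1$: the paper first factors out $\zeta$ and applies $\alpha_k\zeta^k\le\tfrac12(\alpha_k^2+\zeta^{2k})$, whereas you split $\alpha_k\zeta^{k+1}\le\tfrac12\zeta^{k+1}+\tfrac12\alpha_k^2\zeta^{k+1}$ and then bound $\zeta^{k+1}\le\zeta$ in the second piece; both routes yield the same constants $D_1,D_2$.
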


\begin{proof}
By \eqref{Eq:asympconsens2},
\begin{align}
\label{Eq:accum-cons1}
&\sum_{k=0}^K \alpha_k \|{\bf x}^{k+1}-\bar{\bf x}^{k+1}\| \leq C\|{\bf x}^0\|\zeta\sum_{k=0}^K \alpha_k \zeta^k \nonumber\\
& + CB\sum_{k=0}^K \sum_{j=0}^k \zeta^{k-j}\alpha_k \alpha_j.
\end{align}
In the following, we estimate these two terms in the right-hand side of \eqref{Eq:accum-cons1}, respectively.
Note that
\begin{align}
\sum_{k=0}^K \alpha_k \zeta^k
&\leq \frac{1}{2}\sum_{k=0}^K \alpha_k^2 + \frac{1}{2}\sum_{k=0}^K \zeta^{2k} \nonumber\\
&\leq \frac{1}{2(1-\zeta)} + \frac{1}{2}\sum_{k=0}^K \alpha_k^2, \label{Eq:accum-cons-term1}
\end{align}
and
\begin{align}
&\sum_{k=0}^K \sum_{j=0}^k \zeta^{k-j} \alpha_k \alpha_j
\leq \frac{1}{2}\sum_{k=0}^K \sum_{j=0}^k \zeta^{k-j} (\alpha_k^2 + \alpha_j^2) \nonumber\\
&=\frac{1}{2}\sum_{k=0}^K \alpha_k^2 \sum_{j=0}^k \zeta^{k-j} + \frac{1}{2}\sum_{j=0}^K \alpha_j^2 \sum_{k=j}^K \zeta^{k-j}\nonumber\\
&\leq \frac{1}{1-\zeta}\sum_{k=0}^K \alpha_k^2. \label{Eq:accum-cons-term2}
\end{align}
Plugging \eqref{Eq:accum-cons-term1} and \eqref{Eq:accum-cons-term2} into \eqref{Eq:accum-cons1} yields \eqref{Eq:accum-cons}.
\end{proof}

Besides Lemma \ref{Lemm:accum-cons}, we also need the following two lemmas, which have appeared in the literature (cf. \cite{Chen2012b}).

\begin{Lemm}[\cite{Chen2012b}]
\label{Lemm:rate-sum-gammak}
Let $\gamma_k = \frac{1}{k^{\epsilon}}$ for some $0<\epsilon\leq 1$.
Then the following hold
\begin{enumerate}
\item[(a)] if $0<\epsilon<1/2$,
$\frac{1}{\sum_{k=1}^K \gamma_k} \leq \frac{1-\epsilon}{K^{1-\epsilon}-1} = {\cal O}(\frac{1}{K^{1-\epsilon}}),$
\[\frac{\sum_{k=1}^K \gamma_k^2}{\sum_{k=1}^K \gamma_k} \leq \frac{1-\epsilon}{1-2\epsilon}\cdot \frac{K^{1-2\epsilon-2\epsilon}}{K^{1-\epsilon}-1} = {\cal O}(\frac{1}{K^{\epsilon}}).\]

\item[(b)] if $\epsilon=1/2,$
$\frac{1}{\sum_{k=1}^K \gamma_k} \leq \frac{1-\epsilon}{K^{1-\epsilon}-1} = {\cal O}(\frac{1}{\sqrt{K}}),$
\[\frac{\sum_{k=1}^K \gamma_k^2}{\sum_{k=1}^K \gamma_k} \leq \frac{1+\ln K}{2(K^{1/2}-1)}={\cal O}(\frac{\ln K}{\sqrt{K}}).\]

\item[(c)] if $1/2 < \epsilon<1$,
$\frac{1}{\sum_{k=1}^K \gamma_k} \leq \frac{1-\epsilon}{K^{1-\epsilon}-1} = {\cal O}(\frac{1}{K^{1-\epsilon}}),$
\[\frac{\sum_{k=1}^K \gamma_k^2}{\sum_{k=1}^K \gamma_k} \leq \frac{1-\epsilon}{2\epsilon-1}\cdot \frac{2\epsilon - 1/K^{2\epsilon-1}}{K^{1-\epsilon}-1} = {\cal O}(\frac{1}{K^{1-\epsilon}}).\]

\item[(d)] if $\epsilon=1$,
$\frac{1}{\sum_{k=1}^K \gamma_k} \leq \frac{1}{\ln K} = {\cal O}(\frac{1}{\ln K}),$
\[\frac{\sum_{k=1}^K \gamma_k^2}{\sum_{k=1}^K \gamma_k} \leq \frac{1-1/K}{\ln(K+1)-\ln 2} = {\cal O}(\frac{1}{\ln K}).\]
\end{enumerate}
\end{Lemm}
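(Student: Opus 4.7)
The plan is to establish the bounds via the standard integral test for monotone sequences, applied separately to $\sum_{k=1}^K \gamma_k = \sum_{k=1}^K k^{-\epsilon}$ (for a lower bound) and to $\sum_{k=1}^K \gamma_k^2 = \sum_{k=1}^K k^{-2\epsilon}$ (for an upper bound), and then to combine these bounds and simplify in each of the four regimes $\epsilon \in (0,1/2)$, $\epsilon = 1/2$, $\epsilon \in (1/2,1)$, $\epsilon = 1$.

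First, since $x \mapsto x^{-\epsilon}$ is positive and decreasing on $[1,\infty)$ for every $\epsilon \in (0,1]$, the standard comparison
\[
\sum_{k=1}^K k^{-\epsilon} \;\ge\; \int_1^{K+1} x^{-\epsilon}\,dx \;\ge\; \int_1^{K} x^{-\epsilon}\,dx
\]
yields $\sum_{k=1}^K \gamma_k \ge (K^{1-\epsilon}-1)/(1-\epsilon)$ when $\epsilon \in (0,1)$ and $\sum_{k=1}^K \gamma_k \ge \ln K$ when $\epsilon = 1$. Taking reciprocals gives precisely the bound on $1/\sum_{k=1}^K \gamma_k$ listed in each of the four cases. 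For the second quantity, I would apply the complementary inequality $\sum_{k=1}^K f(k) \le f(1) + \int_1^K f(x)\,dx$ to $f(x) = x^{-2\epsilon}$: this produces $\sum_{k=1}^K k^{-2\epsilon} \le 1 + (K^{1-2\epsilon}-1)/(1-2\epsilon)$ when $2\epsilon \ne 1$ and $\sum_{k=1}^K k^{-1} \le 1 + \ln K$ when $\epsilon = 1/2$.

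After that, the remaining work is algebraic: in each regime, divide the upper bound on $\sum_{k=1}^K \gamma_k^2$ by the lower bound on $\sum_{k=1}^K \gamma_k$, tidy up the resulting constants, and read off the asymptotic rate. For $0<\epsilon<1/2$ the numerator is of order $K^{1-2\epsilon}$ and the denominator is of order $K^{1-\epsilon}$, so the ratio is $\mathcal{O}(K^{-\epsilon})$; for $\epsilon=1/2$ the numerator is $\mathcal{O}(\ln K)$ and the denominator is $\mathcal{O}(\sqrt{K})$; for $1/2<\epsilon<1$ the series $\sum k^{-2\epsilon}$ is bounded, so the ratio is $\mathcal{O}(1/(K^{1-\epsilon}-1)) = \mathcal{O}(K^{\epsilon-1})$; for $\epsilon=1$ the numerator is bounded by $2$ and the denominator is $\Theta(\ln K)$. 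One minor bookkeeping point: for the $1/2<\epsilon<1$ case the sharp form written in the statement, namely $\frac{1-\epsilon}{2\epsilon-1}\cdot \frac{2\epsilon-K^{1-2\epsilon}}{K^{1-\epsilon}-1}$, comes from rewriting $1+(K^{1-2\epsilon}-1)/(1-2\epsilon) = (2\epsilon - K^{1-2\epsilon})/(2\epsilon-1)$, which should be done cleanly.

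There is no genuinely hard step here; the proof is essentially a calculus exercise. The only place where I would be careful is the $\epsilon = 1/2$ and $\epsilon = 1$ cases, because the formulas for $\epsilon \ne 1/2$ and $\epsilon \ne 1$ degenerate and must be replaced by their logarithmic counterparts via the identity $\int_1^K x^{-1}\,dx = \ln K$. Once these integral estimates are in hand, the four rates follow immediately, and the lemma is proved.
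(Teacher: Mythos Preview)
Your approach is correct. The paper does not supply its own proof of this lemma; it is quoted directly from \cite{Chen2012b}, so there is no in-paper argument to compare against. The integral-comparison method you outline---lower-bounding $\sum_{k=1}^K k^{-\epsilon}$ by $\int_1^K x^{-\epsilon}\,dx$ and upper-bounding $\sum_{k=1}^K k^{-2\epsilon}$ by $1+\int_1^K x^{-2\epsilon}\,dx$, then treating the four regimes separately---is exactly the standard route and yields the stated asymptotic rates. Your algebraic identification $1+(K^{1-2\epsilon}-1)/(1-2\epsilon)=(2\epsilon-K^{1-2\epsilon})/(2\epsilon-1)$ for the case $1/2<\epsilon<1$ is also correct and matches the displayed constant. (Note that the exponent ``$1-2\epsilon-2\epsilon$'' in case~(a) of the statement is evidently a typographical slip for ``$K^{1-2\epsilon}-2\epsilon$'', consistent with your computation.)
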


\begin{Lemm}(\cite[Proposition 3]{Chen2012b})
\label{Lemm:cvx-lip}
Let $h: \mathbb{R}^d \rightarrow \mathbb{R}$ be a continuously differentiable function whose gradient is Lipschitz continuous with  constant $L_h$. Then for any $x,y,u \in\mathbb{R}^p,$
\[ h(u)\geq h(x)+\langle \nabla h(y),u-x\rangle -\frac{L_h}{2}\|x-y\|^2.\]
\end{Lemm}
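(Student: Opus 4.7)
The plan is to prove Lemma \ref{Lemm:cvx-lip} by combining two standard facts: a first-order convexity inequality evaluated at $y$, and the descent lemma coming from Lipschitz continuity of $\nabla h$. Although the statement as displayed does not explicitly say ``convex,'' the label \texttt{cvx-lip}, the reference to \cite[Proposition 3]{Chen2012b}, and the role the lemma plays later (establishing the convex convergence rates of DGD in Proposition \ref{Propos:convergrate-dgd}) make clear that $h$ is assumed to be convex. I will proceed under that assumption.

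First, I would invoke convexity of $h$ at the point $y$ to obtain the lower linearization
\begin{equation*}
h(u)\ \geq\ h(y)+\langle \nabla h(y),\, u-y\rangle,\qquad \forall u.
\end{equation*}
Next, since $\nabla h$ is $L_h$-Lipschitz, the descent lemma yields the upper quadratic bound
\begin{equation*}
h(x)\ \leq\ h(y)+\langle \nabla h(y),\, x-y\rangle+\tfrac{L_h}{2}\|x-y\|^2,\qquad \forall x,y,
\end{equation*}
which itself follows from integrating $\nabla h$ along the segment from $y$ to $x$ and using Cauchy--Schwarz together with the Lipschitz estimate $\|\nabla h(y+t(x-y))-\nabla h(y)\|\le L_h t\|x-y\|$.

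Subtracting the second display from the first eliminates $h(y)$ and produces
\begin{equation*}
h(u)-h(x)\ \geq\ \langle \nabla h(y),\, u-y\rangle-\langle \nabla h(y),\, x-y\rangle-\tfrac{L_h}{2}\|x-y\|^2,
\end{equation*}
and the two inner products combine to $\langle \nabla h(y),u-x\rangle$, giving exactly the claimed inequality. The only conceptual subtlety is being careful that the gradient is evaluated at $y$ in both terms so that it cancels cleanly when the two bounds are combined; with that observation, the argument is essentially a two-line calculation and presents no real obstacle. The main ``care point'' is really just to flag the (missing) convexity hypothesis, since the inequality is false in general for nonconvex $h$ even when $\nabla h$ is Lipschitz.
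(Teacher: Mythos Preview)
Your argument is correct and is the standard two-line derivation: convexity at $y$ gives $h(u)\ge h(y)+\langle\nabla h(y),u-y\rangle$, the descent lemma gives $h(x)\le h(y)+\langle\nabla h(y),x-y\rangle+\tfrac{L_h}{2}\|x-y\|^2$, and subtracting yields the claim. You are also right to flag that convexity is a hidden hypothesis here; the inequality fails for nonconvex Lipschitz-smooth $h$, and the lemma is only invoked in the paper under the additional convexity assumption of Proposition~\ref{Propos:convergrate-dgd} and Lemma~\ref{Lemm:distant-hatLalpha}.

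As for comparison with the paper's proof: there is none to compare to. The paper simply imports the result from \cite[Proposition~3]{Chen2012b} without reproducing the argument, so your proposal supplies what the paper omits.
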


\begin{proof}[\bf Proof of Proposition \ref{Propos:convergrate-dgd}]
To prove this proposition, we first develop the following inequality,
\begin{align}
\label{Eq:rate-dgd1}
{\cal L}_{\alpha_k}({\bf x}^{k+1}) - {\cal L}_{\alpha_k}({\bf u}) \leq \frac{1}{2\alpha_k}(\|{\bf x}^k - {\bf u}\|^2 - \|{\bf x}^{k+1} - {\bf u}\|^2)
\end{align}
for any ${\bf u}\in \mathbb{R}^{n\times p}.$
By Lemma \ref{Lemm:cvx-lip}, we have
\begin{align}
&{\cal L}_{\alpha_k}({\bf u}) \geq {\cal L}_{\alpha_k}({\bf x}^{k+1}) \label{Eq:rate-dgd2}\\
&+ \langle \nabla {\cal L}_{\alpha_k}({\bf x}^k), {\bf u}-{\bf x}^{k+1}\rangle -\frac{L^*}{2} \|{\bf x}^{k+1}-{\bf x}^k\|^2, \nonumber
\end{align}
where $L^* = L_f + \alpha_k^{-1}(1-\lambda_n(W))$, and by \eqref{Eq:DGDequiv1}, we have $\nabla {\cal L}_{\alpha_k}({\bf x}^k) = \alpha_k^{-1}({\bf x}^k - {\bf x}^{k+1})$.
Then \eqref{Eq:rate-dgd2} implies
\begin{align}
&{\cal L}_{\alpha_k}({\bf u}) \geq {\cal L}_{\alpha_k}({\bf x}^{k+1}) \label{Eq:rate-dgd3}\\
&+ \alpha_k^{-1} \langle {\bf x}^k - {\bf x}^{k+1}, {\bf u}-{\bf x}^{k+1}\rangle -\frac{L^*}{2} \|{\bf x}^{k+1}-{\bf x}^k\|^2.\nonumber
\end{align}
Note that the specific form of $\alpha_k (=\frac{1}{L_f(k+1)^\epsilon})$, there exists an integer $k_0>0$ such that $L^* \leq \alpha_k^{-1}$ for all $k>k_0$. Actually, for the simplicity of the proof, we can take $\alpha_k < \frac{\lambda_n(W)}{L_f}$ starting from the first step so that $L^* \leq \alpha_k^{-1}$ holds from the initial step. Thus, \eqref{Eq:rate-dgd3} implies
\begin{align}
&{\cal L}_{\alpha_k}({\bf u}) \geq {\cal L}_{\alpha_k}({\bf x}^{k+1}) \label{Eq:rate-dgd4}\\
&+ \alpha_k^{-1} \langle {\bf x}^k - {\bf x}^{k+1}, {\bf u}-{\bf x}^{k+1}\rangle -\frac{1}{2\alpha_k} \|{\bf x}^{k+1}-{\bf x}^k\|^2.\nonumber
\end{align}
Recall that for any two vectors $a$ and $b$, it holds $2\langle a, b\rangle -\|a\|^2 = \|b\|^2-\|a-b\|^2$. Therefore,
\[{\cal L}_{\alpha_k}({\bf u}) \geq {\cal L}_{\alpha_k}({\bf x}^{k+1})+\frac{1}{2\alpha_k} (\|{\bf u}-{\bf x}^{k+1}\|^2-\|{\bf u}-{\bf x}^{k}\|^2).\]
As a consequence, we get the basic inequality \eqref{Eq:rate-dgd1}.

Note that the optimal solution ${\bf x}_{\mathrm{opt}}$ is consensual and thus, $\|{\bf x}_{\mathrm{opt}}\|^2_{I-W}=0$. Therefore, ${\cal L}_{\alpha_k}({\bf x}_{\mathrm{opt}}) = \bar{f}({\bf x}_{\mathrm{opt}}) = f_{\mathrm{opt}}$. By \eqref{Eq:rate-dgd1}, we have
\begin{align*}
&\alpha_k\left({\cal L}_{\alpha_k}({\bf x}^{k+1})-f_{\mathrm{opt}}\right) \\
&\leq (\|{\bf x}^k - {\bf x}_{\mathrm{opt}}\|^2 - \|{\bf x}^{k+1}-{\bf x}_{\mathrm{opt}}\|^2)/2.
\end{align*}
Summing the above inequality over $k=0,1,\ldots,K$ yields
\begin{align}
\label{Eq:rate-dgd4}
\sum_{k=0}^K \alpha_k({\cal L}_{\alpha_k}({\bf x}^{k+1})-f_{\mathrm{opt}}) \leq \|{\bf x}^0 - {\bf x}_{\mathrm{opt}}\|^2/2.
\end{align}
Moreover, noting that ${\cal L}_{\alpha_k}(\bar{\bf x}^{k+1}) = {\bar f}(\bar{\bf x}^{k+1})$ and by the convexity of ${\cal L}_{\alpha_k}$,
\begin{align}
{\cal L}_{\alpha_k}({\bf x}^{k+1})
&\geq {\bar f}(\bar{\bf x}^{k+1}) + \langle \nabla {\cal L}_{\alpha_k}({\bf x}^{k+1}), {\bf x}^{k+1}-\bar{\bf x}^{k+1}\rangle \nonumber\\
&\geq {\bar f}(\bar{\bf x}^{k+1})- B\|{\bf x}^{k+1}-\bar{\bf x}^{k+1}\|, \label{Eq:rate-dgd5}
\end{align}
where the second inequality holds by the bounded assumption of gradient (cf. Assumption \ref{Assump:boundedgradient}).
Plugging \eqref{Eq:rate-dgd5} into \eqref{Eq:rate-dgd4} yields
\begin{align}
&\sum_{k=0}^K \alpha_k ({\bar f}(\bar{\bf x}^{k+1})-f_{\mathrm{opt}}) \label{Eq:rate-dgd6}\\
&\leq \frac{1}{2}\|{\bf x}^0 - {\bf x}_{\mathrm{opt}}\|^2 + B\sum_{k=0}^K \alpha_k \|{\bf x}^{k+1} - \bar{\bf x}^{k+1}\|. \nonumber
\end{align}
By the definition of $\bar{f}^K$ \eqref{Eq:def-bar-fk}, then \eqref{Eq:rate-dgd6} implies
\begin{align}
&(\bar{f}^K -f_{\mathrm{opt}})\sum_{k=0}^K \alpha_k \label{Eq:rate-dgd7}\\
&\leq \frac{1}{2}\|{\bf x}^0 - {\bf x}_{\mathrm{opt}}\|^2 + B\sum_{k=0}^K \alpha_k \|{\bf x}^{k+1} - \bar{\bf x}^{k+1}\| \nonumber\\
&\leq D_3 + D_4 \sum_{k=0}^K \alpha_k^2,
\end{align}
where $D_3 = \frac{1}{2}\|{\bf x}^0 - {\bf x}_{\mathrm{opt}}\|^2 + BD_1$, $D_4 = B D_2$, $D_1$ and $D_2$ are specified in Lemma \ref{Lemm:accum-cons}, and the second inequality holds for Lemma \ref{Lemm:accum-cons}.
As a consequence,
\begin{align}
\label{Eq:rate-dgd8}
\bar{f}^K -f_{\mathrm{opt}} \leq \frac{D_3 + D_4 \sum_{k=0}^K \alpha_k^2}{\sum_{k=0}^K \alpha_k}.
\end{align}
Furthermore, by Lemma \ref{Lemm:rate-sum-gammak}, we get the claims of this proposition.
\end{proof}

\subsection{Proofs for Theorem \ref{Thm:Globalconverg-PGDGD} and Proposition \ref{Propos:conv-rate-prox-DGD}}
In order to prove Theorem \ref{Thm:Globalconverg-PGDGD}, we need the following lemmas.

\begin{Lemm}[Sufficient descent of $\{\hat{\cal L}_{\alpha}({\bf x}^k)\}$]
\label{Lemm:Suffdescent-PGDGD}
Let Assumptions \ref{Assump:MixMat} and \ref{Assump:objective-composite} hold. Results are given in two cases below:
\begin{enumerate}
\item[C1:] $r_i$'s are convex. Set $0<\alpha<\frac{1+\lambda_n(W)}{L_f}$.
\begin{align}
&\hat{\cal L}_{\alpha}({\bf x}^{k+1}) \leq \hat{\cal L}_{\alpha}({\bf x}^k) \label{Eq:Suffdescent-PGDGD0}\\
&- \frac{1}{2}\big(\alpha^{-1}(1+\lambda_n(W))-L_f\big) \|{\bf x}^{k+1} - {\bf x}^k\|^2, \forall k \in \mathbb{N}.\nonumber
\end{align}
\item[C2:] $r_i$'s are not necessarily convex (in this case, we assume $\lambda_n(W)>0$). Set $0<\alpha<\frac{\lambda_n(W)}{L_f}$.
\begin{align}
&\hat{\cal L}_{\alpha}({\bf x}^{k+1}) \leq \hat{\cal L}_{\alpha}({\bf x}^k) \label{Eq:Suffdescent-PGDGD1}\\
&- \frac{1}{2}\big(\alpha^{-1}\lambda_n(W)-L_f\big) \|{\bf x}^{k+1} - {\bf x}^k\|^2, \forall k \in \mathbb{N}.\nonumber
\end{align}
\end{enumerate}
\end{Lemm}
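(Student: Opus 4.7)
The plan is to treat Prox-DGD as forward-backward splitting applied to $\hat{\mathcal{L}}_{\alpha}$, as already noted in the paper via \eqref{Eq:proximalDGDequiv1}, and to derive descent of $\hat{\mathcal{L}}_{\alpha}$ by combining a prox-step inequality with the Lipschitz-gradient descent bound for $\mathbf{1}^T\mathbf{f}$. The two cases differ only in which consequence of the prox step is used.

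Starting from the defining minimization
\[
\mathbf{x}^{k+1}\in\arg\min_{\mathbf{u}}\Bigl\{\alpha r(\mathbf{u})+\tfrac12\|\mathbf{u}-(W\mathbf{x}^k-\alpha\nabla\mathbf{f}(\mathbf{x}^k))\|^2\Bigr\},
\]
in the nonconvex case C2 I will simply compare the objective value at $\mathbf{u}=\mathbf{x}^{k+1}$ with that at $\mathbf{u}=\mathbf{x}^k$, obtaining
\[
\alpha(r(\mathbf{x}^{k+1})-r(\mathbf{x}^k))+\tfrac12\|\mathbf{x}^{k+1}-\mathbf{x}^k\|^2+\langle(I-W)\mathbf{x}^k+\alpha\nabla\mathbf{f}(\mathbf{x}^k),\mathbf{x}^{k+1}-\mathbf{x}^k\rangle\le 0.
\]
In the convex case C1, I will instead use the first-order condition $-\alpha^{-1}(\mathbf{x}^{k+1}-W\mathbf{x}^k)-\nabla\mathbf{f}(\mathbf{x}^k)\in\partial r(\mathbf{x}^{k+1})$ together with the convex subgradient inequality $r(\mathbf{x}^k)\ge r(\mathbf{x}^{k+1})+\langle\xi^{k+1},\mathbf{x}^k-\mathbf{x}^{k+1}\rangle$; this yields an analogous inequality with $\tfrac12\|\mathbf{x}^{k+1}-\mathbf{x}^k\|^2$ replaced by the full $\|\mathbf{x}^{k+1}-\mathbf{x}^k\|^2$. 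The extra $\tfrac{1}{2\alpha}\|\mathbf{x}^{k+1}-\mathbf{x}^k\|^2$ gained from convexity is precisely what loosens the step-size restriction by an additive $\lambda_n(W)/L_f$ in C1 compared to C2.

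I then add the standard descent lemma for the $L_f$-Lipschitz-differentiable function $\mathbf{1}^T\mathbf{f}$ to cancel the $\langle\nabla\mathbf{f}(\mathbf{x}^k),\mathbf{x}^{k+1}-\mathbf{x}^k\rangle$ inner product, and convert the remaining $(I-W)$ cross-term via the polarization identity
\[
2\langle(I-W)\mathbf{x}^k,\mathbf{x}^{k+1}-\mathbf{x}^k\rangle=\|\mathbf{x}^{k+1}\|_{I-W}^2-\|\mathbf{x}^k\|_{I-W}^2-\|\mathbf{x}^{k+1}-\mathbf{x}^k\|_{I-W}^2,
\]
which is valid because $I-W\succeq 0$ by Assumption \ref{Assump:MixMat}. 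Moving $\tfrac{1}{2\alpha}(\|\mathbf{x}^{k+1}\|_{I-W}^2-\|\mathbf{x}^k\|_{I-W}^2)$ to the left completes the Lyapunov difference $\hat{\mathcal{L}}_{\alpha}(\mathbf{x}^{k+1})-\hat{\mathcal{L}}_{\alpha}(\mathbf{x}^k)$, while the leftover self-distance $\tfrac{1}{2\alpha}\|\mathbf{x}^{k+1}-\mathbf{x}^k\|_{I-W}^2$ is bounded by $\tfrac{1-\lambda_n(W)}{2\alpha}\|\mathbf{x}^{k+1}-\mathbf{x}^k\|^2$ using $\lambda_{\max}(I-W)=1-\lambda_n(W)$. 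Collecting the coefficients of $\|\mathbf{x}^{k+1}-\mathbf{x}^k\|^2$ then produces exactly $-\tfrac12(\alpha^{-1}(1+\lambda_n(W))-L_f)$ in C1 and $-\tfrac12(\alpha^{-1}\lambda_n(W)-L_f)$ in C2, which are negative precisely under the stated step-size bounds.

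The main obstacle is the algebraic bookkeeping: tracking the additional $\tfrac{1}{2\alpha}\|\mathbf{x}^{k+1}-\mathbf{x}^k\|^2$ contributed by the subgradient inequality in C1 and its interaction with the self-distance $\tfrac{1}{2\alpha}\|\mathbf{x}^{k+1}-\mathbf{x}^k\|_{I-W}^2$ produced by polarization. The requirement $\lambda_n(W)>0$ in C2 is not a proof artifact but a genuine consequence of losing one full unit in the descent coefficient relative to C1: without the convex subgradient slack, the range $\alpha^{-1}\lambda_n(W)>L_f$ can be nonempty only when $W$ is positive definite.
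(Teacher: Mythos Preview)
Your proposal is correct and follows essentially the same approach as the paper's proof: in both cases the key distinction is exactly the one you identify, namely using the convex subgradient inequality in C1 versus the bare minimizer comparison in C2 to control $r(\mathbf{x}^{k+1})-r(\mathbf{x}^k)$. The only cosmetic difference is that the paper applies the descent lemma directly to $\mathcal{L}_\alpha$ with its Lipschitz constant $L^*=L_f+\alpha^{-1}(1-\lambda_n(W))$, thereby absorbing the $(I-W)$ contribution into $L^*$, whereas you split off $\mathbf{1}^T\mathbf{f}$ and handle the $(I-W)$ cross-term via polarization plus the eigenvalue bound $\|\cdot\|_{I-W}^2\le (1-\lambda_n(W))\|\cdot\|^2$; the two organizations are algebraically identical and yield the same coefficients.
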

\begin{proof}
Recall from Lemma \ref{Lemm:Suffdescent} that $\nabla{\cal L}_{\alpha}({\bf x})$ is $L^* $-Lipschitz continuous for $L^* = L_f + \alpha^{-1}(1-\lambda_n(W))$, and thus
\begin{align}
&\hat{\cal L}_{\alpha}({\bf x}^{k+1})-\hat{\cal L}_{\alpha}({\bf x}^k)\nonumber\\
&= {\cal L}_{\alpha}({\bf x}^{k+1}) - {\cal L}_{\alpha}({\bf x}^k) + r({\bf x}^{k+1})- r({\bf x}^k)\nonumber\\
&\le \langle\nabla{\cal L}_{\alpha}({\bf x^k}), {\bf x}^{k+1}-{\bf x}^{k}\rangle+\frac{L^*}{2}\|{\bf x}^{k+1}-{\bf x}^{k}\|^2 \nonumber\\
&+ r({\bf x}^{k+1})- r({\bf x}^k).  \label{eq:proxdesc}
\end{align}

{\bf C1}: From the convexity of $r$, \eqref{eq:prox_def}, and \eqref{Eq:proximalDGDequiv1}, it follows that
$$ 0=  \xi^{k+1} +\frac{1}{\alpha}\big({\bf x}^{k+1}-{\bf x}^{k} + \alpha\nabla{\cal L}_{\alpha}({\bf x^k})\big),\  \xi^{k+1}\in\partial r({\bf x}^{k+1}).$$
This and the convexity of $r$ further give us
\begin{align*}
&r({\bf x}^{k+1})- r({\bf x}^k)\le \langle \xi^{k+1}, {\bf x}^{k+1}-{\bf x}^{k}\rangle \nonumber\\
&=-\frac{1}{\alpha}\|{\bf x}^{k+1}-{\bf x}^{k}\|^2-\langle\nabla{\cal L}_{\alpha}({\bf x^k}), {\bf x}^{k+1}-{\bf x}^{k}\rangle.
\end{align*}
Substituting this inequality into the inequality \eqref{eq:proxdesc} and then expanding $L^*= L_f + \alpha^{-1}(1-\lambda_n(W))$ yield
\begin{align*}
&\hat{\cal L}_{\alpha}({\bf x}^{k+1})-\hat{\cal L}_{\alpha}({\bf x}^k) \le -\big(\frac{1}{\alpha}-\frac{L^*}{2}\big)\|{\bf x}^{k+1}-{\bf x}^{k}\|^2\\
&=- \frac{1}{2}\big(\alpha^{-1}(1+\lambda_n(W))-L_f\big) \|{\bf x}^{k+1}-{\bf x}^{k}\|^2.
\end{align*}
Sufficient descent requires the last term to be negative, thus $0<\alpha< \frac{1+\lambda_n(W)}{L_f}$.

{\bf C2}: From \eqref{eq:prox_def} and \eqref{Eq:proximalDGDequiv1}, it follows that the function
$r({\bf u})+\frac{\|{\bf u}-({\bf x}^k - \alpha\nabla{\cal L}_{\alpha}({\bf x^k}))\|^2}{2\alpha}$ reaches its minimum at ${\bf u}={\bf x}^{k+1}$. Comparing the values of this function at ${\bf x}^{k+1}$ and ${\bf x}^{k}$ yields
\begin{align*}
 &r({\bf x}^{k+1})- r({\bf x}^k)\le \frac{1}{2\alpha}\|{\bf x}^k -({\bf x}^k - \alpha\nabla{\cal L}_{\alpha}({\bf x^k}))\|^2\\
 &-\frac{1}{2\alpha}\|{\bf x}^{k+1}-({\bf x}^k - \alpha\nabla{\cal L}_{\alpha}({\bf x^k}))\|^2\\
&=-\frac{1}{2\alpha}\|{\bf x}^{k+1}-{\bf x}^k\|^2 -\langle \nabla{\cal L}_{\alpha}({\bf x^k}), {\bf x}^{k+1}-{\bf x}^k\rangle.
\end{align*}
Substituting this inequality into \eqref{eq:proxdesc} and expanding $L^*$ yield
\begin{align*}
\hat{\cal L}_{\alpha}({\bf x}^{k+1})-\hat{\cal L}_{\alpha}({\bf x}^k)&\le -\big(\frac{1}{2\alpha}-\frac{L^*}{2}\big)\|{\bf x}^{k+1}-{\bf x}^{k}\|^2\\
&=- \frac{1}{2}\big(\alpha^{-1}\lambda_n(W)-L_f\big) \|{\bf x}^{k+1}-{\bf x}^{k}\|^2.
\end{align*}
Hence, sufficient descent requires $0<\alpha< \frac{\lambda_n(W)}{L_f}$.
\end{proof}

\begin{Lemm}[Boundedness]
\label{Lemm:Lowerbouded-composite}
Under the conditions of Lemma \ref{Lemm:Suffdescent-PGDGD}, the sequence $\{\hat{\cal L}_{\alpha}({\bf x}^k)\}$ is lower bounded, and the sequence $\{{\bf x}^k\}$ is bounded.
\end{Lemm}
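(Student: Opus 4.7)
My plan is to mirror the structure of the proof of Lemma \ref{Lemm:Lowerbouded} (its smooth-case analogue), adapted to the composite Lyapunov function $\hat{\cal L}_{\alpha}({\bf x}) = \sum_{i=1}^n \big(f_i({\bf x}_{(i)}) + r_i({\bf x}_{(i)})\big) + \frac{1}{2\alpha}\|{\bf x}\|_{I-W}^2$. The argument has two parts which I address in sequence.

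For the first part, lower boundedness of $\{\hat{\cal L}_{\alpha}({\bf x}^k)\}$, I would observe that Assumption \ref{Assump:MixMat}(4) gives $I-W \succeq 0$, so the quadratic term $\frac{1}{2\alpha}\|{\bf x}\|_{I-W}^2$ is nonnegative for any step size $\alpha>0$. By Assumption \ref{Assump:objective-composite}(2), each $(f_i+r_i)$ is proper, lower semi-continuous, and coercive, and therefore attains a finite minimum $m_i \triangleq \min_{x}(f_i+r_i)(x) > -\infty$. Summing gives $\hat{\cal L}_{\alpha}({\bf x}) \geq \sum_{i=1}^n m_i > -\infty$ uniformly in ${\bf x}$, which is the required lower bound.

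For the second part, boundedness of $\{{\bf x}^k\}$, I would invoke Lemma \ref{Lemm:Suffdescent-PGDGD}: in either case (convex or nonconvex $r_i$) the choice of $\alpha$ forces the coefficient of $\|{\bf x}^{k+1}-{\bf x}^k\|^2$ in \eqref{Eq:Suffdescent-PGDGD0}/\eqref{Eq:Suffdescent-PGDGD1} to be nonpositive, so $\{\hat{\cal L}_{\alpha}({\bf x}^k)\}$ is monotonically nonincreasing. Hence $\hat{\cal L}_{\alpha}({\bf x}^k) \le \hat{\cal L}_{\alpha}({\bf x}^0) < +\infty$ for all $k$. Dropping the nonnegative $\frac{1}{2\alpha}\|{\bf x}^k\|_{I-W}^2$ from the left-hand side then yields $\sum_{i=1}^n (f_i+r_i)({\bf x}_{(i)}^k) \leq \hat{\cal L}_{\alpha}({\bf x}^0)$. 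Combining this with the per-agent lower bounds $m_i$ established above, I obtain for each fixed $j$
\begin{equation*}
(f_j+r_j)({\bf x}_{(j)}^k) \le \hat{\cal L}_{\alpha}({\bf x}^0) - \sum_{i\ne j} m_i,
\end{equation*}
i.e., $(f_j+r_j)({\bf x}_{(j)}^k)$ is uniformly bounded above in $k$. Coercivity of $(f_j+r_j)$ then rules out $\|{\bf x}_{(j)}^k\|\to\infty$, so each row is bounded; consequently there exists $\hat{\cal B}>0$ with $\|{\bf x}^k\|<\hat{\cal B}$ for every $k$.

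There is no real obstacle here beyond bookkeeping. The one place where care is needed is that coercivity of the \emph{sum} $\sum_i(f_i+r_i)$ over ${\bf x}\in\mathbb{R}^{n\times p}$ is not automatic from coercivity of the individual summands (each summand depends only on its own block ${\bf x}_{(i)}$); that is precisely why I have to invoke the per-agent lower bounds $m_i$ to decouple the agents and then apply coercivity separately to each block. Apart from this routine decoupling step, the argument is a direct composite-setting analogue of Lemma \ref{Lemm:Lowerbouded}.
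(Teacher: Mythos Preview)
Your proof is correct and follows essentially the same approach as the paper's: lower boundedness from Assumption \ref{Assump:objective-composite}(2), then monotonicity from Lemma \ref{Lemm:Suffdescent-PGDGD} to cap $\sum_i (f_i+r_i)({\bf x}_{(i)}^k)$ by $\hat{\cal L}_{\alpha}({\bf x}^0)$, then coercivity of each $f_i+r_i$ to bound the iterates. Your explicit per-agent decoupling via the lower bounds $m_i$ fills in a step the paper only gestures at (``due to the coercivity of each $f_i+r_i$''), but the route is the same.
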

\begin{proof}
The lower boundedness of $\{\hat{\cal L}_{\alpha}({\bf x}^k)\}$ is due to Assumption \ref{Assump:objective-composite} Part (2).

By Lemma \ref{Lemm:Suffdescent-PGDGD} and under a proper step size, $\hat{\cal L}_{\alpha}({\bf x}^k)$ is nonincreasing and upper bounded by $\hat{\cal L}_{\alpha}({\bf x}^0)$.
Hence, $\sum_{i=1}^n (f_i({\bf x}_{(i)}^k) + r_i({\bf x}_{(i)}^k))$ is upper bounded by $\hat{\cal L}_{\alpha}({\bf x}^0)$. Consequently, $\{{\bf x}^k\}$ is bounded due to the coercivity of each $f_i +r_i$ (see Assumption \ref{Assump:objective-composite} Part (2)).
\end{proof}

\begin{Lemm}[Bounded subgradient]
\label{Lemm:boundedsubgradient-PGDGD}
 Let $\partial \hat{\cal L}_{\alpha}({\bf x}^{k+1})$ denote the (limiting) subdifferential of $\hat{\cal L}_{\alpha}$, which is assumed to exist for all $k\in\mathbb{N}$. Then, there exists ${\bf g}^{k+1} \in \partial \hat{\cal L}_{\alpha}({\bf x}^{k+1})$ such that
\[\|{\bf g}^{k+1}\| \leq (\alpha^{-1}(2-\lambda_n(W))+L_f)\|{\bf x}^{k+1} - {\bf x}^k\|.\]
\end{Lemm}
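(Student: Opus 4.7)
The plan is to exhibit an explicit element of $\partial \hat{\cal L}_{\alpha}({\bf x}^{k+1})$ that is already a telescoping combination of $\nabla{\cal L}_{\alpha}({\bf x}^{k+1})-\nabla{\cal L}_{\alpha}({\bf x}^{k})$ and $\tfrac{1}{\alpha}({\bf x}^{k}-{\bf x}^{k+1})$, then bound each piece using the Lipschitz constant of $\nabla{\cal L}_{\alpha}$ derived already in the proof of Lemma~\ref{Lemm:Suffdescent}.

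First, since ${\cal L}_{\alpha}$ is continuously differentiable, the standard sum rule for the limiting subdifferential gives $\partial \hat{\cal L}_{\alpha}({\bf x})=\nabla{\cal L}_{\alpha}({\bf x})+\partial r({\bf x})$, so I only need to produce some $\xi^{k+1}\in\partial r({\bf x}^{k+1})$ and then set ${\bf g}^{k+1}\triangleq\nabla{\cal L}_{\alpha}({\bf x}^{k+1})+\xi^{k+1}$. From the Prox-DGD update written in the form \eqref{Eq:proximalDGDequiv1}, namely ${\bf x}^{k+1}=\mathrm{prox}_{\alpha r}({\bf x}^k-\alpha\nabla{\cal L}_{\alpha}({\bf x}^k))$, the first-order optimality condition of the proximal minimization yields
\begin{equation*}
0\in \alpha\,\partial r({\bf x}^{k+1}) + {\bf x}^{k+1}-{\bf x}^k+\alpha\nabla{\cal L}_{\alpha}({\bf x}^k),
\end{equation*}
so the element $\xi^{k+1}\triangleq\tfrac{1}{\alpha}({\bf x}^k-{\bf x}^{k+1})-\nabla{\cal L}_{\alpha}({\bf x}^k)$ lies in $\partial r({\bf x}^{k+1})$.

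Substituting this $\xi^{k+1}$ into ${\bf g}^{k+1}$ produces the identity
\begin{equation*}
{\bf g}^{k+1}=\bigl(\nabla{\cal L}_{\alpha}({\bf x}^{k+1})-\nabla{\cal L}_{\alpha}({\bf x}^{k})\bigr)+\tfrac{1}{\alpha}\bigl({\bf x}^k-{\bf x}^{k+1}\bigr).
\end{equation*}
Applying the triangle inequality, then the Lipschitz continuity of $\nabla{\cal L}_{\alpha}$ with constant $L^\ast=L_f+\alpha^{-1}(1-\lambda_n(W))$ recorded in \eqref{Eq:Suffdescent2}, gives
\begin{equation*}
\|{\bf g}^{k+1}\|\le L^\ast\|{\bf x}^{k+1}-{\bf x}^k\|+\tfrac{1}{\alpha}\|{\bf x}^{k+1}-{\bf x}^k\|=\bigl(\alpha^{-1}(2-\lambda_n(W))+L_f\bigr)\|{\bf x}^{k+1}-{\bf x}^k\|,
\end{equation*}
which is exactly the advertised bound.

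There is no real obstacle here; the only delicate point is that, because $r$ may be nonconvex, $\partial r$ must be interpreted as the limiting (Mordukhovich) subdifferential rather than the convex one. The step that needs justification under that interpretation is the optimality condition of $\mathrm{prox}_{\alpha r}$, but this is a standard consequence of Fermat's rule applied to the proximal subproblem \eqref{eq:prox_def} together with the $C^1$ sum rule; everything else is algebraic rearrangement and a single application of Lipschitz continuity already established in the text.
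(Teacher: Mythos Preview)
Your proof is correct and follows essentially the same approach as the paper's: both extract $\xi^{k+1}\in\partial r({\bf x}^{k+1})$ from the optimality condition of the proximal step, set ${\bf g}^{k+1}=\nabla{\cal L}_{\alpha}({\bf x}^{k+1})+\xi^{k+1}$, rewrite it as $(\nabla{\cal L}_{\alpha}({\bf x}^{k+1})-\nabla{\cal L}_{\alpha}({\bf x}^{k}))+\alpha^{-1}({\bf x}^k-{\bf x}^{k+1})$, and then apply the triangle inequality together with the Lipschitz constant $L^\ast=L_f+\alpha^{-1}(1-\lambda_n(W))$. Your added remark on the limiting subdifferential and Fermat's rule in the nonconvex case is a welcome clarification the paper leaves implicit.
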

\begin{proof}
By the iterate \eqref{Eq:proximalDGDequiv1}, the following optimality condition holds
\begin{align}
\label{Eq:optcond-PGDGD}
0\in \alpha^{-1}({\bf x}^{k+1} - {\bf x}^k + \alpha \nabla {\cal L}_{\alpha}({\bf x}^k)) + \partial r({\bf x}^{k+1}),
\end{align}
where $\partial r({\bf x}^{k+1})$ denotes the (limiting) subdifferential of $r$ at ${\bf x}^{k+1}$. For any $\xi^{k+1} \in \partial r({\bf x}^{k+1})$,
it follows from \eqref{Eq:optcond-PGDGD} that
\begin{align*}
&\nabla {\cal L}_{\alpha}({\bf x}^{k+1}) + \xi^{k+1} \\
&= \alpha^{-1}({\bf x}^k-{\bf x}^{k+1}) + (\nabla {\cal L}_{\alpha}({\bf x}^{k+1}) - \nabla {\cal L}_{\alpha}({\bf x}^{k})),
\end{align*}
which immediate yields
\begin{align*}
&\|\nabla {\cal L}_{\alpha}({\bf x}^{k+1}) + \xi^{k+1}\|\\
&\leq \alpha^{-1} \|{\bf x}^{k+1}-{\bf x}^k\| + \|\nabla {\cal L}_{\alpha}({\bf x}^{k+1}) - \nabla {\cal L}_{\alpha}({\bf x}^{k})\| \\
&\leq (\alpha^{-1}+L^*) \|{\bf x}^{k+1}-{\bf x}^k\|\\
\quad & \leq (\alpha^{-1}(2-\lambda_n(W))+L_f)\|{\bf x}^{k+1}-{\bf x}^k\|.
\end{align*}
Thus, then the claim of Lemma \ref{Lemm:boundedsubgradient-PGDGD} holds.
\end{proof}

Based on Lemmas \ref{Lemm:Suffdescent-PGDGD}--\ref{Lemm:boundedsubgradient-PGDGD}, we can easily prove Theorem \ref{Thm:Globalconverg-PGDGD} and Proposition \ref{Propos:conv-rate-prox-DGD}.

%
\begin{proof}[\bf Proof of Theorem \ref{Thm:Globalconverg-PGDGD}]
The proof of this theorem is similar to that of Theorem \ref{Thm:Globalconverg} and thus is omitted.
\end{proof}

%
%

\begin{proof}[\bf Proof of Proposition \ref{Propos:conv-rate-prox-DGD}]
The proof is similar to that of Proposition \ref{Propos:conv-rate}. We shall however note that in \eqref{Eq:keyineq1}, $a = \frac{1}{2}\big(\alpha^{-1}(1+\lambda_n(W))-L_f\big)$ if $r_i$'s are convex, while $a=\frac{1}{2}\big(\alpha^{-1}\lambda_n(W)-L_f\big)$ if $r_i$'s are not necessarily convex and $\lambda_n(W)>0$.
\end{proof}

\subsection{Proofs for Theorem \ref{Thm:hatLalphak} and Proposition \ref{Propos:asympconsensus-prox-dgd}}

Based on the iterate \eqref{Eq:proximalDGD} of Prox-DGD, we derive the following recursion of the iterates of Prox-DGD, which is similar to \eqref{Eq:recursion-xk}.

\begin{Lemm}[Recursion of $\{{\bf x}^k\}$]
\label{Lemm:recursion-xk-prox-dgd}
For any $k \in \mathbb{N}$,
\begin{align}
\label{Eq:recursion-xk-prox-dgd}
{\bf x}^k = W^k {\bf x}^0 - \sum_{j=0}^{k-1} \alpha_j W^{k-1-j}(\nabla {\bf f}({\bf x}^j)+{\xi}^{j+1}),
\end{align}
where
${\xi}^{j+1} \in \partial r({\bf x}^{j+1})$
is the one determined by the proximal operator \eqref{eq:prox_def}, for any $j=0,\ldots, k-1.$
\end{Lemm}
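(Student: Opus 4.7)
The plan is to first express one Prox-DGD step as an explicit affine identity by invoking the first-order optimality condition of the proximal map, then to iterate this one-step identity and obtain the closed-form recursion by induction on $k$.

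For the first step, I would use the definition in \eqref{eq:prox_def}: since ${\bf x}^{k+1}$ is a minimizer of ${\bf u}\mapsto \alpha_k r({\bf u})+\tfrac12\|{\bf u}-(W{\bf x}^k-\alpha_k\nabla{\bf f}({\bf x}^k))\|^2$, the (limiting) subdifferential at ${\bf x}^{k+1}$ contains zero, i.e., there exists $\xi^{k+1}\in \partial r({\bf x}^{k+1})$ such that
\[
\alpha_k\xi^{k+1}+{\bf x}^{k+1}-(W{\bf x}^k-\alpha_k\nabla{\bf f}({\bf x}^k))=0.
\]
Rearranging gives the one-step recursion
\[
{\bf x}^{k+1}=W{\bf x}^k-\alpha_k\bigl(\nabla{\bf f}({\bf x}^k)+\xi^{k+1}\bigr),
\]
which is the natural Prox-DGD analogue of the DGD identity used to derive \eqref{Eq:recursion-xk}. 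This is the only point where the proximal map actually enters; after this, the argument is purely algebraic.

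For the second step, I would induct on $k$. The base case $k=0$ is trivial since $W^0={\bf I}$ and the sum is empty. For the inductive step, assuming the formula holds at iterate $k$, I would substitute the induction hypothesis into the one-step recursion:
\begin{align*}
{\bf x}^{k+1} &= W\Bigl[W^k {\bf x}^0 - \sum_{j=0}^{k-1}\alpha_j W^{k-1-j}\bigl(\nabla{\bf f}({\bf x}^j)+\xi^{j+1}\bigr)\Bigr]\\
&\quad -\alpha_k\bigl(\nabla{\bf f}({\bf x}^k)+\xi^{k+1}\bigr)\\
&= W^{k+1}{\bf x}^0 - \sum_{j=0}^{k}\alpha_j W^{k-j}\bigl(\nabla{\bf f}({\bf x}^j)+\xi^{j+1}\bigr),
\end{align*}
where the last line absorbs the new term at index $j=k$ (noting $W^{k-k}={\bf I}$) into the sum. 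This is exactly \eqref{Eq:recursion-xk-prox-dgd} with $k$ replaced by $k+1$, completing the induction.

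No major obstacle is expected: the argument is essentially identical to the derivation of \eqref{Eq:recursion-xk} for DGD, with the only subtlety being the correct selection of $\xi^{j+1}$ from $\partial r({\bf x}^{j+1})$ so that the equality (rather than an inclusion) holds. Since each $\xi^{j+1}$ is fixed by the proximal operator itself, this selection is well-defined iterate-by-iterate, so the induction proceeds cleanly.
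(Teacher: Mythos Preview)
Your proposal is correct and matches the paper's own proof: the paper also first extracts the one-step identity ${\bf x}^{k+1}=W{\bf x}^k-\alpha_k(\nabla{\bf f}({\bf x}^k)+\xi^{k+1})$ from the optimality condition of the proximal map, and then simply states that the closed-form recursion follows. Your explicit induction fills in the step the paper leaves as ``we can easily derive,'' but the argument is otherwise identical.
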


\begin{proof}
By the definition of the proximal operator \eqref{eq:prox_def}, the iterate \eqref{Eq:proximalDGD} implies
\begin{align}
\label{Eq:prox-dgd-equiv1}
{\bf x}^{k+1} + \alpha_k{\xi}^{k+1} = W{\bf x}^k - \alpha_k \nabla {\bf f}({\bf x}^k),
\end{align}
where ${\xi}^{k+1}\in \partial r({\bf x}^{k+1})$, and thus
\begin{align}
\label{Eq:prox-dgd-equiv2}
{\bf x}^{k+1} = W{\bf x}^k - \alpha_k (\nabla {\bf f}({\bf x}^k)+{\xi}^{k+1}).
\end{align}
By \eqref{Eq:prox-dgd-equiv2}, we can easily derive the recursion \eqref{Eq:recursion-xk-prox-dgd}.
\end{proof}

\begin{proof}[\bf Proof of Proposition \ref{Propos:asympconsensus-prox-dgd}]
The proof of this proposition is similar to that of Proposition \ref{Propos:asympconsensus}. It only needs to note that the subgradient term $\nabla {\bf f}({\bf x}^j)+{\xi}^{j+1}$ is uniformly bounded by the constant $\bar{B}$ for any $j$. Thus, we omit it here.
\end{proof}

To prove Theorem \ref{Thm:hatLalphak}, we still need the following lemmas.
\begin{Lemm}
\label{Lemm:Suffdescent-PGDGD-decreasing}
Let Assumptions \ref{Assump:MixMat} and \ref{Assump:objective-composite} hold. In Prox-DGD, use the step sizes \eqref{eq:decralpha}. Results are given in two cases below:
\begin{enumerate}
\item[C1:] $r_i$'s are convex. For any $k \in \mathbb{N}$,
\begin{align}
&\hat{\cal L}_{\alpha_{k+1}}({\bf x}^{k+1}) \leq \hat{\cal L}_{\alpha_k}({\bf x}^k) + \frac{1}{2}(\alpha_{k+1}^{-1}-\alpha_k^{-1})\|{\bf x}^{k+1}\|_{I-W}^2 \nonumber\\
&- \frac{1}{2}\big(\alpha_k^{-1}(1+\lambda_n(W))-L_f\big) \|{\bf x}^{k+1} - {\bf x}^k\|^2.\label{Eq:Suffdescent-PGDGD-decreasing0}
\end{align}
\item[C2:] $r_i$'s are not necessarily convex. For any $k \in \mathbb{N}$,
\begin{align}
&\hat{\cal L}_{\alpha_{k+1}}({\bf x}^{k+1}) \leq \hat{\cal L}_{\alpha_k}({\bf x}^k)  + \frac{1}{2}(\alpha_{k+1}^{-1}-\alpha_k^{-1})\|{\bf x}^{k+1}\|_{I-W}^2 \nonumber\\
&- \frac{1}{2}\big(\alpha_k^{-1}\lambda_n(W)-L_f\big) \|{\bf x}^{k+1} - {\bf x}^k\|^2.\label{Eq:Suffdescent-PGDGD-decreasing1}
\end{align}
\end{enumerate}
\end{Lemm}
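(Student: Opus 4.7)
The plan is to reuse the fixed-step-size descent argument of Lemma \ref{Lemm:Suffdescent-PGDGD} as a black box at step index $k$ with the current step size $\alpha=\alpha_k$, and then bridge the two Lyapunov functions $\hat{\mathcal L}_{\alpha_k}$ and $\hat{\mathcal L}_{\alpha_{k+1}}$ via the identity
\[
\hat{\mathcal L}_{\alpha_{k+1}}({\bf x}^{k+1}) \;=\; \hat{\mathcal L}_{\alpha_k}({\bf x}^{k+1}) + \tfrac{1}{2}\bigl(\alpha_{k+1}^{-1}-\alpha_k^{-1}\bigr)\|{\bf x}^{k+1}\|_{I-W}^2,
\]
which follows directly from the definition $\hat{\mathcal L}_\alpha = {\bf 1}^{\!T}{\bf f} + \tfrac{1}{2\alpha}\|\cdot\|_{I-W}^2 + r$. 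This is exactly the mechanism used for DGD in Theorem \ref{Thm:Lalphak} (compare \eqref{Eq:Suffdescent-decrease3}), and it is the reason the extra $(\alpha_{k+1}^{-1}-\alpha_k^{-1})\|{\bf x}^{k+1}\|_{I-W}^2$ term appears in the claim.

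In more detail, for case C1 I would rewrite the Prox-DGD update \eqref{Eq:proximalDGD} in the form \eqref{Eq:proximalDGDequiv1} with $\alpha=\alpha_k$, apply the descent lemma to the $L_f+\alpha_k^{-1}(1-\lambda_n(W))$-Lipschitz gradient of ${\mathcal L}_{\alpha_k}$, and absorb the $r$-difference using the subgradient inequality $r({\bf x}^{k+1})-r({\bf x}^k)\le\langle\xi^{k+1},{\bf x}^{k+1}-{\bf x}^k\rangle$ with $\xi^{k+1}\in\partial r({\bf x}^{k+1})$ given by the optimality condition of the prox. This is the verbatim computation already carried out in the proof of Lemma \ref{Lemm:Suffdescent-PGDGD}; it produces
\[
\hat{\mathcal L}_{\alpha_k}({\bf x}^{k+1}) \;\le\; \hat{\mathcal L}_{\alpha_k}({\bf x}^k) - \tfrac{1}{2}\bigl(\alpha_k^{-1}(1+\lambda_n(W))-L_f\bigr)\|{\bf x}^{k+1}-{\bf x}^k\|^2.
\]
Adding the bridging identity above yields \eqref{Eq:Suffdescent-PGDGD-decreasing0}. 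For case C2 I would repeat the same argument but use the nonconvex bound on $r({\bf x}^{k+1})-r({\bf x}^k)$ that comes from comparing the prox objective value at ${\bf x}^{k+1}$ (its minimizer) and at ${\bf x}^k$ (as was done in C2 of Lemma \ref{Lemm:Suffdescent-PGDGD}); this gives the variant with coefficient $\alpha_k^{-1}\lambda_n(W)-L_f$ and therefore \eqref{Eq:Suffdescent-PGDGD-decreasing1}.

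A small bookkeeping point that I would want to flag is the admissibility of the step size at every iteration: the fixed-step lemma assumed $\alpha<(1+\lambda_n(W))/L_f$ in C1 and $\alpha<\lambda_n(W)/L_f$ in C2, so I need $\alpha_k$ to satisfy the corresponding bound for each $k$. Under the choice \eqref{eq:decralpha}, $\alpha_k\le 1/L_f$ for all $k\ge 0$, which trivially satisfies C1 since $1+\lambda_n(W)>0$. For C2, since $\lambda_n(W)>0$ is assumed but may be $<1$, $\alpha_k<\lambda_n(W)/L_f$ holds only from some $k$ onward; this is harmless because one may simply replace the numerator $1$ in \eqref{eq:decralpha} by any positive constant no greater than $\lambda_n(W)$ (a freedom already noted after \eqref{eq:decralpha}) so that admissibility holds from $k=0$. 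Aside from this minor threshold check, the proof is essentially a two-line assembly; I do not anticipate a genuine obstacle, only the need to execute the two descent arguments from Lemma \ref{Lemm:Suffdescent-PGDGD} with $\alpha$ replaced by $\alpha_k$ and then splice in the Lyapunov-function correction.
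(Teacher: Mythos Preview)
Your proposal is correct and follows essentially the same route as the paper: decompose $\hat{\mathcal L}_{\alpha_{k+1}}({\bf x}^{k+1})-\hat{\mathcal L}_{\alpha_k}({\bf x}^k)$ into the bridging term $\tfrac{1}{2}(\alpha_{k+1}^{-1}-\alpha_k^{-1})\|{\bf x}^{k+1}\|_{I-W}^2$ plus $\hat{\mathcal L}_{\alpha_k}({\bf x}^{k+1})-\hat{\mathcal L}_{\alpha_k}({\bf x}^k)$, and bound the latter exactly as in Lemma~\ref{Lemm:Suffdescent-PGDGD} with $\alpha=\alpha_k$. One small remark: your step-size admissibility check is unnecessary here, since the inequalities \eqref{Eq:Suffdescent-PGDGD0}--\eqref{Eq:Suffdescent-PGDGD1} in Lemma~\ref{Lemm:Suffdescent-PGDGD} are derived for any $\alpha>0$ (the upper bounds on $\alpha$ are only invoked afterwards to make the descent coefficient positive), so \eqref{Eq:Suffdescent-PGDGD-decreasing0}--\eqref{Eq:Suffdescent-PGDGD-decreasing1} hold for all $k$ without any threshold.
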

\begin{proof}
The proof of this lemma is similar to that of Lemma \ref{Lemm:Suffdescent-PGDGD} via noting that
\begin{align*}
\hat{\cal L}_{\alpha_{k+1}}({\bf x}^{k+1})
&= \hat{\cal L}_{\alpha_{k}}({\bf x}^{k})+(\hat{\cal L}_{\alpha_{k+1}}({\bf x}^{k+1})-\hat{\cal L}_{\alpha_{k}}({\bf x}^{k+1}))\\
&+(\hat{\cal L}_{\alpha_{k}}({\bf x}^{k+1})-\hat{\cal L}_{\alpha_{k}}({\bf x}^{k})),
\end{align*}
and
\begin{align*}
\hat{\cal L}_{\alpha_{k+1}}({\bf x}^{k+1})-\hat{\cal L}_{\alpha_{k}}({\bf x}^{k+1}) = \frac{1}{2}(\alpha_{k+1}^{-1}-\alpha_k^{-1})\|{\bf x}^{k+1}\|_{I-W}^2.
\end{align*}
While the term $\hat{\cal L}_{\alpha_{k}}({\bf x}^{k+1})-\hat{\cal L}_{\alpha_{k}}({\bf x}^{k})$ can be estimated similarly by the proof of Lemma \ref{Lemm:Suffdescent-PGDGD}.
\end{proof}

\begin{Lemm}
\label{Lemm:distant-hatLalpha}
Let Assumptions \ref{Assump:MixMat}, \ref{Assump:objective-composite} and \ref{Assump:boundedcompositesubgradient} hold. In Prox-DGD, use the step sizes \eqref{eq:decralpha}. If further each $f_i$ and $r_i$ are convex, then for any ${\bf u}\in \mathbb{R}^{n\times p},$ we have
\begin{align*}
\hat{\cal L}_{\alpha_k}({\bf x}^{k+1}) - \hat{\cal L}_{\alpha_k}({\bf u}) \leq \frac{1}{2\alpha_k}(\|{\bf x}^k - {\bf u}\|^2 - \|{\bf x}^{k+1} - {\bf u}\|^2).
\end{align*}
\end{Lemm}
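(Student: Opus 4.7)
My plan is to mimic the classical descent inequality for proximal gradient in the convex setting, treating $\mathrm{Prox\text{-}DGD}$ at iteration $k$ as a proximal-gradient step on $\hat{\mathcal L}_{\alpha_k} = \mathcal L_{\alpha_k} + r$. The key observation is that, under the convexity assumption on each $f_i$ and $r_i$, the smooth part $\mathcal L_{\alpha_k}$ is convex (since $\frac{1}{2\alpha_k}\|\cdot\|_{I-W}^2$ is convex as $I-W\succeq 0$) and has Lipschitz gradient with constant $L^*=L_f+\alpha_k^{-1}(1-\lambda_n(W))$, while $r$ is convex. Since $\lambda_n(W)>0$ is assumed in the setting where this lemma is applied (Theorem~\ref{Thm:hatLalphak}(e)) and $\alpha_k = 1/(L_f(k+1)^\epsilon)$ decays, we have $\alpha_k L^*\le 1$ eventually; as was done in the proof of Proposition~\ref{Propos:convergrate-dgd}, I would simplify by assuming the index range satisfies $\alpha_k\le \lambda_n(W)/L_f$ from the outset (so $L^*\le \alpha_k^{-1}$).

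Step one: combine the descent lemma for $\mathcal L_{\alpha_k}$ with convexity of $\mathcal L_{\alpha_k}$ evaluated at $\mathbf u$ to obtain
\[
\mathcal L_{\alpha_k}({\bf x}^{k+1}) \le \mathcal L_{\alpha_k}({\bf u}) + \langle \nabla\mathcal L_{\alpha_k}({\bf x}^k),\,{\bf x}^{k+1}-{\bf u}\rangle + \tfrac{1}{2\alpha_k}\|{\bf x}^{k+1}-{\bf x}^k\|^2.
\]
Step two: use convexity of $r$ with a subgradient $\xi^{k+1}\in\partial r({\bf x}^{k+1})$ to get
\[
r({\bf x}^{k+1}) \le r({\bf u}) + \langle \xi^{k+1},\,{\bf x}^{k+1}-{\bf u}\rangle.
\]
Step three: invoke the optimality condition of the proximal step, namely $\nabla\mathcal L_{\alpha_k}({\bf x}^k)+\xi^{k+1} = -\alpha_k^{-1}({\bf x}^{k+1}-{\bf x}^k)$ (this $\xi^{k+1}$ being the specific subgradient delivered by the prox, as already noted in Lemma~\ref{Lemm:recursion-xk-prox-dgd}).

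Adding the two inequalities and substituting the optimality condition yields
\[
\hat{\mathcal L}_{\alpha_k}({\bf x}^{k+1})-\hat{\mathcal L}_{\alpha_k}({\bf u}) \le -\tfrac{1}{\alpha_k}\langle {\bf x}^{k+1}-{\bf x}^k,\,{\bf x}^{k+1}-{\bf u}\rangle + \tfrac{1}{2\alpha_k}\|{\bf x}^{k+1}-{\bf x}^k\|^2.
\]
Finishing is then a routine application of the three-point identity $2\langle a,b\rangle = \|a\|^2+\|b\|^2-\|a-b\|^2$ with $a={\bf x}^{k+1}-{\bf x}^k$ and $b={\bf x}^{k+1}-{\bf u}$, which collapses the right-hand side to $\frac{1}{2\alpha_k}(\|{\bf x}^k-{\bf u}\|^2-\|{\bf x}^{k+1}-{\bf u}\|^2)$.

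The only subtle point, and the one I would flag as the main obstacle, is justifying $\alpha_k L^*\le 1$ uniformly. Since $L^* = L_f+\alpha_k^{-1}(1-\lambda_n(W))$, this amounts to $\alpha_k\le \lambda_n(W)/L_f$, which requires $\lambda_n(W)>0$ and either a tail argument ``for $k$ large enough'' or, as the authors do in the proof of Proposition~\ref{Propos:convergrate-dgd}, a convention that the decreasing step size begins sufficiently small. Given the setting of Theorem~\ref{Thm:hatLalphak}(e), this is immediate and requires no new idea beyond what is already present in the paper.
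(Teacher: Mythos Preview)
Your proposal is correct and follows essentially the same approach as the paper: the paper invokes Lemma~\ref{Lemm:cvx-lip} (which packages the descent lemma and convexity of $\mathcal L_{\alpha_k}$ into a single three-point inequality) together with convexity of $r$ and the prox optimality condition, then applies $L^*\le \alpha_k^{-1}$ and the same three-point identity, exactly as you do. Your handling of the condition $\alpha_k L^*\le 1$ via $\lambda_n(W)>0$ also mirrors the paper's convention from the proof of Proposition~\ref{Propos:convergrate-dgd}.
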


\begin{proof}
By Lemma \ref{Lemm:cvx-lip}, we have
\begin{align}
&{\cal L}_{\alpha_k}({\bf u}) \geq {\cal L}_{\alpha_k}({\bf x}^{k+1}) \label{Eq:rate-prox-dgd2}\\
&+ \langle \nabla {\cal L}_{\alpha_k}({\bf x}^k), {\bf u}-{\bf x}^{k+1}\rangle -\frac{L^*}{2} \|{\bf x}^{k+1}-{\bf x}^k\|^2, \nonumber
\end{align}
where $L^* = L_f + \alpha_k^{-1}(1-\lambda_n(W))$, and by the convexity of $r$, we have
\begin{align}
\label{Eq:rate-prox-dgd3}
r({\bf u}) \geq r({\bf x}^{k+1}) + \langle \xi^{k+1}, {\bf u}-{\bf x}^{k+1} \rangle,
\end{align}
where $\xi^{k+1} \in \partial r({\bf x}^{k+1})$ is the one determined by the proximal operator \eqref{eq:prox_def}. By \eqref{Eq:prox-dgd-equiv2}, it follows
\begin{align}
\label{Eq:rate-prox-dgd4}
\xi^{k+1} = \alpha_k^{-1}({\bf x}^k - {\bf x}^{k+1}) -\nabla {\cal L}_{\alpha_k}({\bf x}^k).
\end{align}
Plugging \eqref{Eq:rate-prox-dgd4} into \eqref{Eq:rate-prox-dgd3}, and then summing up \eqref{Eq:rate-prox-dgd2} and \eqref{Eq:rate-prox-dgd3} yield
\begin{align}
&\hat{\cal L}_{\alpha_k}({\bf u}) \geq \hat{\cal L}_{\alpha_k}({\bf x}^{k+1}) \label{Eq:rate-prox-dgd5}\\
&+ \alpha_k^{-1} \langle {\bf x}^k - {\bf x}^{k+1}, {\bf u}-{\bf x}^{k+1}\rangle -\frac{L^*}{2} \|{\bf x}^{k+1}-{\bf x}^k\|^2.\nonumber
\end{align}
Similar to the rest proof of the inequality \eqref{Eq:rate-dgd1}, we can prove this lemma based on \eqref{Eq:rate-prox-dgd5}.
\end{proof}
\begin{proof}[\bf Proof of Theorem \ref{Thm:hatLalphak}]
Based on Lemma \ref{Lemm:Suffdescent-PGDGD-decreasing} and Lemma \ref{Lemm:distant-hatLalpha}, we can proof Theorem \ref{Thm:hatLalphak}.
The proof of Theorem \ref{Thm:hatLalphak}(a)-(d) is similar to that of Theorem \ref{Thm:Lalphak}, where one minor difference is that \eqref{Eq:diff-betak} in the proof of Theorem \ref{Thm:Lalphak} should be
\begin{align}
&|\|{\bar \nabla}{\bf f}({\bf x}^{k+1}) + \bar{\xi}^{k+1}\|^2-\|{\bar \nabla}{\bf f}({\bf x}^k) + \bar{\xi}^{k}\|^2| \nonumber\\
&\leq \|({\bar \nabla}{\bf f}({\bf x}^{k+1}) + \bar{\xi}^{k+1}) -({\bar \nabla}{\bf f}({\bf x}^{k}) + \bar{\xi}^{k})\| \times \nonumber\\
&(\|{\bar \nabla}{\bf f}({\bf x}^{k+1}\|+\|{\bar \nabla}{\bf f}({\bf x}^{k}) + \bar{\xi}^{k}\|) \nonumber\\
&\leq 2\bar{B}\|({\bar \nabla}{\bf f}({\bf x}^{k+1}) + \bar{\xi}^{k+1}) -({\bar \nabla}{\bf f}({\bf x}^{k}) + \bar{\xi}^{k})\| \nonumber\\
&\leq 2\bar{B}\|({\nabla}{\bf f}({\bf x}^{k+1}) + {\xi}^{k+1}) -({\nabla}{\bf f}({\bf x}^{k}) + {\xi}^{k})\| \nonumber\\
&\leq 2\bar{B}(L_f+L_r) \|{\bf x}^{k+1}-{\bf x}^{k}\|,
\end{align}
where $\bar{\xi}^k \triangleq \frac{1}{n}{\bf 1}{\bf 1}^T \xi^k$, and the final inequality holds for the Lipschitz assumption on $\{\xi^k\}$ for large $k$ in Theorem \ref{Thm:hatLalphak}(c).

The proof of Theorem \ref{Thm:hatLalphak}(e) is very similar to that of Proposition \ref{Propos:convergrate-dgd}.
\end{proof}

\section{Conclusion}
\label{sc:conclusion}

In this paper, we study the convergence behavior of the algorithm DGD for smooth, possibly nonconvex consensus optimization. We consider both fixed and decreasing step sizes. When using a fixed step size, we show that the iterates of DGD converge to a stationary point of a Lyapunov function, which approximates to one of the original problem. Moreover, we estimate the bound between each local point and its global average, which is proportional to the step size and inversely proportional to the gap between the largest and the second largest magnitude eigenvalues of the mixing matrix. This motivate us to study the algorithm DGD with decreasing step sizes. When using decreasing step sizes, we show that the iterates of DGD reach consensus asymptotically at a sublinear rate and converge to a stationary point of the original problem. We also estimate the convergence rates of objective sequence in the convex setting using different diminishing step size strategies. Furthermore, we extend these convergence results to Prox-DGD designed for minimizing the sum of a differentiable function and a proximal function. Both functions can be nonconvex. If the proximal function is convex, a larger fixed step size is allowed. These results are obtained by applying both existing and new proof techniques. 

\section*{Acknowledgments}

The work of J. Zeng has been supported in part by the NSF grants (61603162, 11501440) and the Doctoral start-up foundation of Jiangxi Normal University.
The work of W. Yin has been supported in part by the NSF grant ECCS-1462398 and ONR grants N000141410683 and N000141210838.

\ifCLASSOPTIONcaptionsoff
  \newpage
\fi

\end{document}